\documentclass{amsart}
\usepackage{a4wide,enumerate,xcolor}
\usepackage{amsmath,graphicx}
\allowdisplaybreaks

\usepackage[colorlinks=true,linkcolor=blue,
anchorcolor=black,citecolor=cyan,filecolor=black,menucolor=black,
runcolor=black,urlcolor=blue]{hyperref}

\usepackage{bbm}
\usepackage{graphicx}        
\usepackage{multicol}        
\usepackage[bottom]{footmisc}
\usepackage{amsmath, amsfonts, amssymb}
\usepackage{color}
\usepackage{pgf, tikz,fancyvrb}
\usepackage{pgfplots}
\usepackage{soul}
\usepackage{todonotes}

\usepackage{lineno}
\modulolinenumbers[5]


\usepackage{subfig}




\newtheorem{theo}{Theorem}[section]
\newtheorem{prop}[theo]{Proposition}
\newtheorem{cor}[theo]{Corollary}
\newtheorem{lem}[theo]{Lemma}
\newtheorem{rema}[theo]{Remark}
\newtheorem{defi}[theo]{Definition}


\def\be{\begin{equation}}
\def\ee{\end{equation}}
\def\sig{\sigma}

\def\RR{\mathbb{R}}
\def\R{\mathbb{R}}
\def\Rplus{\R_{\geq 0}}
\def\Z{\mathbb{Z}}
\def\diver{\nabla\cdot}
\def\grad{\nabla}
\def\Tt{\mathcal{T}}
\def\Ee{\mathcal{E}}
\def\Hh{\mathcal{H}}
\def\Dd{\mathcal{D}}
\def\Aa{\mathcal{A}}
\def\Bb{\mathcal{B}}
\def\Oo{\mathcal{O}}
\def\d{\mathrm{d}}
\def\O{\Omega}
\def\B{\mathfrak{B}}
\def\M{\mathfrak{M}}
\def\Rf{\mathfrak{R}}

\def\Rr{\mathcal{R}}
\def\Ss{\mathcal{S}}
\def\bmu{\boldsymbol{\mu}}
\def\bxi{\boldsymbol{\xi}}

\pgfplotsset{every axis plot/.append style={line width=1pt}}

\newcommand{\logLogSlopeTriangle}[5]
{
	\pgfplotsextra
	{
		\pgfkeysgetvalue{/pgfplots/xmin}{\xmin}
		\pgfkeysgetvalue{/pgfplots/xmax}{\xmax}
		\pgfkeysgetvalue{/pgfplots/ymin}{\ymin}
		\pgfkeysgetvalue{/pgfplots/ymax}{\ymax}
		
		\pgfmathsetmacro{\xArel}{#1}
		\pgfmathsetmacro{\yArel}{#3}
		\pgfmathsetmacro{\xBrel}{#1-#2}
		\pgfmathsetmacro{\yBrel}{\yArel}
		\pgfmathsetmacro{\xCrel}{\xArel}
		
		\pgfmathsetmacro{\lnxB}{\xmin*(1-(#1-#2))+\xmax*(#1-#2)} 
		\pgfmathsetmacro{\lnxA}{\xmin*(1-#1)+\xmax*#1} 
		\pgfmathsetmacro{\lnyA}{\ymin*(1-#3)+\ymax*#3} 
		\pgfmathsetmacro{\lnyC}{\lnyA-#4*(\lnxA-\lnxB)}
		\pgfmathsetmacro{\yCrel}{\lnyC-\ymin)/(\ymax-\ymin)}
		
		\coordinate (A) at (rel axis cs:\xArel,\yArel);
		\coordinate (B) at (rel axis cs:\xBrel,\yBrel);
		\coordinate (C) at (rel axis cs:\xCrel,\yCrel);
		
		\draw[#5]   (A)-- node[pos=0.5,anchor=north] {\scriptsize{1}}
		(B)--
		(C)-- node[pos=0.,anchor=east] {\scriptsize{#4}} 
		(A);
	}
}

\setlength {\marginparwidth }{2cm}
\pgfplotsset{compat=1.17} 
\begin{document}

\title[Finite volumes for a Poisson--Nernst--Planck system with cross-diffusion]{Convergence and long-time behavior of finite volumes \\for a generalized Poisson--Nernst--Planck system\\ with cross-diffusion and size exclusion}

\author[C. Canc\`es]{Cl\'ement Canc\`es}
\address{Univ. Lille, CNRS, Inria, UMR 8524 - Laboratoire Paul Painlev\'e, F-59000 Lille, France}
\email{clement.cances@inria.fr, maxime.herda@inria.fr}

\author[M. Herda]{Maxime Herda}

\author[A. Massimini]{Annamaria Massimini}
\address{CERMICS, Ecole des Ponts Paris-Tech and Inria MATHERIALS team-project, 6 et 8 av. Blaise Pascal
Cité Descartes 77455 Marne-La-Vallée, France }
\email{annamaria.massimini@enpc.fr}

\date{\today}

\begin{abstract}
We present a finite volume scheme for modeling the diffusion of charged particles, specifically ions, in constrained geometries using a degenerate Poisson--Nernst--Planck system with size exclusion yielding cross-diffusion. Our method utilizes a two-point flux approximation and is part of the exponentially fitted scheme framework. 
The scheme is shown to be thermodynamically consistent, as it ensures the decay of some discrete version of the free energy. 
Classical numerical analysis results -- existence of a discrete solution, convergence of the scheme as the grid size and the time step go to $0$ --  follow. We also investigate the long-time behavior of the scheme, both from a theoretical and numerical point of view. 
Numerical simulations confirm our findings, but also point out some possibly very slow convergence towards equilibrium of the system under consideration. 

\end{abstract}

\keywords{Drift-diffusion, cross-diffusion, exponential fitting, free energy decay.}

\subjclass[2010]{65M08, 65M12, 35K51.}

\maketitle

\section{Presentation of the problem}
\subsection{The continuous generalized Poisson--Nernst--Planck model }
Motivated by the transfer of ions in confined geometries, Burger {\em et al.} introduced in  \cite{BDPS10} a model accounting for cross-diffusion and size-exclusion effects. The model \cite{BSW12} further incorporated self-consistent electric interaction. 
In this model, $I$ species, the volume fractions of which being denoted by $U = (u_i)_{1\leq i\leq I}$, are subject to diffusion as well as to electric forces induced by a self-consistent electrostatic potential.
Denote by $\O\subset \RR^d$ a bounded connected polyhedral domain, then the conservation of the volume occupied by the species $i$ is written
\begin{equation}\label{eq:contmodel_conserv}
\partial_t u_i + \diver F_i = 0, \qquad i =1,\dots,I, 
\end{equation}
with the flux of the species $i$ being (formally) given by
\begin{equation}\label{eq:Fi}
F_i = - D_i \left(u_0 \grad u_i - u_i \grad u_0 + u_0u_i z_i \grad \phi\right) =  - D_i u_i u_0 \nabla\left(\log\left(\frac{u_i}{u_0}\right)+ z_i\phi\right).
\end{equation}
In the above expression, $D_i>0$ denotes the diffusion coefficient of the species $i$.
The quantity 
\begin{equation}\label{eq:contmodel_compat}
u_0 = 1-\sum_{i=1}^I u_i
\end{equation} 
shall be thought of as the volume fraction of available space for the ions, possibly occupied by a motile and electro-neutral solvent. The quantity $u_0$ is then required to remain non-negative, leading to size exclusion for the other species $u_i$, $i = 1,\dots,I$. Denoting by $z_i \in \Z$ the charge of species $i$ and by $\lambda>0$ the (scaled) Debye length, then the electrostatic potential $\phi$ solves the Poisson equation
\begin{equation}\label{eq:contmodel_Poisson}
-\lambda^2\Delta\phi = \sum_{i=1}^I z_iu_i + f
\end{equation}
for some prescribed {background charge density} $f$.
We consider boundary conditions of mixed type for the electric potential. More precisely, we assume that the boundary $\partial \O$ of the domain can be split into an insulator part $\Gamma^N$, which is a relatively open $C^1$ part of the boundary, and its complement $\Gamma^D$ on which a Dirichlet boundary condition is imposed:
\begin{equation}\label{eq:BC.phi}
\grad \phi \cdot n = 0 \quad \text{on}\; \Gamma^N \qquad \text{and}\quad 
\phi = \phi^D\;\text{on}\; \Gamma^D,
\end{equation}
where $n$ is the unit normal to $\partial \O$.
Throughout this paper, we will assume that $f \in L^\infty(\O)$ and that {$\phi^D$ is the trace of an $L^\infty\cap H^{1}(\O)$ function (which we also denote by $\phi^D$).
 Neither $f$ nor $\phi^D$ depend on time}. 
Boundary conditions of various types can be considered for the conservation laws \eqref{eq:contmodel_conserv}--\eqref{eq:Fi}, for instance a Robin type boundary condition modeling electrochemical reaction thanks to a Butler--Volmer type formula, see for instance~\cite{CCMRV23}, or boundary conditions of mixed Dirichlet--Neumann type as in~\cite{GJ18}. 
In the presentation of the scheme, we assume for simplicity that the system is isolated, in the sense that 
\begin{equation}\label{eq:BC.ui}
F_i \cdot n = 0 \quad \text{on}\; \partial \O, \qquad i = 1,\dots, I. 
\end{equation}
The system is finally complemented with initial conditions $u_i(t=0) = u_i^0$ with
\begin{equation}\label{eq:init}
\text{
$u_i^0\geq 0$ \quad \text{and}\quad $\int_\O u_i^0 >0$\quad for\quad $i=0,\dots,I$ \quad and \quad$\sum_{i=0}^I u_i^0 = 1$.
}
\end{equation}
We then denote by 
\[
\Aa = \left\{ U = \left( u_i \right)_{1 \leq i \leq I}\in (\Rplus)^I \; \middle| \; \sum_{i=1}^I u_i \leq 1\right\}
\]
the set in which the volume fractions have to take their values. 
\subsection{Entropy structure of the model}

Let us now describe the entropy (or formal gradient flow) structure of the model. 
Introduce the Slotboom variables $w_i = \frac{u_i}{u_0}e^{z_i\phi}$, then the fluxes~\eqref{eq:Fi} rewrite as 
\be\label{eq:Fi.Slotboom}
F_i = - D_i u_0^2 e^{-z_i \phi} \grad w_i, \qquad i = 1,\dots, I. 
\ee
If the so-called electrochemical potentials defined by $\mu_i = \log w_i = \log \frac{u_i}{u_0} + z_i \phi$ are regular enough, say in $H^1\cap L^\infty(\O)$, then 
\(
\grad \log w_i \cdot \grad w_i = 4 | \grad \sqrt{w_i}|^2.
\)
Therefore, multiplying~\eqref{eq:contmodel_conserv} by $\mu_i$,
 integrating over $\O$ and summing over $i = 1,\dots, I$ then yields 
\be\label{eq:EDE}
\frac{\d}{\d t} \Hh + 4 \int_\O \sum_{i = 1}^I D_i u_0^2 e^{-z_i \phi} | \grad \sqrt{w_i} |^2 = 0,  
\ee
where, denoting the mixing (neg)entropy density function $H: \Aa \to [-\log(I+1),+\infty)$ by 
\be\label{eq:H}
H(U) =  u_0 \log(u_0) + \sum_{i=1}^I u_i \log(u_i), 
\ee
with $u_0$ seen as a function of $U$, the free energy $\Hh$ is given by 
\be\label{eq:H_tot}
\Hh = \int_\O H(U) + \frac{\lambda^2}2  \int_\O |\grad \phi|^2 - \lambda^2 \int_{\Gamma^D}\phi^D \grad \phi\cdot n.
\ee
Assume that the $u_i$ are positive for $i = 0,\dots,I$ (as proved in the discrete case later on), then the second term in \eqref{eq:EDE} is well-defined and non-negative. 
As a consequence, the free energy decays along-time, as a manifestation of the second principle of thermodynamics. {Observe that $\Hh$ need not be non-negative but may be bounded uniformly from below by a constant depending only on $\lambda$, $f$, and $\phi^D$.
}

\subsection{Weak solution}\label{ssec:weak}
As we aim to prove rigorously the convergence of the scheme to be presented in Section~\ref{sec:schemes}, we need a proper notion of solution for the continuous problem~\eqref{eq:contmodel_conserv}--\eqref{eq:init}.
The volume fractions we seek are such that 
\be\label{eq:cont.ui.Linf}
\text{$0 \leq u_i \leq 1$ for $i=0,\dots,I$ and a.e. $(t,x) \in \Rplus \times \O$.}
 \ee
The electric potential $\phi$ then solves the Poisson equation~\eqref{eq:contmodel_Poisson} with a bounded right-hand side and boundary condition, so it satisfies
\be\label{eq:cont.phi.Linf+H1}
\| \phi \|_{L^{\infty}(\Rplus \times \O)} + \| \phi -\phi^D\|_{L^{\infty}(\Rplus; V)}\leq C(\lambda, (z_i)_i, \O, f), 
\ee
where 
\[
V = \left\{ v \in H^1(\O) \; \middle| \; v_{|_{\Gamma^D}} = 0\right\}
\]
is equipped with the $H^1(\O)$ semi-norm $\| \grad v \|_{L^2}$, which is a norm thanks to the Poincaré inequality. 
We have emphasized in the right-hand side of~\eqref{eq:cont.phi.Linf+H1} the dependence of the bound on the data, especially on the Debye length $\lambda$.
The estimate~\eqref{eq:EDE} is the other main estimate on which our study will rely. It provides enough control 
to define a proper notion of weak solutions, but at the price of a suitable reformulation of the fluxes~\eqref{eq:Fi}. 
Let us first remark that, since the free energy $\Hh$ is bounded (see Lemma~\ref{lem:boundentropy}), integrating
\eqref{eq:EDE} over $t \in \Rplus$ provides 
\be\label{eq:cont.dissip}
 \iint_{\Rplus\times\O} \sum_{i = 1}^I D_i u_0^2 e^{-z_i \phi} | \grad \sqrt{w_i} |^2 \leq C. 
\ee
As already noticed in \cite{GJ18}, the above inequality yields a control in $L^2_{\rm loc}(\Rplus; H^1(\O))$ on $\sqrt{u_0}$ and on $u_0$ itself, as well as control on some product terms. More precisely, in the numerical analysis exposed hereafter, we derive a $L^2_{\rm loc}(\Rplus; H^1(\O))$ estimate on $u_i\sqrt{u_0}$, which, together with~\eqref{eq:cont.ui.Linf} provides a $L^2_{\rm loc}(\Rplus; H^1(\O))$ control on $u_i u_0$ too. Hence, all the terms in the following expression of the fluxes
\be\label{eq:Fi.3}
F_i = - D_i \left( \grad (u_0 u_i) - 4 u_i \sqrt{u_0}  \grad \sqrt{u_0} + u_i u_0 z_i \grad \phi \right), \qquad 1 \leq i \leq I, 
\ee
have a clear mathematical sense, motivating the following notion of weak solution. 

\begin{defi}[Weak solution]\label{def:weak}
$(U,\phi)$ is said to be a weak solution to~\eqref{eq:contmodel_conserv}--\eqref{eq:init} if 
\begin{itemize}
\item $u_i \in L^\infty(\Rplus\times\O)$ with $U \in \Aa$ a.e. in $\Rplus\times\O$, with moreover $\sqrt{u_0}$ and $u_i\sqrt{u_0}$ belonging to $L^2_{\rm loc}(\Rplus; H^1(\O))$ for $u_0$ defined as in~\eqref{eq:contmodel_compat}; 
\item $\phi \in L^\infty(\Rplus\times \O)$ with $\phi - \phi^D \in L^{\infty}(\Rplus; V)$ satisfies 
\be\label{eq:weak.phi}
\int_\O \lambda^2 \grad \phi(t,x) \cdot \grad \psi(x) \d x = \int_\O \left( \sum_{i=1}^I z_i u_i(t,x) + f(x) \right) \psi(x) \d x
\ee
for all $\psi \in C^1_c(\O \cup \Gamma^\mathrm{N})$ and a.e. $t \ge 0$; 
\item for all $\varphi \in C^1_c(\Rplus \times \overline \O)$ and all $i=1,\dots, I$, there holds 
\be\label{eq:weak.ui}
\iint_{\Rplus \times \O} u_i \partial_t \varphi + \int_\O u_i^0 \varphi(0,\cdot) - \iint_{\Rplus \times \O}  D_i
\big( \grad (u_0 u_i) - 4 u_i \sqrt{u_0} \grad \sqrt{u_0} + u_i u_0 z_i \grad \phi  \big) \cdot \grad \varphi = 0. 
\ee
\end{itemize}
\end{defi}

\subsection{Equilibrium and modified Poisson--Boltzmann equation}\label{ssec:equilibrium}
As the model has a gradient flow structure and is equipped with no-flux boundary conditions, its solution converges 
towards a steady state $(u^\infty, \phi^\infty)$ which correspond to vanishing fluxes $F_i^\infty$ for all $i=1,\dots, I$. 
It is suggested in \cite{BSW12} that the long-time limit profile $(u^\infty, \phi^\infty)$ should correspond to constant in 
space electrochemical potentials $\boldsymbol{\mu}^\infty = \left(\mu_i^\infty\right)_{1\leq i \leq I}$. Define the smooth functions $v_i: \R \times \R^I \to (0,1)$ by 
\be\label{eq:vi}
v_i\left(y, \boldsymbol{\xi} \right) =  \frac{e^{\xi_i - z_i y}}{1+ \sum_{j=1}^I e^{\xi_j - z_j y}} \quad \text{for $1\leq i \leq I$}, 
\quad \text{and} \quad v_0\left(y, \boldsymbol{\xi} \right) = 1- \sum_{j=1}^I v_j(y,\boldsymbol{\xi}), 
\ee
where $\boldsymbol{\xi} = \left(\xi_j\right)_{1 \leq j \leq I} \in \R^I$.
Then the volume fractions $U^\infty = \left(u_i^\infty\right)_{1\leq i \leq I}$ satisfy 
\be\label{eq:cont.uiinf}
u_i^\infty(x) = v_i(\phi^\infty(x),\boldsymbol{\mu}^\infty), 
\qquad i = 0,\dots, I, \; 
x \in \O.
\ee
Plugging these expressions in the Poisson equation~\eqref{eq:contmodel_Poisson} provides what we call the \textit{modified Poisson--Boltzmann equation}, i.e. 
\be\label{eq:cont.PoissonBoltzmann}
- \lambda^2 \Delta \phi^\infty + r(\phi^\infty, \boldsymbol{\mu}^\infty) = f \quad \text{in}\; \O, 
\ee
complemented with the same boundary conditions~\eqref{eq:BC.phi}. In~\eqref{eq:cont.PoissonBoltzmann}, $r$ is the smooth function from $\R \times \R^I$ to $\R$ defined by 
\be\label{eq:r}
r(y, \boldsymbol{\xi}) = -  \frac{ \sum_{i=1}^I z_i e^{\xi_i - z_i y }}{1+ \sum_{i=1}^I e^{\xi_i - z_i y }} = - \sum_{i=1}^I z_i v_i(y,\bxi). 
\ee
To close the system, we still need to determine the real constants $\boldsymbol{\mu}^\infty$. Their values are calculated to guarantee the conservation-of-total-mass constraints 
\be\label{eq:cont.steadymass}
\int_\O u_i^\infty = \int_\O u_i^0.
\ee
By setting $z_0 = 0$, then one readily checks that since the $(v_i)$ by convexity 
\[
\partial_y r(y, \boldsymbol{\xi}) = \sum_{i=0}^I z_i^2 v_i(y, \boldsymbol{\xi}) -  \left( \sum_{i=0}^I z_i v_i(y, \boldsymbol{\xi}) \right)^2 \geq 0, 
\]
the last inequality being a consequence of the convexity of $y \mapsto y^2$ as $ \sum_{i=0}^I v_i(y, \boldsymbol{\xi}) =1$. 
Therefore, $r$ is non-decreasing w.r.t.~its first variable $y$. The monotonicity and boundedness of $r$ allow us to conclude, via classical monotonicity arguments, the well-posedness of the problem~\eqref{eq:cont.PoissonBoltzmann}\&\eqref{eq:BC.phi}. 
Going one step further allows us to check that the solution $(\phi^\infty, \boldsymbol \mu^\infty)$ is 
a minimizer of the functional $\Psi: (\phi^D + V) \times \R^I \to \R$ defined by 
\be\label{eq:steady.Psi}
\Psi(y,\boldsymbol{\xi}) =\int_\O  \left[ \frac{\lambda^2}2 |\grad y|^2 + \log\left( 1+ \sum_{i=1}^I e^{\xi_i - z_i y} \right) - f y - \sum_{i=1}^I \xi_i u_i^0 \right].
\ee
One readily checks that $\Psi$ is strictly convex, and thus the minimiser $(\phi^\infty, \boldsymbol \mu^\infty)$ is unique.  
Moreover, one easily verifies that minimizers for $\Psi$ are steady states of the system~\eqref{eq:contmodel_conserv}--\eqref{eq:init}. 
Despite hints in these directions, the uniqueness of the steady state is neither established in~\cite{BSW12}, nor, to our knowledge, in more recent contributions. Furthermore, no quantitative 
convergence result has been established so far. Non-quantitative results were however recently presented in~\cite{CJLL24} in the absence of 
electrical interactions, i.e. $z_i=0$ for all $i$.
We partially fill this gap in the discrete case. We rigorously establish in Section~\ref{sec:asymptotic} the uniqueness of the steady states, which are minimisers of some discrete counterpart of the functional $\Psi$,  cf.~\eqref{eq:PsiTt}, and the (non-quantitative) convergence towards such a steady profile as time goes to $+\infty$.

\subsection{Goal and positioning of the paper} 
{The goal of this paper is to provide a developed mathematical study of a numerical scheme that was proposed in our previous contribution~\cite{CHM_FVCA}. It enters the broad family of the two-point flux approximation (TPFA) finite volume schemes~\cite{Eymard2000}, which has been applied to several cross-diffusion problems in recent years (see e.g. \cite{CG20, CFS20, JZ21, GF22, CZ22, JZ23, CEM24}). Our scheme mixes ideas from the so-called square-root approximation (SQRA) finite volume scheme~\cite{CV23} (see also~\cite{LFW13, Heida18}) with some features of the Scharfetter--Gummel scheme \cite{SG69, Chatard_FVCA6}, especially the use of the Bernoulli function~\eqref{eq:B.SG} in the definition of the fluxes. Note that a purely SQRA approach was also proposed in~\cite{CHM_FVCA}, but since it is overpassed by the mixed approach both in terms of accuracy and of robustness in the small Debye length (or quasi-neutral) limit, we do not develop the analysis here. Let us however stress that our whole study extends to this full-SQRA scheme at the price of very minor modifications sketched in~\cite{CHM_FVCA}.}

{As highlighted in~\cite{CHM_FVCA}, our scheme fulfils key properties. First, it is second order accurate in space, in opposition to the scheme proposed in~\cite{CCGJ19} in which upwinding leads to a mere first order consistency. Even though our scheme does not use the so-called entropy variables (the electrochemical potentials in our framework) as unknowns in opposition to the finite element scheme presented in~\cite{GJ19}, our scheme satisfies a discrete energy dissipation inequality, which should be thought of as a discrete counterpart to~\eqref{eq:EDE}, and which is key for its numerical analysis. Keeping conservative variables as primary variables yields well-behaved non-linear solvers (we used Newton--Raphson), allowing us to consider larger time steps than for the finite element methods of~\cite{GJ19} or for the finite volume method of~\cite{BCH23}. Together with the stability to be used for proving the convergence of the scheme, the discrete energy dissipation inequality implies that the scheme preserves exactly the steady states of the continuous model in the sense that, at the discrete level, fluxes vanish when concentrations are of the form~\eqref{eq:cont.uiinf}. The unique steady state of our scheme is
then characterized only by the electric potential which solves a discrete counterpart of the
Poisson--Boltzmann equation~\eqref{eq:cont.PoissonBoltzmann}, which can be computed directly in an efficient way. }

{In the present contribution, we first rigorously establish the convergence of the scheme when the discretization parameters (mesh size and time step) tend to $0$. As in the continuous setting~\cite{GJ18}, the proof is based on compactness arguments. It strongly relies on the discrete counterpart of the energy dissipation estimate~\eqref{eq:EDE}, following the boundedness-by-entropy method~\cite{J15}, and more precisely a discrete version of it~\cite{CH14_FVCA7, JZ23}. Uniqueness of the weak solutions is moreover established in~\cite{GJ18} under moderate regularity assumptions thanks to the uniqueness technique of Gajewski~\cite{G94, GS03}.}

{
We also investigate the long-time behaviour of the numerical scheme. We show in particular  that the discrete solution to our scheme for the evolution problem converges as time goes to infinity towards the unique solution to the discrete steady problem. As the proof relies on LaSalle's invariance principle, no quantitative convergence rate is derived, similarly to what is proposed in~\cite{CJLL24} in the continuous setting but in the absence of electrical interactions. By establishing this result, we rigorously establish in the discrete setting the long-time behavior conjectured in~\cite{BSW12} and recalled in Section~\ref{ssec:equilibrium}}. Numerical simulations suggest that the convergence rate strongly depends on the Debye length, but also on the initial profile $U^0$ (in opposition to the classical Poisson--Nernst--Planck problem without volume filling~\cite{BCCH17}).

\section{The finite volume scheme and main results}\label{sec:schemes}
First, we introduce the time discretization and the spatial mesh of the domain $\O$.

\subsection{Space and time discretizations} 
\label{sec:space_time_discr}
The mesh will be assumed to be admissible in the sense of \cite{Eymard2000}, namely it fulfils the so-called {\em orthogonality condition}, which is usual for two-point flux approximation
finite volumes.

    Let $\mathcal{T}$ denote a family of non-empty, disjoint, convex, open, and polygonal \textit{control volumes} $K \in \mathcal{T}$, whose Lebesgue measure is denoted by $m_K$. 
    We also assume that control volumes partition the domain, meaning that $\overline{\O} = \bigcup_{K \in \mathcal{T}} \overline{K}$. 
    Further, we call $\mathcal{E}$ a \textit{family of edges/faces}, where each $\sigma \in \mathcal{E}$ is a closed subset of $\overline{\O}$ contained in a hyperplane of $\mathbb{R}^d$. Each $\sigma$ has a strictly positive $(d-1)$-dimensional Hausdorff (or Lebesgue) measure, denoted by $m_\sigma$.  We use the abbreviation $K | L = \partial K \cap \partial L$ for the intersection between two distinct control volumes which is either empty or reduces to a face contained in $\mathcal{E}$. The subset of all interior faces is denoted by 
    \[\mathcal{E}_{\rm int} = \left\{\sigma\in\mathcal{E} \; \middle| \;\sigma = K|L\text{ for some }K,L\in\mathcal{T}\right\}.\] 
For any $K \in \mathcal{T}$, we assume that there exists a subset $ \mathcal{E}_K$ of distinct elements of $\mathcal{E}$ such that the boundary of a control volume can be described by $\partial K = \bigcup_{\sigma \in \mathcal{E}_K} \sigma$ and, consequently, $\mathcal{E} = \bigcup_{K \in \mathcal{T}} \mathcal{E}_K$. Additionally, we assume that boundary edges $\Ee_\text{ext} = \Ee \setminus \Ee_{\rm int}$ 
are either subsets of $\Gamma^D$ or $\Gamma^N$. We denote by $\Ee^D, \Ee^N \subset \Ee$ the sets of boundary edges lying on $\Gamma^D$ and $\Gamma^N$, respectively.
    To each control volume $K \in \mathcal{T}$ we assign a \textit{cell center} ${x}_K \in K$ which satisfies the \textit{orthogonality condition}: If $K, L$ share a face $\sigma =K | L$, then the vector {\color{black}$x_L - x_K$ is orthogonal to $\sigma = K | L$}.
    The triplet $\left( \mathcal{T}, \mathcal{E}, \lbrace{x}_K\rbrace_{K \in \mathcal{T}}\right)$ is called an \emph{admissible mesh}.

   A classical way to construct such admissible mesh is to place some seeds $(x_K)_{K\in\Tt}$ in the domain $\O$ and to define 
   the corresponding Vorono\"\i\; tessellation. As in practice the condition $x_K \in K$ can be relaxed, but sill with the constraint $x_L-x_K$ points outward with respect to $K$, another popular choice to build admissible meshes from a simplicial mesh satisfying the Delaunay condition is to choose $x_K$ as the center of the circumcircle of the simplex $K$.

    We introduce the notation $d_\sigma$ for the Euclidean distance between ${x}_K$ and ${x}_L$ if $\sigma= K|L$ or between ${x}_K$ and the affine hyperplane spanned by $\sigma$ if $\sigma\subset\partial{\O}$. We also denote by $d_{K\sig} = \operatorname{dist}(x_K,\sig)$, so that $d_\sig = d_{K\sig}+d_{L\sig}$ if $\sig = K|L\in \Ee_{\rm int}$ and $d_\sig = d_{K\sig}$ if $\sig \in \Ee_K \cap \Ee_\text{ext}$.
The {\em transmittivity} of the edge $\sig \in \Ee$ is defined by ${a_\sig = \frac{m_\sig}{d_\sig}}$.
The size of the mesh is
 \[h_\Tt = \max_{K\in\mathcal{T}}\operatorname{diam}(K)\] 
where $\operatorname{diam}(K)$ denotes the diameter of the cell $K$. The regularity of the mesh is defined by 
\[
\zeta_\Tt = \max_{K\in\Tt} \left( \operatorname{card}\, \Ee_K \; ; \; \max_{\sig \in \Ee_K} \frac{\operatorname{diam}(K)}{d_{K\sig}}
\right).
\]

    {For the time discretization, we decompose the time interval $\mathbb{R}_{\geq 0}:=[0, +\infty)$ into an unbounded increasing sequence $\left(t^n\right)_{n\geq 0}$ with $t^0 = 0$ and possibly non-uniform time steps
    \[\tau^n = t^{n} - t^{n-1} >0, \qquad n >0. \] 
    We finally introduce $\Delta t = \sup_{n \in\mathbb{N}\setminus\{0\}}\tau^n$, which we assume to be finite.} As we consider the unbounded time interval $\Rplus$,  we furthermore have to assume that 
    \(
    \inf_{n\geq 1} \tau^n >0. 
    \)

\subsection{The finite volume schemes}
We are now in position to define the finite volume scheme.    
Let us start with the discretization of the Poisson equation~\eqref{eq:contmodel_Poisson}--\eqref{eq:BC.phi}, which relies on a classical two-point flux approximation
\be\label{eq:scheme.Poisson}
\lambda^2 \sum_{\sig \in \Ee_K} a_\sig( \phi_K^n - \phi_{K\sig}^n) = m_K\left( f_K + \sum_{i=1}^I z_i u_{i,K}^n \right), \qquad K\in\Tt,
\ee
where $f_K$ is (possibly an approximation of) the mean value of $f$ on the cell $K$, and where 
\[
\phi_{K\sig}^n = \begin{cases}
\phi_L^n & \text{if}\; \sig = K|L \in \Ee_{\rm int}, \\
{\phi_K^n} & {\text{if}\; \sig \subset \Gamma^N,}\\  
\phi_\sig^D = \frac1{m_\sig}\int_\sig \phi^D & \text{if}\; \sig \subset \Gamma^D.  
\end{cases}
\]    
The equation~\eqref{eq:contmodel_conserv} is discretized using a backward Euler method in time and finite volumes in space, leading to
    \begin{equation}\label{eq:fvscheme_conserv}
    \frac{u_{i,K}^{n} - u_{i,K}^{n-1}}{\tau^n}m_K + \sum_{\sigma\in\Ee_K} F_{i,K\sigma}^n = 0, \quad i = 1,\dots, I, \quad K \in \Tt.
    \end{equation}
In accordance with~\eqref{eq:BC.ui}, we set $F_{i,K\sig}^n = 0$ if $\sig \subset \partial\O$. For $\sig = K|L$ an internal edge, then we define 
    \begin{equation}\label{eq:scheme.Fi}
    F_{i,K\sigma}^n = a_\sig D_i \left(u^n_{i,K}u^{n}_{0,L} \B\left(z_i(\phi_L^n - \phi_K^n)\right)
   - u^n_{i,L}u^{n}_{0,K}\B\left(z_i(\phi^n_{K}-\phi^n_{L})\right)\right), 
    \end{equation}
    with 
    \begin{equation}\label{eq:u0Kn}
    u_{0,K}^n = 1 - \sum_{i = 1}^I u_{i,K}^n, \qquad K \in \Tt. 
    \end{equation}
Formula~\eqref{eq:scheme.Fi} involves a function $\B\in C^1(\R;\R)$ which is (strictly) positive and satisfies $\B(0) = 1$ and $\B'(0) = -1/2$. The SQRA scheme studied in~\cite{CHM_FVCA} corresponds to the choice $\B(y) = e^{-y/2}$. Even if our analysis extends to this case without any particular difficulty, for the sake of readability we will focus here on the scheme referred to as the SG scheme in~\cite{CHM_FVCA}, for which $\B$ is the Bernoulli function
\be\label{eq:B.SG}
\B(y) = \frac{y}{e^y-1}, 
\ee
and which shows a better behavior (accuracy and robustness) with respect to the SQRA scheme. We refer to~\cite{CHM_FVCA} for a comparison of the two approaches.  

    We would like to emphasize that the construction of the scheme which we will refer to as the SG scheme is not based on the original idea of \cite{SG69}. Rather, we take advantage of the free-energy diminishing character of the SG scheme highlighted in \cite{Chatard_FVCA6} and exploited in \cite{BCV14, SS22}.

In order to close the system, it remains to define the discrete counterpart of $u^0$ as follows:
\be\label{eq:uiK0}
u_{i,K}^0 = \frac1{m_K} \int_K u_i^0, \qquad K\in\Tt, \; i = 0,\dots, I. 
\ee
Then we infer from~\eqref{eq:init} that 
\be\label{eq:uiK0.2}
\sum_{i=0}^I u_{i,K}^0 = 1 \; \text{for all $K\in\Tt$}, \; \text{and}\; \sum_{K\in\Tt} u_{i,K}^0 m_K = \int_\O u_{i}^0 >0 \quad \text{for $i=0,\dots, I$}. 
\ee
In what follows, we denote by $U_K^n = \left(u_{i,K}^n\right)_{i=1,\dots,I}$ for $K\in\Tt$ and $n \geq0$.

The consistency of the discrete fluxes \eqref{eq:scheme.Fi} with the continuous ones \eqref{eq:Fi} might not look completely obvious. 
It follows from the identity
\be\label{eq:scheme.Fi.2}
F_{i,K\sigma}^n = a_\sig\, D_i\, u_{0,K}^n \,u_{0,L}^n \M(e^{-z_i \phi_K^n}, e^{-z_i \phi_L^n})\left(w_{i,K}^n - w_{i,L}^n\right)
\ee
where $w^n_{i,K} = \frac{u^n_{i,K}}{u^n_{0,K}}e^{z_i\phi^n_K}$ are the discrete counterpart of the Slotboom variables and, by assumption, $U^n_K \in (0,1)^I$. Later, in Proposition~\ref{prop:positivity}, this bound will be rigorously established.
The mean function $\M$ appearing in \eqref{eq:scheme.Fi.2} is defined by 
\[
\M(a,b) = \frac{\log(1/a) - \log(1/b)}{1/a - 1/b} \quad \text{for $a,b >0$ with $a\neq b$, and $\M(a,a) = a$ for $a>0$}. 
\]
We refer to~\cite{HKS21} for an extensive discussion on the influence of the choice of the Stolarsky mean $\M$ on the scheme behavior. The consistency of the formula~\eqref{eq:scheme.Fi.2} with the expression \eqref{eq:Fi.Slotboom} of the continuous flux is now clear, and assuming the existence of a regular solution to the continuous problem, inserting it in the scheme and performing Taylor expansions as in~\cite{CV23} shows the second order accuracy in space of our scheme on uniform Cartesian grids. 

\subsection{Main results and organization of the paper}

We provide here simple presentations of our main results, which will be detailed and proven in the next sections. 
As already mentioned, the goals of this contribution are twofold. It first aims to complement the elements of analysis sketched in the contribution~\cite{CHM_FVCA}, by showing existence of discrete solutions and convergence of the scheme. Secondly it intends to investigate the long-time behavior of the scheme.

Our first main result concerns the well-posedness of the scheme.
\begin{theo}\label{theo:main.1}
Given an admissible mesh $(\Tt, \Ee, (x_K)_{K\in\Tt})$ of $\O$ and a sequence of time steps $\left(\tau^n\right)_{n\geq 0}$ as in Section~\ref{sec:space_time_discr}, then there exists (at least) one solution to the scheme~\eqref{eq:scheme.Poisson}--\eqref{eq:uiK0} which satisfies 
\[
0<u_{i,K}^n<1  \qquad \forall i = 0, \dots, I, \; K \in \Tt, \; n \geq 1. 
\]
Moreover, the discrete free energy $\Hh_\Tt^n$ defined later on in~\eqref{eq:free_energ_discr} is decaying along the time iterations
\be\label{eq:disc.EDI}
\Hh_\Tt^n + \tau^n \Dd_\Tt^n  \leq \Hh_\Tt^{n-1}, \qquad n \geq 1, 
\ee
for some dissipation rate $\Dd_\Tt^n\geq 0$ vanishing if and only if 
$\left(\left(U_K^n\right)_{K\in\Tt},\left(\phi_K^n\right)_{K\in\Tt}\right)$ is the stationary solution to the scheme. 
\end{theo}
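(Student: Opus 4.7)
The plan is to prove the three assertions (existence, strict positivity, energy dissipation together with the equilibrium characterization) in a coupled fashion: the key tool, an a~priori discrete energy dissipation identity, simultaneously provides uniform estimates from which strict positivity follows, which in turn allows a topological degree argument to produce existence. I proceed time step by time step and work under the induction hypothesis that $\Hh_\Tt^{n-1}$ is finite (true for $n=1$ by \eqref{eq:uiK0.2}).

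\smallskip

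At each time step, note first that \eqref{eq:scheme.Poisson} is a symmetric positive definite linear system for $(\phi_K^n)_K$ (the Dirichlet contribution from $\Gamma^D$ ensures invertibility), so $\phi^n$ is a smooth affine function of $U^n$ that can be eliminated, reducing the scheme to a nonlinear system in $U^n \in (\Aa)^{\#\Tt}$. To derive the energy inequality \eqref{eq:disc.EDI}, I would formally multiply \eqref{eq:fvscheme_conserv} by $\tau^n \mu_{i,K}^n := \tau^n \bigl(\log(u_{i,K}^n/u_{0,K}^n) + z_i \phi_K^n\bigr) = \tau^n \log w_{i,K}^n$ and sum over $K\in\Tt$ and $i=1,\dots,I$. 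Convexity of $H$ handles the time-increment term,
\be
\sum_{i,K}\mu_{i,K}^n(u_{i,K}^n-u_{i,K}^{n-1})m_K \ge \sum_K\bigl[H(U_K^n)-H(U_K^{n-1})\bigr]m_K + \sum_{i,K} z_i\phi_K^n(u_{i,K}^n-u_{i,K}^{n-1})m_K,
\ee
while the flux part, via the Slotboom rewriting \eqref{eq:scheme.Fi.2} and discrete integration by parts, yields
\be
\tau^n\!\sum_{i=1}^I\sum_{\sig=K|L\in\Ee_{\rm int}} a_\sig D_i\, u_{0,K}^n u_{0,L}^n\, \M(e^{-z_i\phi_K^n},e^{-z_i\phi_L^n})(w_{i,K}^n-w_{i,L}^n)(\log w_{i,K}^n-\log w_{i,L}^n)\ge 0,
\ee
since $\log$ is increasing. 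Finally, the potential contribution $\sum_{i,K}z_i\phi_K^n(u_{i,K}^n-u_{i,K}^{n-1})m_K$ is combined with the electrostatic part of $\Hh_\Tt^n$ by using the discrete Poisson equation \eqref{eq:scheme.Poisson}, a further discrete integration by parts, and the identity $a^2-b^2=2b(a-b)+(a-b)^2$; this reconstructs the electrostatic energy increment plus a non-negative remainder $\tfrac{\lambda^2}{2}\sum_\sig a_\sig(\delta\phi^n-\delta\phi^{n-1})^2$, which is absorbed in $\Dd_\Tt^n$. Altogether \eqref{eq:disc.EDI} follows.

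\smallskip

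From \eqref{eq:disc.EDI} and the lower bound for $\Hh_\Tt^n$ (Lemma~\ref{lem:boundentropy}), any solution satisfies $\sum_K H(U_K^n)m_K \le \Hh_\Tt^0 + C(\lambda,f,\phi^D)$. Since $H$ blows up whenever one of the components of $U$ approaches $0$ and $m_K>0$, there exists a quantitative $\epsilon>0$ depending only on the initial data, the mesh and $\lambda,f,\phi^D$ such that any solution satisfies $u_{i,K}^n\ge \epsilon$ for every $i=0,\dots,I$ and every $K\in\Tt$; the bound $u_{i,K}^n<1$ for $i\ge 1$ follows from $u_{0,K}^n\ge \epsilon$. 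To conclude existence, I would apply a topological degree argument to the eliminated map $U^n\mapsto U^n-\Phi(U^n)$ on the compact set $\{U_K\in\Aa: u_{i,K}\ge\epsilon/2,\; i=0,\dots,I\}$: introduce a convex homotopy from the identity (at $s=0$) to the full scheme (at $s=1$); all the previous energy computations apply along the homotopy, so that no solution touches the boundary of this compact set, and the degree at $s=0$ is $\pm 1$. Hence a solution exists at $s=1$. The characterization of the equilibrium is immediate from the explicit formula for $\Dd_\Tt^n$: $\Dd_\Tt^n=0$ iff $w_{i,K}^n=w_{i,L}^n$ for all $i$ and every internal edge $\sig=K|L$, i.e.\ the discrete electrochemical potentials are constant on $\Tt$; then all fluxes $F_{i,K\sig}^n$ vanish, \eqref{eq:fvscheme_conserv} gives $U_K^n=U_K^{n-1}$, and \eqref{eq:scheme.Poisson} yields $\phi_K^n=\phi_K^{n-1}$, so $((U_K^n),(\phi_K^n))$ is a stationary solution.

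\smallskip

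The main technical obstacle is that the formal derivation of \eqref{eq:disc.EDI} requires $U^n\in\mathrm{int}(\Aa)$ to make sense of the logarithms, which is precisely what is to be proved. The clean way around this is to first work with a regularized scheme obtained either by truncating $u_0$ from below by a parameter $\eta>0$, or by replacing $H$ by a $C^1$ extension to $\R^I$; for this regularized scheme Brouwer's theorem applies directly, the same energy estimate goes through and is uniform in $\eta$, and the resulting $\eta$-uniform strict positivity allows us to pass to the limit $\eta\to 0$ and recover a solution of the original scheme. Beyond this regularization issue, the remaining work is careful but essentially algebraic bookkeeping for the summation by parts, with appropriate treatment of the mixed boundary conditions on $\phi$.
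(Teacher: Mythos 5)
There is a genuine gap at the heart of your argument: the claim that ``$H$ blows up whenever one of the components of $U$ approaches $0$'' is false. The mixing entropy $H(U)=u_0\log u_0+\sum_{i=1}^I u_i\log u_i$ is \emph{bounded} on $\Aa$ (each term $u\log u$ lies in $[-1/e,0]$ for $u\in[0,1]$, and $H\geq -\log(I+1)$), so the bound $\sum_K H(U_K^n)m_K\leq C$ extracted from \eqref{eq:disc.EDI} carries no information whatsoever about how close the $u_{i,K}^n$ may come to $0$. Consequently your quantitative lower bound $u_{i,K}^n\geq\epsilon$ does not follow, the compact set $\{u_{i,K}\geq\epsilon/2\}$ on which you run the degree argument is not guaranteed to trap the zeros of the homotopy, and the logarithms in the energy identity remain unjustified --- the regularization you sketch at the end cannot rescue this, since the $\eta$-uniform positivity you invoke was never actually established. (The paper's numerical experiments, where $u_0$ takes extremely small values, indicate that no such quantitative uniform-in-$n$ lower bound should be expected.)

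The paper proves strict positivity by an entirely different, entropy-free mechanism: the fluxes are truncated with positive parts as in \eqref{eq:scheme.Fi.3}, and one observes that if $u_{0,K}^n\leq 0$ at some cell then every flux $F_{i,K\sig}^n$ leaving $K$ is nonnegative by the sign structure of \eqref{eq:scheme.Fi.3} and the strict positivity of $\B$; combined with $u_{0,K}^{n-1}\geq 0$ this forces $u_{0,K}^n=0$ and all these fluxes to vanish, which propagates the degeneracy to the neighbouring cells and, by connectedness of $\O$, to the whole mesh, contradicting mass conservation (Lemma~\ref{lem:mass}). The same reasoning handles $u_{i,K}^n$ for $i\geq 1$. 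With positivity and the $L^\infty$ bound on $\phi$ in hand, the homotopy runs on $[0,1]^{I\times\Tt}\times[-C,C]^\Tt$ with no need to stay away from $\partial\Aa$, and the energy inequality is derived \emph{a posteriori} for the now provably positive solution; the paper even remarks that not using the entropy inequality during the homotopy is what makes its existence proof simpler than earlier ones. Your derivation of \eqref{eq:disc.EDI} itself, and your characterization of $\Dd_\Tt^n=0$ via constancy of the discrete electrochemical potentials, do match the paper's; it is the logical order --- positivity must come first, and it cannot come from the entropy bound --- that needs to be repaired.
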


Theorem~\ref{theo:main.1} states that, at fixed mesh and time step, the numerical scheme admits a positive solution and preserves the $L^\infty$-bounds. Moreover, the inequality~\eqref{eq:disc.EDI} is a discrete counterpart to~\eqref{eq:EDE}. The discrete version $\mathcal{H}_\Tt^n$ of the free energy functional~\eqref{eq:H_tot} is rigorously defined in formula~\eqref{eq:free_energ_discr}. Schemes that verify~\eqref{eq:disc.EDI} in addition to more classical properties such as mass conservation or positivity preservation are often referred to in the literature as thermodynamically consistent or structure-preserving. Our scheme, therefore, falls into this class of schemes.

Theorem~\ref{theo:main.1} allows to define the so-called approximate solution to the problem, which consists of functions $U_{\Tt,\tau} = \left(u_{i,\Tt, \tau}\right)_{1\leq i \leq I}$ and $\phi_{\Tt,\tau}$, piecewise constant in both space and time, defined by 
\be\label{eq:uTt-phiTt}
u_{i,\Tt,\tau}(t,x) = u_{i,K}^n \quad \text{and}\quad  \phi_{\Tt,\tau}(t,x) = \phi_{K}^n \quad \text{if}\; (t,x) \in K \times (t^{n-1}, t^n].
\ee
Here again, we reconstruct $u_{0,\Tt,\tau}$ from $U_{\Tt,\tau}$ by setting 
\be\label{eq:u0Tt}
u_{0,\Tt, \tau} = 1 - \sum_{i=1}^I u_{i,\Tt, \tau}. 
\ee

Now, let $\left( \mathcal{T}_\ell, \mathcal{E}_\ell, \lbrace{x}_K\rbrace_{K \in \mathcal{T}_\ell}\right)_{\ell \geq 1}$ and $\left(\tau_\ell\right)_{\ell\geq 1}=\left(\left(\tau^n_\ell\right)_{n\geq 1}\right)_{\ell\geq 1}$ 
be respectively a sequence of admissible meshes and a sequence of time steps, in the sense of Section~\ref{sec:space_time_discr}, such that
\[\lim_{\ell \to \infty} h_{\Tt_\ell} =\lim_{\ell \to \infty} \Delta t_{\ell}  = 0, \quad \quad \Delta t_{\ell} = \max_n \tau^n_\ell,\]
while the mesh regularity factor $\zeta_{\Tt_\ell}$ remains uniformly bounded w.r.t. $\ell$, i.e. $\zeta_{\Tt_\ell}\leq \zeta_{\star} < + \infty.$
The convergence of our scheme can be stated in a very simplified way as follows:
\begin{theo}\label{theo:main.2}
There exists a weak solution $(U, \phi)$ in the sense of Definition~\ref{def:weak} such that, up to the extraction of a subsequence, 
\[
\phi_{\Tt_\ell, \tau_\ell} \underset{\ell \to + \infty} \longrightarrow \phi \quad\text{and}\quad 
u_{0,\Tt_\ell, \tau_\ell} \underset{\ell \to + \infty} \longrightarrow u_0 \quad \text{in}\; L^p_{\rm loc}(\Rplus; L^p(\O)) \quad \forall p \in [1,+\infty), 
\]
and 
\[
U_{\Tt_\ell, \tau_\ell} \underset{\ell \to + \infty} \longrightarrow U \quad \text{in the } L^\infty(\Rplus\times\O)^I \text{ weak-$\star$ sense}.
\]
\end{theo}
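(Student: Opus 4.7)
The plan is to combine the pointwise bounds from Theorem~\ref{theo:main.1} with the discrete energy--dissipation inequality~\eqref{eq:disc.EDI} to generate the compactness needed to pass to the limit in the weak formulations~\eqref{eq:weak.phi}--\eqref{eq:weak.ui}. The general strategy follows the boundedness--by--entropy approach of~\cite{GJ18, J15} transposed to the TPFA setting, in the spirit of~\cite{CH14_FVCA7, JZ23}.

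First, I would establish the a priori estimates. Iterating~\eqref{eq:disc.EDI} and using the uniform lower bound on $\Hh_\Tt^n$ (discrete version of Lemma~\ref{lem:boundentropy}) yields $\sum_n \tau^n \Dd_\Tt^n \leq C$ uniformly in $\ell$. This is the discrete analogue of~\eqref{eq:cont.dissip} and, after rewriting the flux via the Slotboom form~\eqref{eq:scheme.Fi.2}, it provides discrete $L^2_{\mathrm{loc}}(\Rplus;H^1(\O))$ bounds on $\sqrt{u_{0,\Tt,\tau}}$, on $u_{i,\Tt,\tau}\sqrt{u_{0,\Tt,\tau}}$, and, combined with the $L^\infty$ bound on $u_{i,\Tt,\tau}$, on $u_{i,\Tt,\tau}u_{0,\Tt,\tau}$. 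Standard energy estimates on the Poisson scheme~\eqref{eq:scheme.Poisson} with the bounded data $f$ and $\sum_i z_i u_i$ yield the discrete counterpart of~\eqref{eq:cont.phi.Linf+H1} for $\phi_{\Tt,\tau}$.

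Second, I would establish compactness. The $L^\infty$ bounds give weak-$\star$ compactness of $U_{\Tt_\ell,\tau_\ell}$ and the limit $U$ takes its values in $\Aa$ since $\Aa$ is closed and convex. Combining the discrete $H^1$ estimates with classical space--translate estimates \emph{à la} Eymard--Gallou\"et--Herbin and deducing time--translate estimates by testing~\eqref{eq:fvscheme_conserv} against suitable time differences, a discrete Aubin--Lions--Simon lemma provides strong compactness of $u_{0,\Tt_\ell,\tau_\ell}$ and $\phi_{\Tt_\ell,\tau_\ell}$ in $L^p_{\mathrm{loc}}(\Rplus\times\O)$ for every $p<\infty$, together with a.e.\ convergence. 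The reconstructed discrete gradients of $\sqrt{u_{0,\Tt,\tau}}$, of $u_{i,\Tt,\tau}u_{0,\Tt,\tau}$ and of $\phi_{\Tt,\tau}$ then converge weakly in $L^2_{\mathrm{loc}}$ to the true gradients of their strong limits.

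Third, I would identify the limit. The key is to rewrite the edge flux~\eqref{eq:scheme.Fi} as a discrete analogue of~\eqref{eq:Fi.3}, decomposed into a $\grad(u_0 u_i)$ piece, a $-4u_i\sqrt{u_0}\grad\sqrt{u_0}$ piece, and a drift piece $u_i u_0 z_i\grad\phi$; each discrete contribution then factors as a strongly convergent reconstruction multiplied by a weakly convergent discrete gradient, so a strong--weak product argument passes it to its continuous analogue. Plugging cell--mean interpolants of $\psi \in C^1_c(\O\cup\Gamma^N)$ into~\eqref{eq:scheme.Poisson}, and interpolants of $\varphi \in C^1_c(\Rplus\times\overline\O)$ into~\eqref{eq:fvscheme_conserv}, and summing over $K$ and $n$, the limit yields precisely~\eqref{eq:weak.phi} and~\eqref{eq:weak.ui}. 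The hard part is exactly this last decomposition: the quantities directly controlled by~\eqref{eq:disc.EDI} are the Slotboom increments $w_{i,L}^n - w_{i,K}^n$ of~\eqref{eq:scheme.Fi.2}, not $\grad(u_0u_i)$ or $\grad\sqrt{u_0}$. One thus has to perform a discrete Taylor expansion of $\B$ near $0$, reorganize the products $u_{i,K}^n u_{0,L}^n$ into mean plus difference parts, and absorb the resulting consistency errors by means of the strong $L^p_{\mathrm{loc}}$ convergence of $u_0$ and $\phi$ established in the previous step.
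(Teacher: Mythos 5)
Your proposal is correct and follows essentially the same route as the paper: uniform bounds from the discrete energy--dissipation inequality, discrete $L^2_{\rm loc}(H^1)$ estimates on $\sqrt{u_0}$, $u_i\sqrt{u_0}$ and $u_iu_0$, discrete Aubin--Lions compactness for $u_0$ and $\phi$, and identification of the limit flux by splitting $\B$ into its odd part $\B(-y)-\B(y)=y$ (drift) and its even part $\B(y)+\B(-y)=2+\Oo(y^2)$ (diffusion), exactly as in the paper's Lemmas on a priori bounds and in Propositions on compactness and identification. The only step you gloss over is the strong (hence a.e.) convergence of the edge reconstructions of the products $u_i\sqrt{u_0}$ and $u_iu_0$: since $u_i$ alone converges only weak-$\star$, this cannot be deduced from the strong convergence of $u_0$, and the paper obtains it from a ``degenerate'' discrete Aubin--Lions lemma applied directly to those products, using precisely the gradient bounds on $u_i\sqrt{u_0}$ and $u_iu_0$ that you listed in your first step together with the dual estimate on the time increments.
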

More convergence properties have to be established to prove Theorem~\ref{theo:main.2}. 
They are detailed in Section~\ref{sec:conv}, complementing the preliminary presentation of~\cite{CHM_FVCA}.

Our last main theoretical result is about the long-time asymptotic of the scheme for a fixed mesh and sequence of time steps. We denote by $\Psi_\Tt: \R^\Tt \times \R^I \to \R$ the functional defined by 
\begin{multline}\label{eq:PsiTt}
\Psi_\Tt(y_\Tt ,\boldsymbol \xi) = \frac{\lambda^2}2 \sum_{\sigma \in \Ee} a_\sig (y_K - y_{K\sig})^2 
+  \sum_{K\in\Tt} m_K \log\left( 1 + \sum_{i=1}^I e^{\xi_i - z_i {y}_K} \right)
\\
- \sum_{K\in\Tt} m_K \left[ f_K {y}_K + \sum_{i=1}^I \xi_i u_{i,K}^0 \right]. 
\end{multline}
In~\eqref{eq:PsiTt}, we made a slight abuse of notation as we denote by ${y}_\Tt = \left({y}_K\right)_{K\in\Tt}$ the element of $\R^\Tt$, and the mirror values ${y}_{K\sig}$ are defined by 
\[
{y}_{K\sig} =
\begin{cases}
{y}_L & \text{if}\; \sig = K|L \in \Ee_{\rm int}, \\
{y}_K& \text{if}\; \sig \subset \Gamma^N, \\
\phi_\sig^D &  \text{if}\; \sig \subset \Gamma^D.
\end{cases}
\]
\begin{theo}\label{theo:main.3}
The functional $\Psi_\Tt$ is strictly convex and admits a unique minimizer denoted by $\left(({\phi_K^\infty)}_{K\in\Tt},
{(\mu_{i,\Tt}^\infty)}_{1\leq i \leq I} \right)$. Moreover, one has 
\[
\phi_K^n \underset{n\to +\infty} \longrightarrow \phi_K^\infty \qquad \forall K \in \Tt, 
\]
and 
\[
u_{i,K}^n \underset{n\to +\infty} \longrightarrow v_i(\phi_K^\infty, \boldsymbol{\mu}_\Tt^\infty) \qquad \forall K \in \Tt, \; 0 \leq i \leq I. 
\]
\end{theo}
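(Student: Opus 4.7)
The argument naturally splits into three stages. In the first one, which establishes strict convexity of $\Psi_\Tt$ and hence existence and uniqueness of a minimizer, the plan is to decompose $\Psi_\Tt$ into a non-negative Dirichlet quadratic $\frac{\lambda^2}{2}\sum_{\sigma\in\Ee} a_\sig(y_K - y_{K\sig})^2$, into the convex cellwise terms $m_K\log\bigl(1+\sum_{i=1}^I e^{\xi_i - z_i y_K}\bigr)$ (each being the composition of the log-sum-exp function with an affine map), and into affine terms. To pass from convexity to strict convexity, I would analyse the Hessian along an arbitrary non-zero direction $(\tilde y_\Tt, \tilde{\boldsymbol\xi})$. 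If $\tilde y_\Tt \neq 0$, the quadratic contribution is strictly positive thanks to a discrete Poincaré inequality, which applies because $\Gamma^D$ is assumed non-empty. If $\tilde y_\Tt = 0$ and $\tilde{\boldsymbol\xi}\neq 0$, then the Hessian of the log-sum-exp terms restricted to the $\boldsymbol\xi$ variables reduces in each cell to the covariance matrix $\operatorname{diag}(v_K) - v_K v_K^\top$ with $v_K := (v_i(y_K,\boldsymbol\xi))_{1\leq i\leq I}$; since the associated probability vector $(v_i(y_K,\boldsymbol\xi))_{0\leq i\leq I}$ has all entries strictly positive, this covariance is positive definite on $\R^I$. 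Combining this with coercivity of $\Psi_\Tt$ (the Dirichlet quadratic controls $y_\Tt$, while the superlinear growth of the log-sum-exp in $\boldsymbol\xi$ dominates the linear term $-\sum_i \xi_i \int_\O u_i^0$ thanks to $0<\int_\O u_i^0 < m_\O$ for every $i$, a consequence of~\eqref{eq:init}) yields the unique minimizer $\left((\phi_K^\infty)_{K\in\Tt},\boldsymbol\mu_\Tt^\infty\right)$.

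In the second stage, I would identify the minimizer with a stationary solution of the scheme by writing out the Euler--Lagrange equations. The identity $\partial_{y_K}\Psi_\Tt = 0$ is exactly the discrete Poisson equation~\eqref{eq:scheme.Poisson} applied with $u_{i,K}^\infty := v_i(\phi_K^\infty, \boldsymbol\mu_\Tt^\infty)$ (using $\partial_y \log(\cdots) = r(y,\boldsymbol\xi) = -\sum_i z_i v_i$), while $\partial_{\xi_i}\Psi_\Tt = 0$ reads $\sum_K m_K v_i(\phi_K^\infty,\boldsymbol\mu_\Tt^\infty) = \sum_K m_K u_{i,K}^0$, i.e.\ total mass conservation. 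For such a profile, the discrete Slotboom variables $w_{i,K}^\infty = e^{\mu_i^\infty}$ are constant in $K$, so the rewriting~\eqref{eq:scheme.Fi.2} yields zero fluxes and the minimizer is a stationary solution. Conversely, any stationary solution satisfies $\Dd_\Tt = 0$; by the characterisation stated in Theorem~\ref{theo:main.1} this forces the $w_{i,K}$'s to be constant over the (connected) mesh, hence $u_{i,K} = v_i(\phi_K, \boldsymbol\mu)$ for some $\boldsymbol\mu$, and the strict convexity just established identifies $(\phi,\boldsymbol\mu)$ with $(\phi^\infty,\boldsymbol\mu_\Tt^\infty)$.

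For the long-time convergence, I would run a LaSalle-type argument. Iterating the energy-dissipation inequality~\eqref{eq:disc.EDI} and using a uniform lower bound on $\Hh_\Tt^n$ (the discrete counterpart of Lemma~\ref{lem:boundentropy}) gives $\sum_{n\geq 1}\tau^n\Dd_\Tt^n < +\infty$; the standing assumption $\inf_n\tau^n>0$ then implies $\Dd_\Tt^n\to 0$. Since $U_K^n \in \Aa$ and $\phi_K^n$ is bounded uniformly in $n$ by the discrete counterpart of~\eqref{eq:cont.phi.Linf+H1}, the sequence $((U_K^n)_K,(\phi_K^n)_K)$ lives in a compact subset of $\Aa^\Tt \times \R^\Tt$, and every subsequence admits cluster points. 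Passing to the limit in $\Dd_\Tt$ along such a subsequence, each cluster point satisfies $\Dd_\Tt = 0$ and is hence a stationary solution, which by the uniqueness obtained in the first two stages must coincide with $\left((v_i(\phi_K^\infty,\boldsymbol\mu_\Tt^\infty))_{i,K},(\phi_K^\infty)_K\right)$. Since the limit is independent of the extracted subsequence, the whole sequence converges.

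The main technical obstacle will be passing to the limit cleanly inside $\Dd_\Tt^n$: although $u_{i,K}^n \in (0,1)$ for every $n$, nothing a priori prevents a cluster point from landing on the boundary of $\Aa^\Tt$, where the Slotboom reformulation degenerates. I would handle this by using the exact discrete mass-conservation identities $\sum_K m_K u_{i,K}^n = \int_\O u_i^0 >0$ for $i=0,\ldots,I$, together with the uniform upper bound on $\Hh_\Tt^n$ (which controls $\sum_K m_K u_{i,K}^n \log u_{i,K}^n$ from below), to rule out cluster points with some $u_{i,K}^\star = 0$ and thereby ensure that the characterisation of zero-dissipation states remains applicable in the limit.
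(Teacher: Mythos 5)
Your overall architecture — strict convexity of $\Psi_\Tt$, identification of its critical points with zero-flux states via the Euler--Lagrange equations and the Slotboom rewriting \eqref{eq:scheme.Fi.2}, and a LaSalle argument driving $\Dd_\Tt^n\to 0$ — coincides with the paper's (Propositions~\ref{prop:steady.1} and~\ref{prop:steady.2}), up to a reordering: the paper obtains \emph{existence} of the minimizer from the dynamics (the LaSalle cluster point is shown to be a critical point of $\Psi_\Tt$), whereas you propose a direct coercivity argument; and the paper proves strict convexity through monotonicity of $D\Psi_\Tt$ and the Legendre transform $H^*$ of \eqref{eq:H} (with a H\"older-inequality equality case), which is equivalent in content to your Hessian/covariance computation. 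These variants are all fine (your ``superlinear growth'' of the log-sum-exp is really linear growth with coefficient $m_\O>\int_\O u_i^0$, but the strict inequalities $0<\int_\O u_i^0<m_\O$ that you invoke are exactly what makes the coercivity work).

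The genuine gap is in your last paragraph, i.e.\ precisely at the step you identify as the main obstacle. You propose to rule out cluster points with some $u_{i,K}^\star=0$ by combining mass conservation with ``the uniform upper bound on $\Hh_\Tt^n$ (which controls $\sum_K m_K u_{i,K}^n\log u_{i,K}^n$ from below)''. This cannot work: $u\mapsto u\log u$ is continuous and bounded on $[0,1]$, so the discrete mixing entropy does \emph{not} blow up when a volume fraction vanishes at a cell, and Lemma~\ref{lem:boundentropy} gives no obstruction to a cluster point sitting on the boundary of $\Aa^\Tt$. Mass conservation \eqref{eq:ui.mass} alone only excludes $u_{i,K}^\star=0$ for \emph{all} $K$ simultaneously. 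The correct mechanism — the one the paper uses in Proposition~\ref{prop:steady.1}, mirroring Proposition~\ref{prop:positivity} — is a propagation-of-zeros argument based on the polynomial form \eqref{eq:scheme.Fi.inf} of the fluxes: at a cluster point all fluxes vanish (by continuity of $\Dd_\Tt$ and of the fluxes, without ever invoking the Slotboom form), and $F_{i,K\sig}^\infty=0$ with, say, $u_{0,K}^\infty=0$ forces either $u_{i,K}^\infty=0$ for every $i$ (contradicting $\sum_{i=0}^I u_{i,K}^\infty=1$) or $u_{0,L}^\infty=0$ in every neighboring cell $L$; connectedness of $\O$ then spreads the zero to the whole mesh, contradicting \eqref{eq:constraint.mass}. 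The same reasoning handles $u_{i,K}^\infty=0$ once $u_{0,\Tt}^\infty>0$ is known. You should replace the entropy-based exclusion by this argument; with that substitution the rest of your proof goes through.
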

We deduce from this theorem an efficient way to compute the long-time behavior of the scheme, which consists in seeking for $\left(({\phi_K^\infty)}_{K\in\Tt}, {(\mu_{i,\Tt}^\infty)}_{1\leq i \leq I} \right)$ belonging to $\operatorname{argmin} \Psi_\Tt$ and to a posteriori reconstruct the volume fraction profiles thanks to the functions $v_i$ defined by~\eqref{eq:vi}. 
We also remark that the uniqueness of the long-time limit is established in the discrete setting, while the question remains open in the continuous setting. Conjecturing the uniqueness of such an asymptotic limit for the continuous model, which then has to be the unique minimizer of $\Psi$ defined by~\eqref{eq:steady.Psi}, then Theorem~\ref{theo:main.3} illustrates in yet another aspect the structure-preserving nature of our scheme. 

The rest of the paper is organized as follows. Section~\ref{sec:num_an_fixed_grid} is devoted to the proof of Theorem~\ref{theo:main.1}, as well as to some post-treatment of the energy / energy dissipation inequality \eqref{eq:disc.EDI} in order to derive some uniform bounds on quantities to be used in the convergence analysis carried out in Section~\ref{sec:conv}, where the proof of Theorem~\ref{theo:main.2} is detailed. Our last main theoretical result Theorem~\ref{theo:main.3} is proved in Section~\ref{sec:asymptotic}.
The numerical results presented in Section~\ref{sec:num} contain some numerical evidence of the convergence of the scheme. They also illustrate that the situation is more complex than what Theorem~\ref{theo:main.3} suggests at first glance. Although the approximate volume fractions are (strictly) positive as pointed out in Theorem~\ref{theo:main.1}, $u_{0,\Tt,\tau}$ can take very small values in some parts of the domain. 
The discrete fluxes~\eqref{eq:scheme.Fi} then take very small values as well, making the evolution extremely slow. 
As the linear and non-linear systems corresponding to the scheme are merely solved approximately, one furthermore has to pay a particular attention to preserve the positivity at the discrete level.


\section{Uniform a priori bounds and existence of a discrete solution
    } \label{sec:num_an_fixed_grid}

    The goal of this section is to prove Theorem~\ref{theo:main.1}, i.e. to show that the non-linear system corresponding to the scheme~\eqref{eq:scheme.Poisson}--\eqref{eq:u0Kn} admits at least one solution, and that beyond local conservativity, this solution preserves at the discrete level some key features of the model, namely the positivity of the volume fractions, as well as uniform $L^{\infty}$ bounds for them, and the decay of the free energy. Furthermore, in this part, precise quantification of the dissipated total free energy supplies enough uniform estimates, in the mesh size $h_\Tt$ and time steps $\left( \tau_n \right)_{n \geq 1}$, to perform the convergence analysis in Section \ref{sec:conv}. 
    The grid $\Tt$ and the time steps $\left(\tau^n\right)_{n\geq 1}$ remain fixed throughout this section.

\subsection{Uniform a priori bounds and existence of a discrete solution}
\label{ssec:exist}
The first Lemma regards the discrete counterpart of the conservation of the total mass of the system.
\begin{lem}[Mass conservation]\label{lem:mass}
    It holds that
    \be\label{eq:ui.mass}
\sum_{K\in\Tt} u_{i,K}^n m_K = \sum_{K\in\Tt} u_{i,K}^{n-1} m_K = \sum_{K\in\Tt} u_{i,K}^0 m_K = \int_\O u_i^0 >0, \qquad 0 \leq i \leq I. 
\ee
\end{lem}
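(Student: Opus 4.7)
The plan is to exploit the local conservativity of the two-point flux finite volume scheme, which is baked into the antisymmetry $F_{i,K\sigma}^n + F_{i,L\sigma}^n = 0$ for every interior edge $\sigma = K|L$, together with the no-flux boundary condition $F_{i,K\sigma}^n = 0$ for $\sigma \subset \partial \Omega$, and then handle the solvent species $i=0$ through the algebraic constraint \eqref{eq:u0Kn}.

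First, I would fix $i \in \{1, \dots, I\}$ and sum the discrete balance law \eqref{eq:fvscheme_conserv} over all $K \in \Tt$. Rearranging the resulting double sum $\sum_{K \in \Tt} \sum_{\sigma \in \Ee_K} F_{i,K\sigma}^n$ as a sum over edges, every interior face $\sigma = K|L$ is visited exactly twice with opposite contributions $F_{i,K\sigma}^n$ and $F_{i,L\sigma}^n$; inspecting \eqref{eq:scheme.Fi} directly shows that these two quantities are opposite (the two product terms are just swapped), while boundary contributions vanish by assumption. Hence the edge sum is zero, which yields $\sum_{K \in \Tt} u_{i,K}^n m_K = \sum_{K \in \Tt} u_{i,K}^{n-1} m_K$ for each $n \geq 1$. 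A straightforward induction on $n$ propagates the equality to the initial datum.

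For the solvent species $i=0$, I would simply sum the algebraic identity \eqref{eq:u0Kn} against $m_K$, which gives $\sum_{K\in\Tt} u_{0,K}^n m_K = |\Omega| - \sum_{i=1}^I \sum_{K\in\Tt} u_{i,K}^n m_K$; applying the conservation already obtained for $i=1,\dots,I$ together with the constraint $\sum_{i=0}^I u_{i,K}^0 = 1$ from \eqref{eq:uiK0.2} (multiplied by $m_K$ and summed) produces the desired equality $\sum_{K\in\Tt} u_{0,K}^n m_K = \sum_{K\in\Tt} u_{0,K}^0 m_K$.

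Finally, the rightmost equality in \eqref{eq:ui.mass} is a direct consequence of the definition \eqref{eq:uiK0} of $u_{i,K}^0$ as cell averages of $u_i^0$, and the strict positivity $\int_\Omega u_i^0 > 0$ is postulated in the initial condition \eqref{eq:init} for $i=1,\dots,I$, while for $i=0$ it already appears in \eqref{eq:uiK0.2}. There is no real obstacle here: the only mild point requiring care is checking the sign convention in \eqref{eq:scheme.Fi} to make the telescoping explicit, but this is immediate upon rewriting the sum over edges.
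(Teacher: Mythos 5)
Your proof is correct and follows essentially the same route as the paper's (much terser) argument: summing \eqref{eq:fvscheme_conserv} over $K$, using the antisymmetry $F_{i,K\sigma}^n + F_{i,L\sigma}^n = 0$ together with the no-flux boundary condition, and concluding by induction and \eqref{eq:uiK0}. Your explicit treatment of the solvent case $i=0$ via \eqref{eq:u0Kn} and \eqref{eq:uiK0.2} is a detail the paper leaves implicit, but it is the intended argument.
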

\begin{proof}
    Since our scheme is locally conservative, i.e., $F_{i,K\sig}^n + F_{i,L\sig}^n= 0$ for all $\sig=K|L \in \Ee_{\rm int}$, $i=1,\dots,I$, then summing~\eqref{eq:fvscheme_conserv} over $K$ shows by induction and thanks to~\eqref{eq:uiK0} the result. 
\end{proof}
Since we are interested in discrete solutions with positive volume fractions $u_{i,K}^n$, we perform an eventually harmless modification of the flux formula~\eqref{eq:scheme.Fi} into 
\be\label{eq:scheme.Fi.3}
F_{i,K\sigma}^n = a_\sig D_i\left(\left(u^n_{i,K}\right)^+\! \left(u^{n}_{0,L}\right)^+\!\B\left(z_i(\phi_L^n - \phi_K^n)\right) - \left(u^n_{i,L}\right)^+\!\left(u^{n}_{0,K}\right)^+\!\B\left(z_i(\phi_K^n - \phi_L^n)\right)\right).
\ee
\begin{prop}[Positivity of volume fractions]\label{prop:positivity}
Let $n\geq 1$, and let $\left(U_K^{n-1}\right)_{K\in\Tt}$ be such that 
\be\label{eq:uKn-1}
u_{i,K}^{n-1} \geq 0,\quad \sum_{i=0}^I u_{i,K}^{n-1} = 1 \quad \forall \, K \in \Tt, \quad \text{and}\quad \sum_{K\in\Tt} u_{i,K}^{n-1} m_K >0. 
\ee
Then any solution $\left(U_K^n, \phi_K^n \right)_{K\in\Tt,n\geq 1}$ to the modified scheme with~\eqref{eq:scheme.Fi.3} instead of \eqref{eq:scheme.Fi} satisfies $u_{i,K}^n >0$ for all $i=0,\dots, I$ and all $K\in \Tt$.
\end{prop}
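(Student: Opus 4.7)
The plan is to proceed in two stages: first establish non-negativity of all $u_{i,K}^n$ for $i = 0, \dots, I$, and then upgrade to strict positivity by a discrete strong maximum principle based on mesh connectivity.

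For the non-negativity step, I would test the scheme in each cell against the \emph{negative part} of the candidate solution. Fix $i \in \{1,\dots,I\}$ and pick a cell $K_\star$ realizing $\min_K u_{i,K}^n$. If that minimum were negative, then $(u_{i,K_\star}^n)^+ = 0$, so each modified flux $F_{i,K_\star\sigma}^n$ given by~\eqref{eq:scheme.Fi.3} reduces to its second (incoming) term, which is $\leq 0$ since $\B>0$ and all positive parts are non-negative. The backward Euler update~\eqref{eq:fvscheme_conserv} then yields $u_{i,K_\star}^n \geq u_{i,K_\star}^{n-1} \geq 0$, a contradiction. The same argument applied to the reconstructed quantity $u_{0,K}^n = 1 - \sum_{i\geq 1} u_{i,K}^n$, whose evolution is governed by $F_{0,K\sigma}^n := -\sum_{i\geq 1} F_{i,K\sigma}^n$ (which, after rearrangement, has exactly the same sign structure in $(u_{0,K}^n)^+$ and $(u_{0,L}^n)^+$), gives $u_{0,K}^n \geq 0$. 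At this point $(\cdot)^+$ coincides with $(\cdot)$ throughout and the modified scheme reduces to the original one.

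For the strict positivity step, I rewrite the scheme for $i \geq 1$ as $A_K^{(i)}\, u_{i,K}^n = u_{i,K}^{n-1} + \frac{\tau^n}{m_K} \sum_{\sigma=K|L} a_\sigma D_i\, u_{i,L}^n u_{0,K}^n\,\B(z_i(\phi_K^n-\phi_L^n))$, where $A_K^{(i)}>0$ and every summand on the right is non-negative. Introduce $Z_0 = \{K : u_{0,K}^n = 0\}$ and $Z_i = \{K : u_{i,K}^n = 0\}$. If $K \in Z_i$ has a neighbor $L \notin Z_i$, then the vanishing of the right-hand side forces $u_{0,K}^n = 0$, so $K \in Z_0$; hence the "boundary" of $Z_i$ in $\Tt$ is contained in $Z_0$. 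Writing the analogous identity for $u_0$ (obtained by summing the scheme over $i$), and noting that if $K \in Z_0$ has a neighbor $L \notin Z_0$, then $u_{i,K}^n u_{0,L}^n = 0$ for every $i$ forces $u_{i,K}^n = 0$ for all $i\geq 1$, which together with $u_{0,K}^n=0$ violates $\sum_{i=0}^I u_{i,K}^n = 1$. Thus $Z_0$ has no boundary in $\Tt$, so by connectivity of the mesh either $Z_0 = \emptyset$ or $Z_0 = \Tt$. The latter is ruled out by mass conservation (Lemma~\ref{lem:mass}) applied to $u_0$ together with the assumption $\sum_K u_{0,K}^{n-1} m_K > 0$. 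Hence $Z_0 = \emptyset$, and then the inclusion $\partial Z_i \subseteq Z_0 = \emptyset$ forces $Z_i$ to be either empty or all of $\Tt$; again mass conservation excludes the latter, which concludes the proof.

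The main technical hurdle is the coupling between the positivity of $u_0$ and of the $u_i$'s: the degenerate factor $u_0$ in the fluxes means one cannot directly run a species-by-species strong maximum principle, and the joint argument must carefully exploit the identity $\sum_{i=0}^I u_{i,K}^n = 1$ to convert a would-be zero of $u_0$ at an interface cell into a contradiction. A minor but necessary point to verify alongside is that the inductive hypothesis~\eqref{eq:uKn-1} propagates to step $n$, i.e. that $\sum_K u_{i,K}^n m_K > 0$ for all $i = 0, \dots, I$, which follows immediately from Lemma~\ref{lem:mass}.
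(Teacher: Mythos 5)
Your proof is correct and follows essentially the same route as the paper's: the sign analysis of the modified fluxes \eqref{eq:scheme.Fi.3} when a positive part vanishes, the backward-Euler squeeze $0 \geq u_{\cdot,K}^n \geq u_{\cdot,K}^{n-1} \geq 0$, the propagation of zeros to neighbouring cells via the strict positivity of $\B$, and the final contradiction with mass conservation and with $\sum_{i=0}^I u_{i,K}^n = 1$ are exactly the ingredients of the paper's argument. The only difference is organizational: you run a separate non-negativity pass for all components before the strict-positivity/connectivity argument on the zero sets $Z_0$, $Z_i$, whereas the paper merges both within a single contradiction per component, treating $u_0$ first and then each $u_i$.
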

\begin{proof}
Let us start by establishing the positivity of $u_{0,K}^n$. Assume for contradiction that there exists a cell $K\in\Tt$ such that $u_{0,K}^n \leq 0$. Then we deduce from formula~\eqref{eq:scheme.Fi.3} that $F_{i,K\sig}^n \geq 0$ for all $\sig \in \Ee_K$ and all $i =1,\dots, I$. Because of~\eqref{eq:u0Kn} and~\eqref{eq:uKn-1}, this implies that 
\[
0 \geq u_{0,K}^n = u_{0,K}^{n-1} + \frac{\tau^n}{m_K} \sum_{i=1}^I \sum_{\sig \in \Ee_K} F_{i,K\sig}^n \geq 0. 
\]
In particular, all the fluxes $F_{i,K\sig}^n$, $i=1,\dots, I$ and $\sig \in \Ee_K$, are equal to $0$. In view of formula~\eqref{eq:scheme.Fi.3} and of the strict positivity of $\B$, this implies either that $u_{i,K}^n \leq 0$ for all $i$, which yields a contradiction with~\eqref{eq:u0Kn}, or that $u_{0,L}^n \leq 0$ for all the cells $L$ sharing an edge $\sig = K|L$ with $K$. Since $\O$ is connected, one would obtain that $u_{0,K}^n = 0$ for all $K \in \Tt$ and thus that $\sum_{K\in\Tt} u_{0,K}^n m_K = 0$. This contradicts \eqref{eq:ui.mass}, and thus we necessarily have that $u_{0,K}^n >0$ for all $K\in\Tt$. 

With the positivity of $u_{0,K}^n$, $K\in\Tt$, at hand, let us focus on the $u_{i,K}^n$ for an arbitrary $i=1,\dots, I$. Similarly, we assume that there exists some $K\in\Tt$ such that $u_{i,K}^n \leq 0$. Then owing to~\eqref{eq:scheme.Fi.3}, we infer that $F_{i,K\sig}^n \leq 0$ for all $\sig \in \Ee_K$, and then that 
\[
0 \geq u_{i,K}^n = u_{i,K}^{n-1} -  \frac{\tau^n}{m_K} \sum_{\sig \in \Ee_K} F_{i,K\sig}^n \geq 0.
\]
This leads to $u_{i,K}^n = 0$ and to $F_{i,K\sig}^n = 0$ for all $\sig \in \Ee_K$.
Since we already know that $u_{0,K}^n >0$, we deduce from \eqref{eq:scheme.Fi.3} that $u_{i,L}^n \leq 0$ for all cells $L$ sharing a face $\sigma = K|L$ with $K$. As above, this implies as $u_{0,K}^n = 0$ for all $K\in\Tt$, which contradicts~\eqref{eq:ui.mass}. Then $u_{i,K}^n >0$ for all $K \in \Tt$, concluding the proof of Proposition~\ref{prop:positivity}. 
\end{proof}

A consequence of the previous proposition is that a solution to the modified scheme with \eqref{eq:scheme.Fi.3} instead of \eqref{eq:scheme.Fi} is also a solution to the original scheme~\eqref{eq:scheme.Poisson}--\eqref{eq:u0Kn}. 

As we did assume that the {background charge density} $f$ and thus its discrete counterpart  $\left(f_K\right)_{K\in\Tt}$ are uniformly bounded, and that $\phi^D$ belongs to $L^\infty\cap H^{1/2}(\Gamma^D)$,
we deduce 
a uniform bound for the electric potential.
\begin{prop}[Uniform bound for the electric potential]\label{prop:bound.phi}
There exists $C=C(\phi^D, \O, \lambda, f, {(z_i)}_{i}, \zeta_\Tt)$ such that
\be\label{eq:phi.Linf}
\max_{K\in\Tt} |\phi_K^n| + \left(\sum_{\sigma \in \Ee} a_\sigma \left( \phi_K^n - \phi_{K\sig}^n\right)^2\right)^{1/2} \leq C, \qquad n \geq 1.
\ee
\end{prop}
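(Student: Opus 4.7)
The plan is to split the estimate into two pieces: first derive the discrete $H^1$-seminorm bound through a standard energy estimate after lifting the Dirichlet datum, then upgrade it to an $L^\infty$ bound via a discrete Stampacchia iteration. Throughout, the right-hand side $g_K^n := f_K + \sum_{i=1}^I z_i u_{i,K}^n$ is uniformly bounded by $\|f\|_{L^\infty} + \sum_i |z_i|$, thanks to the $L^\infty$ bound $u_{i,K}^n \in (0,1)$ coming from Proposition~\ref{prop:positivity} together with~\eqref{eq:u0Kn}.

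For Step 1, I would build a cell-wise interpolant $(\phi_K^D)_{K\in\Tt}$ of the lifting $\phi^D$, with mirror values at Dirichlet faces prescribed as in the scheme, and with quantitative control $\sum_\sigma a_\sigma (\phi_K^D - \phi_{K\sigma}^D)^2 \leq C\|\phi^D\|_{H^1}^2$. Multiplying~\eqref{eq:scheme.Poisson} by $\phi_K^n - \phi_K^D$, summing over $K \in \Tt$ and rearranging the double sum into a sum over faces gives
\[
\lambda^2 \sum_\sigma a_\sigma (\phi_K^n - \phi_{K\sigma}^n)\big((\phi_K^n - \phi_K^D) - (\phi_{K\sigma}^n - \phi_{K\sigma}^D)\big) = \sum_K m_K g_K^n (\phi_K^n - \phi_K^D).
\]
Cauchy--Schwarz and Young's inequality dispose of the cross term coming from the lifting, while the discrete Poincaré inequality (valid since $\phi^n - \phi^D$ vanishes on $\Gamma^D$) applied to the right-hand side produces a bound of the form $C\,|\phi^n - \phi^D|_{1,\Tt}$, which is then absorbed into the left-hand side. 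This yields $\sum_\sigma a_\sigma (\phi_K^n - \phi_{K\sigma}^n)^2 \leq C$.

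For Step 2, I would set $M = \|\phi^D\|_{L^\infty}$ and, for $k \geq M$, test~\eqref{eq:scheme.Poisson} with the truncation $v_K^k := (\phi_K^n - k)^+$, which vanishes on $\Gamma^D$-faces by the choice of $k$. Using the monotonicity inequality $(a - b)((a-k)^+ - (b-k)^+) \geq \big((a-k)^+ - (b-k)^+\big)^2$, the discrete analogue of the Stampacchia trick gives
\[
\lambda^2 \sum_\sigma a_\sigma (v_K^k - v_{K\sigma}^k)^2 \leq \|g^n\|_{L^\infty} \sum_K m_K v_K^k.
\]
Denoting the super-level set by $A_k = \{K : \phi_K^n > k\}$, a discrete Sobolev inequality of the form $\|v^k\|_{L^{p^*}} \leq C(\zeta_\star,\Omega)\,(\sum_\sigma a_\sigma (v_K^k - v_{K\sigma}^k)^2)^{1/2}$ for some $p^*>2$, combined with Hölder's inequality $\sum_K m_K v_K^k \leq |A_k|^{1-1/p^*}\|v^k\|_{L^{p^*}}$, produces the iterative bound
\[
|A_h| \leq \frac{C}{(h-k)^{p^*}}|A_k|^{p^*-1}, \qquad h > k \geq M,
\]
whose exponent $p^*-1 > 1$ allows the classical Stampacchia lemma to conclude that $A_{k^\star}$ is empty for some finite $k^\star$. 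A fully symmetric argument applied to $-\phi^n$ yields the lower bound, and the two combine to give the desired $\max_K |\phi_K^n| \leq C$.

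The main technical obstacle is to ensure that the discrete Sobolev inequality (and the discrete Poincaré inequality in Step 1) holds under the mixed Dirichlet--Neumann setting with constants depending only on $\zeta_\Tt \leq \zeta_\star$ and $\Omega$, uniformly in $h_\Tt$. The core arguments are standard in the TPFA finite volume literature (see e.g.~\cite{Eymard2000}), but the presence of a Neumann portion $\Gamma^N$ requires checking that the Dirichlet part $\Gamma^D$ has positive $(d{-}1)$-measure, which is implicit in the setup.
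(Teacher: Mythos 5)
Your argument is correct, but note that the paper does not actually prove this proposition: it delegates the entire statement to \cite[Proposition A.1]{CCFG21}, observing only that the right-hand side of the discrete Poisson equation is uniformly bounded thanks to $0\le u_{i,K}^n\le 1$. What you supply is therefore a self-contained substitute for the cited result, and it follows the standard TPFA route: the discrete $H^1$ seminorm bound via testing with $\phi_K^n-\phi_K^D$ after lifting the Dirichlet datum (the control $\sum_\sig a_\sig(\phi^D_\sig-\phi^D_K)^2\le C(\zeta_\Tt)\|\grad\phi^D\|^2_{L^2}$ is exactly Lemma 9.4 of \cite{Eymard2000}, which the paper itself invokes in the proof of its Lemma 3.6), and the $L^\infty$ bound via a discrete Stampacchia iteration built on the truncations $(\phi_K^n-k)^+$ with $k\ge\|\phi^D\|_{L^\infty}$ and a discrete Sobolev embedding with exponent $p^*>2$. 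All the individual steps check out: the Neumann faces drop out of the summation by parts since $\phi^n_{K\sig}=\phi^n_K$ there, the Dirichlet faces contribute nothing to the truncated test function by the choice of $k$, and the iteration exponent $p^*-1>1$ is available in every dimension. The one genuinely load-bearing hypothesis is the one you flag at the end: the discrete Poincar\'e and Sobolev inequalities for functions vanishing only on the $\Gamma^D$-faces require $\Ee^D$ to be nonempty (equivalently $\Gamma^D$ of positive measure), an assumption the paper uses explicitly only later, in the proof of Proposition 5.2, but which is indeed needed already here and is implicit in the well-posedness of the mixed Poisson problem. Your identification of the constant's dependence on $\phi^D$, $\O$, $\lambda$, $f$, $(z_i)_i$ and $\zeta_\Tt$ matches the statement.
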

The proof of this result can be found in~\cite[Proposition A.1]{CCFG21} and is a consequence of the uniform boundedness of the right-hand side of the discrete Poisson equation~\eqref{eq:scheme.Poisson}. 

These a priori estimates are sufficient to prove the existence of a solution to the scheme. 
We end up with the following proposition.

\begin{prop}[Existence of solutions]\label{prop:existence}
There exists at least one solution to the numerical scheme~\eqref{eq:scheme.Poisson}--\eqref{eq:u0Kn} such that 
$u_{i,K}^n >0$ for all $i = 0,\dots, I$, for all $K \in \Tt$ and all $n\geq 1$. 
\end{prop}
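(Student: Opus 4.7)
The plan is to prove existence by topological degree applied to the modified scheme (with fluxes \eqref{eq:scheme.Fi.3} in place of \eqref{eq:scheme.Fi}), proceeding by induction on $n \geq 1$. At each time step, assuming $U^{n-1}$ satisfies the hypotheses \eqref{eq:uKn-1}, I will produce a solution to the modified scheme; Proposition~\ref{prop:positivity} then forces $u_{i,K}^n > 0$ for all $i = 0, \dots, I$, so that the positive parts in \eqref{eq:scheme.Fi.3} are inactive and the solution also solves the original scheme. Moreover, this new iterate trivially satisfies \eqref{eq:uKn-1} again, closing the induction; the base case $n = 1$ is provided by \eqref{eq:uiK0.2}.

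To produce a solution to the modified scheme at step $n$, I introduce a homotopy $H : [0, 1] \times \R^{(I+1)|\Tt|} \to \R^{(I+1)|\Tt|}$ whose zeros $(U, \phi)$ at parameter $\theta$ satisfy the discrete Poisson equation \eqref{eq:scheme.Poisson} together with the modified conservation relation
\[
(u_{i,K} - u_{i,K}^{n-1})\, m_K + \theta\, \tau^n \sum_{\sigma \in \Ee_K} F_{i,K\sigma} = 0, \qquad i = 1, \dots, I, \quad K \in \Tt,
\]
with $F_{i,K\sigma}$ given by \eqref{eq:scheme.Fi.3}. At $\theta = 1$ this is exactly the modified scheme. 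At $\theta = 0$ the system decouples to $U = U^{n-1}$ together with a linear Poisson equation with bounded source, and so admits a single explicit reference solution $V^\star = (U^{n-1}, \phi^\star)$.

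The key step is to derive uniform a priori bounds along this homotopy. Proposition~\ref{prop:bound.phi} applies at any $\theta \in [0, 1]$ since its proof only relies on an $L^\infty$ control of the right-hand side of the discrete Poisson equation, which is guaranteed as soon as $u_{i,K} \in [0, 1]$. For the volume fractions, the sign argument in the proof of Proposition~\ref{prop:positivity} transfers verbatim to the homotopy whenever $\theta > 0$: the sign structure of \eqref{eq:scheme.Fi.3} is preserved by multiplication by the positive factor $\theta$, and the mesh-connectivity step does not change. This yields $0 < u_{i,K} < 1$ for $i = 0, \dots, I$ at any $\theta > 0$, while at $\theta = 0$ the unique solution has $U = U^{n-1} \in [0,1]^{I|\Tt|}$. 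Choosing $\epsilon>0$ and $M>0$ large enough, every zero of $H(\theta, \cdot)$ for $\theta \in [0, 1]$ lies strictly inside the bounded open set $\mathcal{V} = (-\epsilon, 1 + \epsilon)^{I|\Tt|} \times (-M, M)^{|\Tt|}$.

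It remains to evaluate the degree. The map $H(0, \cdot)$ is affine with block-triangular Jacobian: the diagonal $U$-block is $\operatorname{diag}(m_K)$ (with $m_K > 0$), while the $\phi$-block is the stiffness matrix of the discrete Laplacian with mixed Neumann--Dirichlet boundary conditions, which is invertible thanks to the positive $(d-1)$-measure of $\Gamma^D$. Hence $\deg(H(0, \cdot), \mathcal{V}, 0) = \pm 1 \neq 0$, and homotopy invariance yields $\deg(H(1, \cdot), \mathcal{V}, 0) \neq 0$, producing a zero of $H(1, \cdot)$ in $\mathcal{V}$, i.e.\ a solution of the modified scheme (and hence of the original one). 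The main obstacle in this program is precisely the derivation of bounds that are uniform in $\theta$; this ultimately reduces to checking that the argument of Proposition~\ref{prop:positivity} is robust under positive rescaling of the fluxes, which it is.
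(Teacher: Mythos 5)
Your proof is correct and follows essentially the same route as the paper: a topological degree argument along a homotopy that scales the flux contribution to zero (your $\theta\in[0,1]$ is just a reparametrization of the paper's $s\in[0,\tau^n]$), with the uniform enclosure of zeros supplied by Proposition~\ref{prop:positivity} and Proposition~\ref{prop:bound.phi}. Your added remarks -- that the sign argument of Proposition~\ref{prop:positivity} is invariant under positive rescaling of the fluxes, and the explicit block-triangular Jacobian computation at the trivial endpoint -- merely make explicit what the paper leaves implicit.
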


\begin{proof}
    The proof is based on an inductive procedure in time. At each time step $n \geq 1$, we use a topological degree argument to show the existence of a solution $(U^n_K,\phi^n_K)_{K \in \Tt} \in (0,1)^{I \times \Tt} \times [-C,C]^\Tt$ to \eqref{eq:scheme.Poisson}--\eqref{eq:fvscheme_conserv}, with the fluxes defined as in \eqref{eq:scheme.Fi.3}. Because of the positivity of the discrete volume fractions stated in Proposition~\ref{prop:positivity} and to the equivalence between 
    \eqref{eq:scheme.Fi.3} and \eqref{eq:scheme.Fi} for such positive volume fractions, this ensures the existence for the original scheme \eqref{eq:scheme.Poisson}--\eqref{eq:scheme.Fi}.

    Let $n \geq 1$ be such that $(U^{n-1}_K,\phi^{n-1}_K)_{K \in \Tt} \in [0,1]^{I \times \Tt} \times [-C,C]^\Tt$ is given (that is the case for $n=1$, thanks to \eqref{eq:uiK0} and \eqref{eq:init}).
    The idea is to deform our non-linear system continuously until it is transformed into a linear one with known solutions. Therefore, let introduce a parameter $s \in [0, \tau^n]$ and define, for every $K\in\Tt$,
    \begin{equation}
    \label{eq:homotoy_def}
        \begin{cases}
    m_K \left(u_{i,K}^{(s)} - u_{i,K}^{n-1}\right) + s \sum_{\sigma\in\Ee_K} F_{i,K\sigma}^{(s)} = 0, \quad i = 1,\dots, I, \\[6pt]
    \lambda^2 \sum_{\sig \in \Ee_K} a_\sig \left( \phi_K^{(s)} - \phi_{K\sig}^{(s)}\right) = m_K\left( f_K + \sum_{i=1}^I z_i u_{i,K}^{(s)} \right),  \end{cases}
    \end{equation}        
    with $F_{i,K\sigma}^{(s)}$ defined by \eqref{eq:scheme.Fi.3}, which corresponds to the original schemes when $s= \tau^n$.
    For $s=0$, the two equations can be decoupled and the first one can be rewritten in matrix form
    \[\mathbb{M} (U^{(0)} - U^{n-1})=0,\]
    where $U^{(s)}=\left(U^{(s)}_K\right)_{K \in \Tt}$, for $s \in [0,\tau^n]$, $U^{n-1}=\left(U^{n-1}_K\right)_{K \in \Tt}$ and $\mathbb{M}=\text{diag}\left((m_K)_{K \in \Tt}\right)$ is a positive definite matrix. Therefore, there exists a unique solution $U^{(0)} = U^{n-1} \in [0,1]^{I \times \Tt}$. Via the Proposition~\ref{prop:bound.phi}, there also exists a unique $\phi^{(s)}=\left( \phi^{(s)}_K \right)_{K \in \Tt} \in [-C, C]^{\Tt}$. Moreover, thanks to the continuity of the discrete fluxes, the functional
    \begin{equation*}
         \mathcal{S} \; : \; \begin{cases}
            [0,\tau^n] \times [-2,2]^{I \times \Tt} \times [-C-1,C+1]^\Tt \to \R^\Tt \times \R^\Tt \\
            (s, U, \phi) \mapsto  \mathcal{S}(s, U, \phi)
        \end{cases}
    \end{equation*}
    whose zeros $(s, U^{(s)}, \phi^{(s)})$ are the solutions of \eqref{eq:homotoy_def}, is hence an homotopy. Furthermore, all along it, its zeros $(U^{(s)}, \phi^{(s)})$ remain inside the compact subset $[0,1]^{I \times \Tt} \times [-C,C]^\Tt$. 

    Thus, the topological degree corresponding to $\mathcal{S}(s, U, \phi)=0$ is equal to one, all along the homotopy, and hence in particular for $s=\tau^n$. That implies the existence of a solution to the scheme (but it does not guarantee uniqueness).
\end{proof}
We note that the proof proposed here is simpler than that found in, for example, ~\cite[Proposition 3.2]{CCFG21}. The reason lies in the fact that we did not use an entropy inequality to derive the uniform estimates on volume fractions and, therefore, we do not have to guarantee the validity of it during the homotopy.

\begin{rema}[On the uniqueness of solutions] 
The uniqueness of weak solutions for \eqref{eq:contmodel_conserv}--\eqref{eq:init}, as well as for numerical schemes found in the literature, remains largely unresolved. This challenge is inherent to the cross-diffusion structure of the problem. In \cite{GJ18}, uniqueness is established under strong assumptions on the diffusion coefficients and valences, specifically $D_i \equiv D$ and $z_i \equiv z$ for all $i = 1, \dots, I$.

For the numerical scheme \eqref{eq:fvscheme_conserv}--\eqref{eq:uiK0}, by following the argument in \cite[Section 3.2, Uniqueness of solutions]{CCGJ19}, uniqueness of discrete solutions can also be shown under the same assumptions: constant diffusion coefficients $D_i \equiv D$ and the absence of a drift term in the fluxes. The assumption of constant diffusion coefficients enables a preliminary proof of uniqueness for the discrete variable $u_0$. In our scheme, since no drift term is present and $\B(0) = 1$, the function $\B$ does not appear in the definition of the fluxes. Consequently, the equation for the discrete $u_0$ is significantly simpler than that in \cite{CCGJ19}, and uniqueness of $u_0$ follows straightforwardly. The absence of a drift term also allows the application of Gajewski’s entropy method to deduce uniqueness for the remaining components $(u_i)_{1 \leq i \leq I}$, as in \cite{CCGJ19}.

\end{rema}

\subsection{Energy dissipation at the discrete level}\label{ssec:NRG}
The next proposition is about the thermodynamical consistency of our scheme and the decay of a discrete counterpart of the free energy. 
\begin{prop}[Free energy decay]\label{prop:NRG}
Let $\left(U_K^n, \phi_K^n \right)_{K\in\Tt,n\geq 1}$ be a solution to the scheme \eqref{eq:scheme.Poisson}--\eqref{eq:u0Kn} as in Proposition~\ref{prop:existence}, then define for $n\geq 0$ the discrete free energy at the $n^\text{th}$ time step 
\begin{equation}\label{eq:free_energ_discr}
\Hh_\Tt^n = \sum_{K\in\Tt} m_K H(U_K^n) + \frac{\lambda^2}2 \sum_{\sig \in \Ee} a_\sig (\phi_K^n - \phi_{K\sig}^n)^2 - \lambda^2 \sum_{\sig \in \Ee^D} a_\sig \phi_\sig^D 
(\phi_\sig^D - \phi_K^n), 
\end{equation}
the discrete electrochemical potentials  $\mu_{i,K}^n = \log\left(\frac{u_{i,K}^n}{u_{0,K}^n}\right) + z_i \phi_K^n$ of species $i$, 
and 
\[
\Dd_\Tt^n = \sum_{i=1}^I \sum_{\sig \in \Ee_{\rm int}}F_{i,K\sig}^n (\mu_{i,K}^n - \mu_{i,L}^n)
\]
the discrete dissipation, which vanishes if and only if ${(\mu_{i,K}^n)}_{K\in\Tt}$ is constant for all $i=1, \dots, I$. 
 Then there holds
\be\label{eq:NRG.1}
\Hh_\Tt^n + \tau^n \Dd_\Tt^n  \leq \Hh_\Tt^{n-1}, \qquad n \geq 1.
\ee
\end{prop}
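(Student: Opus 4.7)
\medskip

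\noindent\textbf{Proof plan for Proposition~\ref{prop:NRG}.}
My plan is to mimic the continuous derivation of~\eqref{eq:EDE}: multiply the discrete conservation law by the electrochemical potential $\mu_{i,K}^n$, sum by parts, and absorb the electric contribution via the discrete Poisson equation~\eqref{eq:scheme.Poisson}. The derivation will rest on two convexity inequalities (for $H$ and for $x\mapsto x^2$) used in a backward-Euler style: if $F$ is convex and smooth enough, then $F(a)-F(b)\le F'(a)(a-b)$.

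First, I would treat the entropy contribution. Since $H$ defined in~\eqref{eq:H} is convex on $\Aa$ with $\partial_{u_i}H(U)=\log(u_i/u_0)=\mu_i-z_i\phi$ (where $\mu_i$ is to be understood componentwise), convexity applied cellwise gives
\[
\sum_{K\in\Tt}m_K\bigl[H(U_K^n)-H(U_K^{n-1})\bigr]\;\le\;\sum_{K\in\Tt}m_K\sum_{i=1}^I \mu_{i,K}^n(u_{i,K}^n-u_{i,K}^{n-1}) - \sum_{K\in\Tt}m_K\,\phi_K^n\sum_{i=1}^I z_i(u_{i,K}^n-u_{i,K}^{n-1}).
\]
Plugging the scheme~\eqref{eq:fvscheme_conserv} into the first sum on the right and using the no-flux boundary condition to reorganize the cell sum into an edge sum produces exactly $-\tau^n\Dd_\Tt^n$.

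Second, I would handle the electric contribution. Using the discrete Poisson equation~\eqref{eq:scheme.Poisson} at the two time levels $n$ and $n-1$ (the time-independent $f_K$ cancels), the second term on the right above rewrites as
\[
-\sum_{K\in\Tt}m_K\,\phi_K^n\sum_{i}z_i(u_{i,K}^n-u_{i,K}^{n-1}) = -\lambda^2\sum_{K\in\Tt}\phi_K^n\sum_{\sig\in\Ee_K}a_\sig\bigl[(\phi_K^n-\phi_{K\sig}^n)-(\phi_K^{n-1}-\phi_{K\sig}^{n-1})\bigr].
\]
A discrete integration by parts splits this sum into three pieces: interior edges (each counted once with a $\phi_K^n-\phi_L^n$ factor), Neumann edges (vanishing since $\phi_K^n-\phi_{K\sig}^n=0$), and Dirichlet edges (for which $\phi_{K\sig}^n=\phi_{K\sig}^{n-1}=\phi_\sig^D$ and the contribution is $a_\sig\phi_K^n(\phi_K^n-\phi_K^{n-1})$). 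Applying the convexity inequality $a(a-b)\ge\tfrac12(a^2-b^2)$ on interior edges and the same inequality on Dirichlet edges after writing $\phi_K^n=(\phi_K^n-\phi_\sig^D)+\phi_\sig^D$ yields
\[
-\sum_{K}m_K\phi_K^n\sum_{i}z_i(u_{i,K}^n-u_{i,K}^{n-1}) \le -\frac{\lambda^2}{2}\sum_{\sig\in\Ee}a_\sig\bigl[(\phi_K^n-\phi_{K\sig}^n)^2-(\phi_K^{n-1}-\phi_{K\sig}^{n-1})^2\bigr] - \lambda^2\sum_{\sig\in\Ee^D}a_\sig\phi_\sig^D(\phi_K^n-\phi_K^{n-1}),
\]
and the right-hand side is precisely the negative of the electric part of $\Hh_\Tt^n-\Hh_\Tt^{n-1}$ in~\eqref{eq:free_energ_discr}. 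Combining with the entropy estimate gives $\Hh_\Tt^n-\Hh_\Tt^{n-1}\le -\tau^n\Dd_\Tt^n$, which is~\eqref{eq:NRG.1}.

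Finally, to see that $\Dd_\Tt^n\ge 0$ with equality characterizing the stationary situation described in the statement, I would use the Slotboom reformulation~\eqref{eq:scheme.Fi.2}: since $\mu_{i,K}^n=\log w_{i,K}^n$ and $\mathcal{M}$ is positive, each edge contribution $a_\sig D_i u_{0,K}^n u_{0,L}^n\,\mathcal{M}(\cdot,\cdot)\,(w_{i,K}^n-w_{i,L}^n)(\log w_{i,K}^n-\log w_{i,L}^n)$ is non-negative by monotonicity of $\log$, and vanishes precisely when $\mu_{i,K}^n=\mu_{i,L}^n$ for every interior face. I expect the main obstacle to be the bookkeeping of the Dirichlet boundary contribution: the role of the explicit boundary term $-\lambda^2\sum_{\sig\in\Ee^D}a_\sig\phi_\sig^D(\phi_\sig^D-\phi_K^n)$ in~\eqref{eq:free_energ_discr} is exactly to compensate the spurious $a_\sig\phi_\sig^D(\phi_K^n-\phi_K^{n-1})$ residue appearing after the discrete integration by parts, and checking this cancellation cleanly is the most delicate point of the argument.
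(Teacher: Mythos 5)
Your proposal is correct and follows essentially the same route as the paper: multiply the discrete conservation law by $\mu_{i,K}^n$, integrate by parts discretely to produce $-\tau^n\Dd_\Tt^n$, use convexity of $H$ for the entropy part, substitute the discrete Poisson equation at the two time levels for the electric part, and conclude with $a(a-b)\ge\tfrac12(a^2-b^2)$ together with the Slotboom reformulation~\eqref{eq:scheme.Fi.2} for the sign of the dissipation. The only cosmetic difference is that the paper first records the exact identity $\Aa_\Tt^n+\Bb_\Tt^n+\tau^n\Dd_\Tt^n=0$ and then bounds $\Aa_\Tt^n$ and $\Bb_\Tt^n$ from below, whereas you apply the convexity inequalities before substituting the scheme; your bookkeeping of the Dirichlet boundary term matches the paper's.
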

\begin{proof}
As a consequence of the positivity of $u_{0,K}^n$ and of the monotonicity of the exponential function, one then easily infers from reformulation~\eqref{eq:scheme.Fi.2} of the discrete fluxes that 
\be\label{eq:Disign}
\Dd_{i,\sig}^n :=F_{i,K\sigma}^n(\mu_{i,K}^n - \mu_{i,L}^n) \geq 0, \qquad \forall i= 1,\dots, I, \; \sig = K|L \in \Ee_{{\rm int}},
\ee
whence the non-negativity of $\Dd^n$. As $u_{0,K}^n >0$ for all $K\in\Tt$ and $n\geq 1$, and as $y\mapsto e^y$ is strictly increasing, one gets that $\Dd_{i,\sig}^n = 0$ iff $\mu_{i,K}^n = \mu_{i,L}^n$.

To proceed, we first define by $\mu_{i,K}^n = \log\left(\frac{u_{i,K}^n}{u_{0,K}^n}\right) + z_i \phi_K^n = \log(w_{i,K}^n)$ the electrochemical potential of species $i$, then we multiply
the discrete conservation law~\eqref{eq:fvscheme_conserv} by $\tau^n \mu_{i,K}^n$ and sum over $i=1,\dots, I$ and $K\in\Tt$. Thanks to the local conservativity of the fluxes, i.e., $F_{i,K\sig}^n + F_{i,L\sig}^n = 0$ for all $\sig = K|L \in \Ee_{\rm int}$, and to the no-flux boundary condition $F_{i,K\sig}^n = 0$ for $\sigma\in \Ee_\text{ext}$, $i = 1, \dots, I$, a discrete integration by parts in space allows the flux terms to be rewritten as a sum over interior edges only, namely $ \sum_{K\in\Tt} \sum_{\sigma\in\Ee_K} F_{i,K\sigma}^n \mu_{i,K}^n = \sum_{\sig \in \Ee_{\rm int}} \Dd_{i,\sig}^n$ for all $i = 1, \dots, I$. This leads to the identity
\be\label{eq:ABDn}
\Aa_\Tt^n + \Bb_\Tt^n + \tau^n \Dd_\Tt^n = 0.
\ee
where we have set
\begin{align*}
\Aa_\Tt^n =& \;\sum_{i=1}^I \sum_{K\in\Tt} \left(u_{i,K}^n - u_{i,K}^{n-1}\right) \log\left(\frac{u_{i,K}^n}{u_{0,K}^n}\right)m_K \\
\overset{\eqref{eq:u0Kn}}= &  \sum_{i=0}^I \sum_{K\in\Tt} \left(u_{i,K}^n - u_{i,K}^{n-1}\right) \frac{\partial \widetilde H}{\partial u_i} (u_{0,K}^n \dots, u_{I,K}^n) m_K ,
\end{align*}
where $\widetilde H : \left\{ (u_i)_{0\leq i\leq I}  \; \middle| \; \sum_{i=0}^I u_i =1\right\} \to [-\log(I+1),+\infty)$, defined by 
\[
\widetilde H(u_0,\dots,u_I):= \sum_{i=0}^I u_i\log(u_i),\] and
\begin{align*}
\Bb_\Tt^n = & \; \sum_{i=1}^I \sum_{K\in\Tt} \left(u_{i,K}^n - u_{i,K}^{n-1}\right) z_i \phi_K^n m_K \\\overset{\eqref{eq:scheme.Poisson}}=&
\lambda^2 \sum_{K\in\Tt} \phi_K^n \sum_{\sig \in \Ee_K} a_\sig \left(\phi_K^n - \phi_K^{n-1} - (\phi_{K\sig}^n - \phi_{K\sig}^{n-1})\right).
\end{align*}
From the convexity of $\widetilde H$ and the identity $\widetilde H(u_0,\dots,u_I)= H(U)$, we deduce that
\be\label{eq:An}
\Aa_\Tt^n\geq \sum_{K\in\Tt}\left( H(U_K^n) - H(U_K^{n-1}) \right) m_K,
\ee
while the term $\Bb^n_\Tt$ can be rewritten by observing that each interior edge is counted twice in the sum $\sum_{K \in \Tt} \sum_{\sig \in \Ee_K}$. The resulting equivalent reformulation is
\begin{align*}
\Bb_\Tt^n =&\; \lambda^2  \sum_{\sig \in \Ee} a_\sig \left(\phi_K^n - \phi_K^{n-1} - (\phi_{K\sig}^n - \phi_{K\sig}^{n-1})\right)(\phi_K^n - \phi_{K\sig}^n)
\\&\phantom{xxxxx} \;+ \lambda^2 \sum_{\sig \in \Ee^D} a_\sig \phi_\sig^D 
(\phi_K^n - \phi_{K}^{n-1}),
\end{align*}
which is valid since boundary edges $\sig \in \Ee^N$ do not contribute to the first sum, and the contributions from boundary edges $\sig \in \Ee^D$ cancel with the second part of the new expression of $\Bb_\Tt^n$.
Then using the elementary convexity inequality $a(a-b) \geq (a^2 - b^2)/2$ in the above term and combining the result with~\eqref{eq:An} in \eqref{eq:ABDn} provides the desired result~\eqref{eq:NRG.1}.
\end{proof}

Proposition~\ref{prop:NRG} allows us to complete the proof of Theorem~\ref{theo:main.1}, but it also contains important information for proving the convergence of the scheme. It is the purpose of the next section to extract this information.

\subsection{Further uniform estimates on the discrete solution}\label{ssec:estimates++} 
To pass to the limit in the scheme and to identify the limit as a weak solution, we need to extract some further uniform estimates, in particular the discrete $L^2_{\rm loc}(H^1)$ estimates on the discrete counterparts of $u_0$ and $\sqrt{u_i u_0}$. {We prove these estimates in Lemma~\ref{lem:L2locH1}. As an intermediate result we need a uniform bound on the discrete free energy. }

\begin{lem}[Uniform bound for the free energy]\label{lem:boundentropy}
{There exists $C>0$ depending only on $\O$, $\phi^D$, $\lambda$, $f$, ${(z_i)}_i$, and $\zeta_\Tt$ such that, for all $N\geq 1$, there holds 
$
|\Hh_\Tt^N|\leq C
$.}
\end{lem}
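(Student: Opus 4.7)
My plan is to rewrite the free energy~\eqref{eq:free_energ_discr} in an equivalent form where each contribution can be controlled directly by the uniform estimates already at hand, namely the $L^\infty$ and discrete $H^1$ bounds on the electric potential from Proposition~\ref{prop:bound.phi}, combined with the trivial bounds $0 \leq u_{i,K}^n \leq 1$. A direct bound on the Dirichlet contribution $T_3 := -\lambda^2 \sum_{\sig\in\Ee^D} a_\sig \phi_\sig^D(\phi_\sig^D - \phi_K^n)$ via Cauchy--Schwarz would force us to control the geometrically awkward sum $\sum_{\sig\in\Ee^D} a_\sig(\phi_\sig^D)^2$, which is not bounded uniformly in the mesh size by $\zeta_\Tt$ alone, so I would circumvent this issue by using the discrete Poisson equation itself to eliminate the troublesome boundary term.

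More precisely, I would first multiply the discrete Poisson equation~\eqref{eq:scheme.Poisson} by $\phi_K^n$, sum over $K \in \Tt$, and perform a discrete integration by parts. Interior edges pair up into $a_\sig(\phi_K^n - \phi_L^n)^2$, Neumann edges contribute nothing since $\phi_{K\sig}^n = \phi_K^n$, and each Dirichlet edge yields $a_\sig \phi_K^n(\phi_K^n - \phi_\sig^D)$. Decomposing
\[
\phi_K^n(\phi_K^n - \phi_\sig^D) = (\phi_K^n - \phi_\sig^D)^2 + \phi_\sig^D(\phi_K^n - \phi_\sig^D)
\]
and identifying the electrostatic and Dirichlet parts of~\eqref{eq:free_energ_discr} as $T_2$ and $T_3$ respectively, I obtain the key identity
\[
2T_2 + T_3 \;=\; \sum_{K\in\Tt} m_K\, \phi_K^n \left( f_K + \sum_{i=1}^I z_i u_{i,K}^n \right),
\]
so that the free energy admits the alternative representation
\[
\Hh_\Tt^n = \sum_{K\in\Tt} m_K H(U_K^n) \;-\; \frac{\lambda^2}{2} \sum_{\sig\in\Ee} a_\sig (\phi_K^n - \phi_{K\sig}^n)^2 \;+\; \sum_{K\in\Tt} m_K\, \phi_K^n \left( f_K + \sum_{i=1}^I z_i u_{i,K}^n \right).
\]

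In this representation each term is controlled uniformly in $n$: (i) the mixing entropy satisfies $|\sum_K m_K H(U_K^n)| \leq |\O|\log(I+1)$ since $H$ maps $\Aa$ into $[-\log(I+1),0]$; (ii) the squared discrete $H^1$ seminorm $\sum_\sig a_\sig(\phi_K^n - \phi_{K\sig}^n)^2$ is bounded via Proposition~\ref{prop:bound.phi}; (iii) the cross-term is bounded by $\|\phi^n\|_{L^\infty(\O)} \cdot |\O| (\|f\|_{L^\infty} + \sum_{i=1}^I |z_i|)$ using once more the $L^\infty$ estimate of Proposition~\ref{prop:bound.phi} together with $u_{i,K}^n \in [0,1]$ provided by Proposition~\ref{prop:positivity} and~\eqref{eq:u0Kn}. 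Combining these three estimates yields $|\Hh_\Tt^N| \leq C$ with $C$ depending only on $\O, \phi^D, \lambda, f, (z_i)_i$ and $\zeta_\Tt$, as required.

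The only mildly delicate point is the careful bookkeeping of interior, Neumann, and Dirichlet edges when performing the summation by parts of the first step; apart from this, the lemma reduces entirely to a priori bounds already established and does not even require invoking the energy decay~\eqref{eq:NRG.1}.
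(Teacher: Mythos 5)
Your proof is correct, and it takes a genuinely different route from the paper's. Both arguments hinge on the same underlying idea -- the troublesome Dirichlet sum $\sum_{\sig\in\Ee^D} a_\sig \phi_\sig^D(\phi_\sig^D-\phi_K^n)$ cannot be bounded by brute force (as you rightly observe, $\sum_{\sig\in\Ee^D}a_\sig(\phi_\sig^D)^2$ scales like $h_\Tt^{-1}$), so one must use the discrete Poisson equation~\eqref{eq:scheme.Poisson} to trade it for a volume term. The difference lies in the test function: the paper introduces the cell averages $\phi_K^D$ of the Dirichlet datum, reorganizes the boundary sum into $\sum_{\sig\in\Ee}a_\sig(\phi_\sig^D-\phi_K^D)(\phi_K^n-\phi_{K\sig}^n)$ plus the scheme tested against $(\phi_K^D)_K$, and then needs Young's inequality together with Lemma~9.4 of~\cite{Eymard2000} to control $\sum_{\sig}a_\sig(\phi_\sig^D-\phi_K^D)^2$ by $C(\zeta_\Tt)\|\grad\phi^D\|_{L^2(\O)}^2$. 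You instead test the scheme against $(\phi_K^n)_K$ itself; the resulting identity $2T_2+T_3=\sum_K m_K\phi_K^n\big(f_K+\sum_i z_iu_{i,K}^n\big)$ (which I checked: interior edges pair into squares, Neumann edges vanish, and your splitting of the Dirichlet contribution is exact) absorbs the boundary term completely and merely flips the sign of the quadratic electrostatic term, which is harmless since Proposition~\ref{prop:bound.phi} bounds it anyway. Your route is arguably more economical -- it dispenses with the auxiliary averages $\phi_K^D$ and with Lemma~9.4 of~\cite{Eymard2000} -- at the cost of leaning entirely on the discrete $L^\infty$ and $H^1$ bounds of Proposition~\ref{prop:bound.phi} (where the dependence on $\phi^D$ re-enters the constant). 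The remaining estimates (i)--(iii) and the resulting dependence of $C$ on $\O$, $\phi^D$, $\lambda$, $f$, $(z_i)_i$, $\zeta_\Tt$ are all in order, and you are right that the energy decay~\eqref{eq:NRG.1} is not needed here.
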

\begin{proof}
Because of the bound $0\leq u_{i,K}^n \leq 1$ for all $i$ and $K$, it is clear that the first two contributions of~\eqref{eq:free_energ_discr} remain uniformly bounded by a constant depending only on $\O$. Indeed, for the first term, this is a direct consequence of the definition~\eqref{eq:H} of $H$, whereas the second term can be controlled thanks to Proposition~\ref{prop:bound.phi}.  Concerning the last contribution observe that if one defines $\phi_K^D$ and $\phi_\sigma^D$ as the averages of $\phi^D$ on $K\in\Tt$ and $\sigma\in\Ee$ respectively, then a suitable reorganization of the sum shows that 
\begin{multline}\label{eq:boundentropy.1}
  \sum_{\sig \in \Ee^D} a_\sig \phi_\sig^D 
(\phi_K^n - \phi_\sig^D) =   \sum_{\sig \in \Ee} a_\sig (\phi_\sig^D - \phi_K^D)
(\phi_K^n - \phi_{K\sig}^n) + \sum_{K\in\Tt}\phi_K^D\sum_{\sig \in \Ee_K} a_\sig 
(\phi_K^n - \phi_{K\sig}^n) \\ 
\overset{\eqref{eq:scheme.Poisson}}{=} 
 \sum_{\sig \in \Ee} a_\sig (\phi_\sig^D - \phi_K^D)
(\phi_K^n - \phi_{K\sig}^n) + \frac1{\lambda^2}  \sum_{K\in\Tt} m_K \phi_K^D \left(f_K + \sum_{i=1}^I z_i u_{i,K}^n\right).
\end{multline}
Using Young's inequality and $\Ee^D \subset \Ee$ for the first term in the above right-hand side gives 
\[
 \sum_{K\in\Tt}\sum_{\sig \in \Ee^D} a_\sig (\phi_\sig^D - \phi_K^D)
(\phi_K^n - \phi_{K\sig}^n) \leq  \sum_{\sig \in \Ee} a_\sig (\phi_\sig^D - \phi_K^D)^2 
+ \frac14 \sum_{\sig \in \Ee} a_\sig (\phi_K^n - \phi_{K\sig}^n)^2.
\]
Then Lemma 9.4 of~\cite{Eymard2000} shows that 
\[
 \sum_{K\in\Tt}\sum_{\sig \in \Ee^D} a_\sig (\phi_\sig^D - \phi_K^D)^2 \leq C(\zeta_\Tt) \|\grad \phi^D\|_{L^2(\O)}^2.
\]
Then since $f$ and $U$ are bounded, we deduce from~\eqref{eq:boundentropy.1} that there exists $C$ depending only on 
$\|f\|_\infty$, $\max_i |z_i|$, $\O$, $\| \phi^D\|_{H^1}$, $\lambda$, and $\zeta_\Tt$ such that 
\[
  \sum_{\sig \in \Ee^D} a_\sig \phi_\sig^D (\phi_K^n - \phi_\sig^D) \leq C. 
\]
The result of the lemma follows. 
\end{proof}

The next lemma shows that the control of the energy dissipation $\sum_{n\geq 1} \tau^n \Dd_\Tt^n$ inferred from Proposition~\ref{prop:NRG} gives some $L^2_{\rm loc}(H^1)$ type control on the discrete counterparts of $u_0$, $\sqrt{u_0}$, and $\sqrt{u_iu_0}$. 

\begin{lem}[Uniform bounds on $u_0$, $\sqrt{u_0}$, and $\sqrt{u_iu_0}$]\label{lem:L2locH1}
There exists $C>0$ depending only on {$\O$}, $\phi^D$, $\lambda$, $f$, ${(z_i)}_i$,  ${(D_i)}_i$, and $\zeta_\Tt$ such that, for all $N\geq 1$, there holds
\begin{multline}\label{eq:L2locH1.1}
\sum_{n=1}^N \tau^n \sum_{i=1}^{I} \sum_{\sig \in \Ee_{\rm int}} a_\sig \left(\sqrt{u_{i,K}^n u_{0,K}^n}-\sqrt{u_{i,L}^n u_{0,L}^n}\right)^2 
\\+ \sum_{n=1}^N \tau^n \sum_{\sig \in \Ee_{\rm int}} a_\sig \left(\sqrt{u_{0,K}^n} - \sqrt{u_{0,L}^n}\right)^2 
\\
+ \sum_{n=1}^N \tau^n \sum_{\sig \in \Ee_{\rm int}} a_\sig \left(u_{0,K}^n - u_{0,L}^n\right)^2 
\leq C(1+t^N).\end{multline}
As a consequence, one also has 
\be\label{eq:L2locH1.2}
\sum_{n=1}^N \tau^n \sum_{i=1}^{I} \sum_{\sig \in \Ee_{\rm int}} a_\sig \left(u_{i,K}^n \sqrt{u_{0,K}^n}-u_{i,L}^n\sqrt{u_{0,L}^n}\right)^2 \leq C(1+t^N)
\ee
and 
\be\label{eq:L2locH1.4}
\sum_{n=1}^N \tau^n \sum_{i=1}^{I} \sum_{\sig \in \Ee_{\rm int}} a_\sig \left(u_{i,K}^n u_{0,K}^n-u_{i,L}^nu_{0,L}^n\right)^2 \leq C(1+t^N).
\ee
\end{lem}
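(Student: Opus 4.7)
The plan is to combine the free-energy--dissipation inequality~\eqref{eq:NRG.1} of Proposition~\ref{prop:NRG} with the uniform bound on the free energy granted by Lemma~\ref{lem:boundentropy} so as to get control of $\sum_{n=1}^N \tau^n \Dd_\Tt^n$ that is uniform in $N$. To turn this into exploitable $L^2_{\rm loc}(H^1)$-type bounds, I would first use the Slotboom reformulation~\eqref{eq:scheme.Fi.2} together with the elementary inequality $(a-b)(\log a - \log b) \geq 4(\sqrt a - \sqrt b)^2$ and the lower bound $\M(e^{-z_i\phi_K^n},e^{-z_i\phi_L^n})\geq c_0 >0$ that follows from the $L^\infty$-bound on $\phi$ in Proposition~\ref{prop:bound.phi}. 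This yields the fundamental a priori estimate
\[
\sum_{n=1}^N \tau^n \sum_{i=1}^I \sum_{\sig\in\Ee_{\rm int}} a_\sig u_{0,K}^n u_{0,L}^n \bigl(\sqrt{w_{i,K}^n}-\sqrt{w_{i,L}^n}\bigr)^2 \leq C,
\]
uniformly in $N$, from which all three estimates will be derived.

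For the middle estimate in~\eqref{eq:L2locH1.1}, the crucial tool is the algebraic identity $1/u_{0,K}^n = 1 + \sum_{i=1}^I v_{i,K}^{n,2}$ with $v_{i,K}^n := \sqrt{u_{i,K}^n/u_{0,K}^n} = \sqrt{w_{i,K}^n}e^{-z_i\phi_K^n/2}$, which is the discrete counterpart of the trick used in~\cite{GJ18}. Expanding $1/\sqrt{u_{0,K}^n}-1/\sqrt{u_{0,L}^n} = \sqrt{1+\sum v_{i,K}^{n,2}} - \sqrt{1+\sum v_{i,L}^{n,2}}$ as a difference of square roots and applying discrete Cauchy--Schwarz yields
\[
(\sqrt{u_{0,K}^n}-\sqrt{u_{0,L}^n})^2 = u_{0,K}^n u_{0,L}^n\Bigl(\tfrac{1}{\sqrt{u_{0,K}^n}}-\tfrac{1}{\sqrt{u_{0,L}^n}}\Bigr)^2 \leq 2\,u_{0,K}^n u_{0,L}^n\sum_{i=1}^I (v_{i,K}^n-v_{i,L}^n)^2.
\]
A further telescoping of $v_{i,K}^n - v_{i,L}^n$, coupled with the Lipschitzness of $y\mapsto e^{-z_iy/2}$ on bounded intervals, decomposes the right-hand side into the fundamental dissipation term plus one of the form $u_{0,K}^n u_{0,L}^n w_{i,K}^n (\phi_K^n-\phi_L^n)^2$. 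The combination $u_{0,K}^n u_{0,L}^n w_{i,K}^n = u_{0,L}^n u_{i,K}^n e^{z_i\phi_K^n}$ is bounded by a constant, and Proposition~\ref{prop:bound.phi} controls $\sum_\sig a_\sig(\phi_K^n-\phi_L^n)^2$; summing in time produces the $(1+t^N)$ factor. The third estimate in~\eqref{eq:L2locH1.1} is then immediate from $(u_{0,K}^n-u_{0,L}^n)^2 \leq 4(\sqrt{u_{0,K}^n}-\sqrt{u_{0,L}^n})^2$.

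The first estimate in~\eqref{eq:L2locH1.1} follows a similar route via the identity $\sqrt{u_iu_0}=u_0\sqrt{w_i}e^{-z_i\phi/2}$: I would peel off the exponential factor as above (contributing another $\phi$-gradient term controlled by Poisson) and reduce to bounding $(u_{0,K}^n\sqrt{w_{i,K}^n}-u_{0,L}^n\sqrt{w_{i,L}^n})^2$. This is handled by a symmetric telescoping: assuming without loss of generality $u_{0,L}^n\leq u_{0,K}^n$ gives $u_{0,L}^{n,2}\leq u_{0,K}^n u_{0,L}^n$ and $(u_{0,K}^n-u_{0,L}^n)^2 \leq 4u_{0,K}^n(\sqrt{u_{0,K}^n}-\sqrt{u_{0,L}^n})^2$, which, together with the bound $w_{i,K}^n u_{0,K}^n \leq C$, reduces everything to the fundamental dissipation estimate and the $\sqrt{u_0}$ bound established above. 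Estimate~\eqref{eq:L2locH1.4} then follows from the elementary $(u_{i,K}^n u_{0,K}^n - u_{i,L}^n u_{0,L}^n)^2 \leq 4(\sqrt{u_iu_0}_K^n-\sqrt{u_iu_0}_L^n)^2$. For~\eqref{eq:L2locH1.2}, I would write $u_i\sqrt{u_0} = \sqrt{u_iu_0}\cdot\sqrt{u_i}$ and use the identity $\sqrt{u_{0,L}^n}(\sqrt{u_{i,K}^n}-\sqrt{u_{i,L}^n}) = \sqrt{u_{i,K}^n u_{0,L}^n}-\sqrt{u_{i,L}^n u_{0,L}^n}$, then insert the intermediate value $\sqrt{u_{i,K}^n u_{0,K}^n}$ via a triangle inequality and bound the resulting terms by the $H^1$-estimates on $\sqrt{u_iu_0}$ and $\sqrt{u_0}$ just proven.

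The main obstacle is the cross-diffusion structure: the dissipation only controls the degenerate weighted quantity $u_{0,K}^n u_{0,L}^n (\sqrt{w_{i,K}^n}-\sqrt{w_{i,L}^n})^2$, whose prefactor vanishes wherever the solvent fraction is small, so one cannot extract individual $H^1$-type bounds directly. The key step that unlocks everything is the algebraic identity $1/u_0 = 1 + \sum v_i^2$, which effectively transfers the $u_0 u_0$ prefactor to the ``correct'' side of the estimate, as it emerges naturally when differentiating $1/\sqrt{u_0}$ rather than $\sqrt{u_0}$ itself. Once this is in place, the bookkeeping of the additional $\phi$-gradient contributions (which account for the linear-in-time growth $(1+t^N)$) and of the various telescopings reducing each remaining claim to the fundamental bound is technical but routine.
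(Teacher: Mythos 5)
Your proposal is correct, and it rests on the same foundation as the paper's proof: the telescoped dissipation bound $\sum_n \tau^n \Dd_\Tt^n \leq C$ obtained from~\eqref{eq:NRG.1} and Lemma~\ref{lem:boundentropy}, the Slotboom reformulation~\eqref{eq:scheme.Fi.2} combined with $(a-b)(\log a-\log b)\geq 4(\sqrt a-\sqrt b)^2$ and the uniform lower bound on the Stolarsky mean, and the $L^\infty$/discrete $H^1$ bounds on $\phi$ of Proposition~\ref{prop:bound.phi} accounting for the $(1+t^N)$ growth. Where you genuinely diverge is in the passage from the degenerate weighted quantity $u_{0,K}^n u_{0,L}^n(\sqrt{w_{i,K}^n}-\sqrt{w_{i,L}^n})^2$ to the three stated bounds. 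The paper keeps the two exponential weights inside the square, splits them into $\cosh$/$\sinh$ contributions to reach $(\sqrt{u_{i,K}^n u_{0,L}^n}-\sqrt{u_{i,L}^n u_{0,K}^n})^2$ up to a $(\phi_K^n-\phi_L^n)^2$ error, then invokes the algebraic identity~\eqref{eq:L2locH1.3} and sums over the species so that the cross terms $-(u_{i,K}^n-u_{i,L}^n)(u_{0,K}^n-u_{0,L}^n)$ collapse into $+(u_{0,K}^n-u_{0,L}^n)^2$; this delivers the $\sqrt{u_iu_0}$ and $u_0$ bounds simultaneously, and the $\sqrt{u_0}$ bound is only deduced afterwards via $\sum_i\sqrt{u_{i,K}^n u_{i,L}^n}\leq 1$. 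You instead attack $\sqrt{u_0}$ first and directly, through the identity $1/u_0 = 1+\sum_i v_i^2$ and the $1$-Lipschitz character of the Euclidean norm on $\R^{I+1}$ — the discrete transplant of the Gerstenmayer--J\"ungel trick — which cleanly transfers the $u_{0,K}^n u_{0,L}^n$ prefactor to the correct side; you then build the $u_0$ and $\sqrt{u_iu_0}$ bounds on top by per-edge telescopings with an ordering of $u_{0,K}^n$ versus $u_{0,L}^n$ chosen edge by edge (the bounds $u_{0,L}^{n,2}\leq u_{0,K}^nu_{0,L}^n$ and $w_{i,K}^nu_{0,K}^n\leq C$ are exactly what makes this work, and the symmetry of the squared target legitimizes the choice). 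Your derivations of~\eqref{eq:L2locH1.2} and~\eqref{eq:L2locH1.4} are essentially the paper's. What each approach buys: the paper's species summation is a compact piece of algebra that yields two estimates at once but is somewhat rigid (the cross terms must cancel exactly), while your route is more modular — each estimate is reduced separately to the fundamental dissipation bound — at the cost of the slightly more delicate per-edge bookkeeping and of establishing the $\sqrt{u_0}$ bound as a prerequisite for the $\sqrt{u_iu_0}$ one.
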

\begin{proof}
One gets from the elementary inequality $(a-b)(\log(a) - \log(b)) \geq 4 (\sqrt a - \sqrt b )^2$ applied to~\eqref{eq:Disign}  that 
\[
\Dd_{i,\sig}^n \geq 4 a_\sig D_i \Rf(e^{-z_i \phi_K^n}, e^{-z_i \phi_L^n})  \left( \sqrt{u_{i,K}^n u_{0,L}^n} e^{\frac{z_i}4 (\phi_K^n-\phi_L^n)} 
- \sqrt{u_{i,L}^n u_{0,K}^n} e^{\frac{z_i}4(\phi_L^n-\phi_K^n)} \right)^2
\]
with $\Rf(e^{-z_i \phi_K^n}, e^{-z_i \phi_L^n})  =\M(e^{-z_i \phi_K^n}, e^{-z_i \phi_L^n}) e^{\frac{z_i}2(\phi_K^n + \phi_L^n)}$.
Thanks to~\eqref{eq:phi.Linf} and since $D_i>0$ for all $i$, there holds 
\[
2 D_i \Rf(e^{-z_i \phi_K^n}, e^{-z_i \phi_L^n}) \geq \kappa 
\]
for some  $\kappa>0$ uniform w.r.t. $K$, $i$, and $n$. As a consequence, using furthermore that  $(a+b)^2 \geq \frac12 a^2 - b^2$, 
\begin{multline*}
\Dd_{i,\sig}^n \geq  \kappa a_\sig \cosh^2\left(\frac{z_i}4 (\phi_K^n-\phi_L^n)\right) \left(\sqrt{u_{i,K}^n u_{0,L}^n}-\sqrt{u_{i,L}^n u_{0,K}^n}\right)^2\\
-  2\kappa a_\sig  \left(\sqrt{u_{i,K}^n u_{0,L}^n}+\sqrt{u_{i,L}^n u_{0,K}^n} \right)^2 \sinh^2\left(\frac{z_i}4 (\phi_K^n-\phi_L^n)\right).
\end{multline*}
Since $|\phi_{K}^n|\leq C$ owing to \eqref{eq:phi.Linf}, one has $\sinh^2\left(\frac{z_i}4 (\phi_K^n-\phi_L^n)\right)\leq C (\phi_K^n-\phi_L^n)^2$. Using moreover that $0 < u_{i,K}^n, u_{0,K}^n < 1$ and that $\cosh(a) \geq 1$, one gets that 
\[
\Dd_{i,\sig}^n \geq  a_\sig \kappa  \left(\sqrt{u_{i,K}^n u_{0,L}^n}-\sqrt{u_{i,L}^n u_{0,K}^n}\right)^2 - C a_\sig  (\phi_K^n-\phi_L^n)^2.
\]
Since 
\be\label{eq:L2locH1.3}
 \left(\sqrt{u_{i,K}^n u_{0,L}^n}-\sqrt{u_{i,L}^n u_{0,K}^n}\right)^2 =  \left(\sqrt{u_{i,K}^n u_{0,K}^n}-\sqrt{u_{i,L}^n u_{0,L}^n}\right)^2 - (u_{i,K}^n - u_{i,L}^n)(u_{0,K}^n - u_{0,L}^n),
\ee
then summing over $i = 1,\dots, I$ and $\sig = K|L$ and using \eqref{eq:u0Kn} leads to 
\begin{multline*}
\Dd_\Tt^n \geq \kappa \sum_{i=1}^{I} \sum_{\sig \in \Ee_{\rm int}} a_\sig \left(\sqrt{u_{i,K}^n u_{0,K}^n}-\sqrt{u_{i,L}^n u_{0,L}^n}\right)^2 
\\
+ \kappa \sum_{\sig \in \Ee_{\rm int}} a_\sig \left(u_{0,K}^n - u_{0,L}^n\right)^2 
-C \sum_{\sig \in \Ee_{\rm int}} a_\sig (\phi_K^n - \phi_{K\sig}^n)^2.
\end{multline*}
Bearing in mind Proposition~\ref{prop:bound.phi}, we obtain that 
\be\label{eq:Dn.1}
\Dd_\Tt^n \geq \kappa \sum_{i=1}^{I} \sum_{\sig \in \Ee_{\rm int}} a_\sig \left(\sqrt{u_{i,K}^n u_{0,K}^n}-\sqrt{u_{i,L}^n u_{0,L}^n}\right)^2 
+ \kappa \sum_{\sig \in \Ee_{\rm int}} a_\sig \left(u_{0,K}^n - u_{0,L}^n\right)^2 
-C.
\ee
Moreover, 
since $\sqrt{ab} \leq (a+b)/2$ for all $a,b \geq 0$, there holds
\[
\sum^I_{i=0} \sqrt{u_{i,K}^n u_{i,L}^n} \leq \sum^I_{i=0} \frac{u_{i,K}^n +u_{i,L}^n}2 = 1, \qquad \forall \sigma = K|L \in \Ee_{\text{int}}. 
\] 
This provides
\begin{multline*}
 \sum_{i=1}^{I} \sum_{\sig \in \Ee_{\rm int}} a_\sig \left(\sqrt{u_{i,K}^n u_{0,K}^n}-\sqrt{u_{i,L}^n u_{0,L}^n}\right)^2 \\
 \geq   \sum_{\sig \in \Ee_{\rm int}} a_\sig \left( (1-u_{0,K}^n) u_{0,K}^n + (1-u_{0,L}^n) u_{0,L}^n - 2 (1-\sqrt{u_{0,K}^n u_{0,L}^n})\sqrt{u_{0,K}^n u_{0,L}^n}
 \right)
 \\
= \sum_{\sig \in \Ee_{\rm int}} a_\sig \left(\sqrt{u_{0,K}^n}-\sqrt{u_{0,L}^n}\right)^2 -  \sum_{\sig \in \Ee_{\rm int}} a_\sig \left({u_{0,K}^n}-{u_{0,L}^n}\right)^2, 
\end{multline*}
whence we also deduce that 
\[
\Dd_\Tt^n \geq \kappa  \sum_{\sig \in \Ee_{\rm int}} a_\sig \left(\sqrt{u_{0,K}^n}-\sqrt{u_{0,L}^n}\right)^2 - C. 
\]
{To deduce \eqref{eq:L2locH1.1}, it eventually remains to remark from \eqref{eq:NRG.1} and Lemma~\ref{lem:boundentropy} that there exists $C$ depending on none of $h$, $\Delta t$, $N$ nor on the initial data $U^0 = \left(u_i^0\right)_{1 \leq i \leq I}$ (provided it fulfills~\eqref{eq:init}) such that $\sum_{n=1}^N \tau^n \Dd_\Tt^n \leq C.$ Combining this with~\eqref{eq:Dn.1} yields~\eqref{eq:L2locH1.1}.}

To recover estimate~\eqref{eq:L2locH1.2}, remark that     \begin{align*}
        \Big(\sqrt{u^n_{0,K}}u^n_{i,K} -\sqrt{u^n_{0,L}}u^n_{i,L}\Big)  = \Bigg[&  \sqrt{u^n_{i,K}} \Big( \sqrt{u^n_{0,K} u^n_{i,K}} - \sqrt{u^n_{0,L} u^n_{i,L}}\Big) \\
        & \quad + u^n_{i,L} \Big( \sqrt{u^n_{0,K}} - \sqrt{u^n_{0,L}}\Big) \\
        & \quad +\sqrt{u^n_{i,L}} \Big( \sqrt{u^n_{0,L} u^n_{i,K}} - \sqrt{u^n_{0,K} u^n_{i,L}}\Big) \Bigg].
    \end{align*}
As $(a+b+c)^2 \leq 3(a^2 + b^2 + c^2)$, and since $0 \leq u_{i,K}^n \leq 1$, one gets that 
\begin{align*}
\frac13 \Big(\sqrt{u^n_{0,K}}u^n_{i,K} -\sqrt{u^n_{0,L}}u^n_{i,L}\Big)^2 \leq &\Big( \sqrt{u^n_{0,K} u^n_{i,K}} - \sqrt{u^n_{0,L} u^n_{i,L}}\Big)^2 \\ & + \Big( \sqrt{u^n_{0,K}} - \sqrt{u^n_{0,L}}\Big)^2 \\
& +  \Big( \sqrt{u^n_{0,L} u^n_{i,K}} - \sqrt{u^n_{0,K} u^n_{i,L}}\Big)^2.
\end{align*}
The two first terms in the right-hand side are controlled thanks to~\eqref{eq:L2locH1.1}, while, bearing in mind~\eqref{eq:L2locH1.3}, the third term can be overestimated as follows:
\begin{align*}
 \Big( \sqrt{u^n_{0,L} u^n_{i,K}} - \sqrt{u^n_{0,K} u^n_{i,L}}\Big)^2 \leq \; &\sum_{i=1}^I  \Big( \sqrt{u^n_{0,L} u^n_{i,K}} - \sqrt{u^n_{0,K} u^n_{i,L}}\Big)^2 \\
\leq\; & \sum_{i=1}^I  \Big( \sqrt{u^n_{0,K} u^n_{i,K}} - \sqrt{u^n_{0,L} u^n_{i,L}}\Big)^2 + (u_{0,K}^n - u_{0,L}^n)^2, 
\end{align*}
which again, after summation on $n=1,\dots, N$ and $\sig \in \Ee_{\rm int}$, can be controlled thanks to~\eqref{eq:L2locH1.1}.

Finally, to establish~\eqref{eq:L2locH1.4}, remark that 
\[
u_{i,K}^nu_{0,K}^n - u_{i,L}^nu_{0,L}^n = \left(\sqrt{u_{i,K}^nu_{0,K}^n} - \sqrt{u_{i,L}^nu_{0,L}^n}\right)
\left(\sqrt{u_{i,K}^nu_{0,K}^n} + \sqrt{u_{i,L}^nu_{0,L}^n}\right). 
\]
Since $0 \leq u_{0,K}^n , u_{i,K}^n \leq 1$, one gets that 
\[
\left(u_{i,K}^n u_{0,K}^n-u_{i,L}^nu_{0,L}^n\right)^2 \leq 4  \left(\sqrt{u_{i,K}^nu_{0,K}^n} - \sqrt{u_{i,L}^nu_{0,L}^n}\right)^2.
\]
Summing over $n=1,\dots, N$ and $\sig \in \Ee_{\rm int}$ and using~\eqref{eq:L2locH1.1} gives the desired result. 
\end{proof}

One also deduces the following discrete $L^2_{\rm loc}(L^2)^d$ estimates on the fluxes, which amount to some discrete $L^2_{\rm loc}(H^1)'$ estimate on time increments of the discrete counterpart to $\partial_t u_i$.
\begin{lem}[Uniform bounds on the fluxes]\label{lem:FiL2}
There exists $C$ depending only on $\O$, $\phi^D$, $\lambda$, $f$, ${(z_i)}_i$,  ${(D_i)}_i$, and $\zeta_\Tt$ such that 
\be\label{eq:FiL2}
\sum_{i = 1}^I \sum_{n=1}^N \tau^n \sum_{\sig \in \Ee_{\rm int}} \frac{d_\sig}{m_\sig} \left|F_{i,K\sig}^n \right|^2 
\leq C(1+t^N). 
\ee
\end{lem}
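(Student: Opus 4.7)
The plan is to dominate $|F_{i,K\sig}^n|^2$ pointwise by a multiple of the dissipation contribution $\Dd_{i,\sig}^n := F_{i,K\sig}^n (\mu_{i,K}^n - \mu_{i,L}^n)$, so that summation in $\sig$, $i$, and $n$ telescopes through the free-energy inequality~\eqref{eq:NRG.1} and is controlled by Lemma~\ref{lem:boundentropy}. The natural starting point is the Slotboom reformulation~\eqref{eq:scheme.Fi.2},
\[
F_{i,K\sig}^n = a_\sig D_i\, u_{0,K}^n u_{0,L}^n\, M_\sig^n\, (w_{i,K}^n - w_{i,L}^n), \qquad M_\sig^n := \M(e^{-z_i\phi_K^n},e^{-z_i\phi_L^n}),
\]
which exhibits the flux in a product form amenable to a logarithmic-mean argument.

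The core pointwise inequality comes from the classical bound $L(x,y) := (x-y)/(\log x - \log y) \leq \max(x,y)$ for $x,y>0$ distinct, equivalently
\(
(x-y)^2 \leq \max(x,y)\,(x-y)(\log x - \log y).
\)
Applied to $x=w_{i,K}^n$, $y=w_{i,L}^n$ (both positive by Proposition~\ref{prop:positivity}) and recalling $\mu_{i,K}^n - \mu_{i,L}^n = \log w_{i,K}^n - \log w_{i,L}^n$, this yields
\[
\frac{d_\sig}{m_\sig}|F_{i,K\sig}^n|^2 = \frac{1}{a_\sig}|F_{i,K\sig}^n|^2 \leq D_i\, u_{0,K}^n u_{0,L}^n\, M_\sig^n\, \max(w_{i,K}^n, w_{i,L}^n) \cdot \Dd_{i,\sig}^n.
\]

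The most delicate step is controlling the prefactor. The Slotboom variables $w_{i,K}^n = u_{i,K}^n e^{z_i\phi_K^n}/u_{0,K}^n$ are not uniformly bounded: they may blow up wherever $u_{0,K}^n$ or $u_{0,L}^n$ approaches zero, a feature inherent to the size-exclusion degeneracy. However, the product $u_{0,K}^n u_{0,L}^n$ standing in front cancels this singularity exactly, since
\[
u_{0,K}^n u_{0,L}^n\, w_{i,K}^n = u_{0,L}^n\, u_{i,K}^n\, e^{z_i \phi_K^n} \leq e^{|z_i|\,\|\phi\|_\infty},
\]
by the $L^\infty$ bounds $0 \leq u_{i,K}^n, u_{0,L}^n \leq 1$ and Proposition~\ref{prop:bound.phi}. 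The symmetric bound holds with $K$ and $L$ swapped, and $M_\sig^n$ is likewise dominated by $e^{|z_i|\,\|\phi\|_\infty}$, since the Stolarsky mean lies between its arguments. One thus obtains a constant $C$ depending only on the data and on $\zeta_\Tt$ such that $\tfrac{d_\sig}{m_\sig}|F_{i,K\sig}^n|^2 \leq C\, \Dd_{i,\sig}^n$ for every $\sig \in \Ee_{\rm int}$, $i$, $n$.

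Summing over $\sig\in\Ee_{\rm int}$ and $i=1,\dots,I$, multiplying by $\tau^n$ and summing over $n=1,\dots,N$, the right-hand side becomes $C\sum_{n=1}^N \tau^n \Dd_\Tt^n \leq C(\Hh_\Tt^0 - \Hh_\Tt^N)$ by telescoping in~\eqref{eq:NRG.1}, which is uniformly bounded by Lemma~\ref{lem:boundentropy}; in fact the argument even produces an $N$-independent constant, strictly stronger than the stated $C(1+t^N)$. The main obstacle is precisely the one identified above: that the blow-up of the Slotboom variables in degeneracy zones can be absorbed by the $u_{0,K}^n u_{0,L}^n$ prefactor provided by~\eqref{eq:scheme.Fi.2}; without the product form of that reformulation, the log-mean estimate would not close.
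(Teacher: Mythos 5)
Your proof is correct, but it takes a genuinely different route from the paper's. The paper splits $F_{i,K\sig}^n$ into a convective part (using $\B(-y)-\B(y)=y$), controlled by the discrete $L^\infty(H^1)$ bound on $\phi$ from Proposition~\ref{prop:bound.phi}, and a diffusive part (using $2\le \B(y)+\B(-y)\le 2+y^2/6$), controlled by the gradient-type estimates of Lemma~\ref{lem:L2locH1}; the convective contribution is what produces the $C\,t^N$ growth in~\eqref{eq:FiL2}. You instead work directly from the Slotboom form~\eqref{eq:scheme.Fi.2} and dominate $|F_{i,K\sig}^n|^2/a_\sig$ pointwise by $C\,\Dd_{i,\sig}^n$ via the logarithmic-mean inequality $L(x,y)\le\max(x,y)$, and you correctly handle the one delicate point, namely that the possible blow-up of the Slotboom variables where $u_0$ degenerates is exactly cancelled by the $u_{0,K}^n u_{0,L}^n$ prefactor (since $u_{0,K}^n u_{0,L}^n w_{i,K}^n = u_{0,L}^n u_{i,K}^n e^{z_i\phi_K^n}$ is bounded thanks to Proposition~\ref{prop:bound.phi}, and $\M$ lies between its arguments). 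Telescoping through~\eqref{eq:NRG.1} and invoking Lemma~\ref{lem:boundentropy} then gives a constant that is even uniform in $t^N$, strictly stronger than the stated bound, and with the correct list of dependencies. What your argument does not provide, and the paper's does, is the explicit convective/diffusive splitting $F_{i,K\sig}^n=F_{i,K\sig}^{\text{conv},n}+F_{i,K\sig}^{\text{diff},n}$, which is reused verbatim in the identification of the limit fluxes in Proposition~\ref{prop:identify}; this is the main payoff of the longer route taken in the paper.
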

\begin{proof}
To obtain discrete $L^2_{\rm loc}(L^2)^d$ estimates on the fluxes, we need to exploit the discrete uniform estimates we have for $(U_K^n,\phi_K^n)_{K \in \Tt, n \geq 1}.$ In this regard, we rewrite the fluxes in a different way before taking advantage of some properties of the Bernoulli function~\eqref{eq:B.SG}.
One splits the flux~\eqref{eq:scheme.Fi} into two parts corresponding to convection and diffusion respectively:
\[
F_{i,K\sig}^n = F_{i,K\sig}^{\text{conv},n} + F_{i,K\sig}^{\text{diff},n},
\]
with 
\begin{align*}
 F_{i,K\sig}^{\text{conv},n} =\;& a_\sig D_i \frac{u_{i,K}^n u_{0,L}^n + u_{i,L}^n u_{0,K}^n}2 \left[ \B\big(z_i(\phi_L^n- \phi_K^n)\big) -  \B\big(z_i(\phi_K^n- \phi_L^n)\big) \right],\\
  F_{i,K\sig}^{\text{diff},n} =\;& a_\sig D_i \frac{u_{i,K}^n u_{0,L}^n - u_{i,L}^n u_{0,K}^n}2 \left[ \B\big(z_i(\phi_L^n- \phi_K^n)\big) +  \B\big(z_i(\phi_K^n- \phi_L^n)\big) \right].
\end{align*}
The flux $(F_{i,K\sig}^n)_{\sig, n}$ is bounded in $L^2_{\rm loc}(L^2)^d$ in the sense of~\eqref{eq:FiL2} if both  $( F_{i,K\sig}^{\text{conv},n})_{\sig,n}$ and $( F_{i,K\sig}^{\text{diff},n})_{\sig,n}$ are. 

As $\B(-y) - \B(y) = y$, one gets that 
\be\label{eq:Fi.conv}
 F_{i,K\sig}^{\text{conv},n} = a_\sig D_i \frac{u_{i,K}^n u_{0,L}^n + u_{i,L}^n u_{0,K}^n}2 z_i(\phi_K^n- \phi_L^n).
\ee
The $L^2_{\rm loc}(L^2)^d$ character of the above expression directly follows from the uniform bound on $u_{i,K}^n$, $0 \leq i \leq I$ and from the discrete $L^\infty(H^1)$ bound on $(\phi_K^n)_{K,n}$ stated in Proposition~\ref{prop:bound.phi}.
Therefore, 
\[
\sum_{i = 1}^I \sum_{n=1}^N \tau^n \sum_{\sig \in \Ee_{\rm int}} \frac{d_\sig}{m_\sig} \left|F_{i,K\sig}^{\text{conv},n} \right|^2 
\leq C t^N. 
\]

Concerning the diffusive term, as $2 \leq \B(y) + \B(-y) = y \coth \frac{y}2 \leq 2 +\frac{y^2}6 $, one has  that 
\be\label{eq:Fi.diff}
  F_{i,K\sig}^{\text{diff},n} = a_\sig D_i  \left(u_{i,K}^n u_{0,L}^n - u_{i,L}^n u_{0,K}^n \right)
  \left(1 + \Oo \left( \left(\phi_K^n - \phi_L^n\right)^2 \right)\right).
\ee
From the discrete $L^\infty(H^1)$ bound on $(\phi_K^n)_{K,n}$, one can uniformly estimate the remainder. 
For the other term,
since 
$u_{i,K}^n u_{0,L}^n - u_{i,L}^n u_{0,K}^n = u_{i,K}^n  u_{0,K}^n - u_{i,L}^n u_{0,L}^n 
+ (u_{i,K}^n+u_{i,L}^n) (u_{0,L}^n - u_{0,K}^n),$ 
then 
\begin{eqnarray*}
    |u_{i,K}^n u_{0,L}^n - u_{i,L}^n u_{0,K}^n|^2 &\leq& C \left( |u_{i,K}^n  u_{0,K}^n - u_{i,L}^n u_{0,L}^n|^2 + |(u_{i,K}^n+u_{i,L}^n) (u_{0,L}^n - u_{0,K}^n)|^2 \right) \\
&\leq & C \left( \left|\sqrt{u_{i,K}^n  u_{0,K}^n} - \sqrt{u_{i,L}^n u_{0,L}^n}\right|^2 
+ |u_{0,L}^n - u_{0,K}^n|^2 \right), 
\end{eqnarray*}
so that we obtain
\begin{eqnarray*}
\sum_{i = 1}^I \sum_{n=1}^N \tau^n \sum_{\sig \in \Ee_{\rm int}} \frac{d_\sig}{m_\sig} \left|F_{i,K\sig}^{\text{diff},n} \right|^2 
&\leq& C \sum_{i = 1}^I \sum_{n=1}^N \tau^n \sum_{\sig \in \Ee_{\rm int}} a_\sig \left|\sqrt{u_{i,K}^n  u_{0,K}^n} - \sqrt{u_{i,L}^n u_{0,L}^n}\right|^2 \\
&& + C \sum_{i = 1}^I \sum_{n=1}^N \tau^n \sum_{\sig \in \Ee_{\rm int}} a_\sig |u_{0,L}^n - u_{0,K}^n|^2 ,
\end{eqnarray*}
thanks to the uniform bounds on $(U^n_K)_{K \in \Tt, n \leq 1}$.
Lemma~\ref{lem:L2locH1} provides the desired $L^2_{\rm loc}(L^2)$ bound on $  F_{i,K\sig}^{\text{diff},n}$, concluding the 
proof of  Lemma~\ref{lem:FiL2}.
\end{proof}

From Lemma~\ref{lem:FiL2}, we deduce the following discrete $L^2_{\rm loc}((H^1)')$ estimate: 
\begin{cor}\label{cor:L2H-1}
There exists $C$ depending only on $\O$, $\phi^D$, $\lambda$, $f$, ${(z_i)}_i$,  ${(D_i)}_i$, and $\zeta_\Tt$ such that, 
for all $i = 0, \dots, I$ and all $\varphi_{\Tt,\tau} = \sum_{K\in\Tt} \sum_{n=1}^N \varphi_K^n \mathbbm{1}_{(t^{n-1},t^n] \times K}$, one has 
\be\label{eq:L2H-1}
\sum_{n=1}^N \sum_{K\in\Tt} m_K(u_{i,K}^n - u_{i,K}^{n-1}) \varphi_K^n \leq C\left(1+t^N\right)^{1/2} \left( \sum_{n=1}^N \tau^n \sum_{\sig \in \Ee_{\rm int}} a_\sig \left(\varphi_K^n - \varphi_{K\sig}^n\right)^2 \right)^{1/2}.
\ee
\end{cor}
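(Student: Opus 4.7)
The plan is to combine the discrete conservation law with a standard duality / discrete integration-by-parts argument, after which Cauchy--Schwarz and Lemma~\ref{lem:FiL2} will deliver the result.

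First, for $i \in \{1,\dots,I\}$, I multiply the scheme~\eqref{eq:fvscheme_conserv} by $\varphi_K^n$ and sum over $K\in\Tt$ and $n\in\{1,\dots,N\}$ to obtain
\[
\sum_{n=1}^N \sum_{K\in\Tt} m_K (u_{i,K}^n - u_{i,K}^{n-1})\varphi_K^n = - \sum_{n=1}^N \tau^n \sum_{K\in\Tt} \sum_{\sigma \in \Ee_K} F_{i,K\sigma}^n \varphi_K^n.
\]
Using the local conservativity $F_{i,K\sigma}^n + F_{i,L\sigma}^n = 0$ for $\sigma = K|L \in \Ee_{\rm int}$, together with the no-flux boundary condition $F_{i,K\sigma}^n = 0$ for $\sigma \in \Ee_{\rm ext}$, a discrete integration by parts reorganizes the right-hand side as
\[
\sum_{n=1}^N \tau^n \sum_{\sigma \in \Ee_{\rm int}} F_{i,K\sigma}^n (\varphi_K^n - \varphi_{K\sigma}^n).
\]
For $i = 0$, the same identity holds after summing the conservation laws over $i=1,\dots,I$ and using $u_{0,K}^n = 1 - \sum_{i=1}^I u_{i,K}^n$, producing in the right-hand side $\sum_i F_{i,K\sigma}^n (\varphi_K^n-\varphi_{K\sigma}^n)$, to which the same bound will apply up to a factor $I$.

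Next, since $a_\sigma d_\sigma = m_\sigma$, I factor each edge contribution as $\bigl(\sqrt{d_\sigma/m_\sigma}\, F_{i,K\sigma}^n\bigr)\cdot\bigl(\sqrt{a_\sigma}\,(\varphi_K^n-\varphi_{K\sigma}^n)\bigr)$ and apply the Cauchy--Schwarz inequality, first in space over $\sigma \in \Ee_{\rm int}$ and then in time (weighted by $\tau^n$), which yields
\[
\Bigl|\sum_{n=1}^N \tau^n \sum_{\sigma\in\Ee_{\rm int}} F_{i,K\sigma}^n(\varphi_K^n-\varphi_{K\sigma}^n)\Bigr| \leq \Bigl(\sum_{n=1}^N \tau^n \sum_{\sigma} \tfrac{d_\sigma}{m_\sigma}|F_{i,K\sigma}^n|^2\Bigr)^{1/2}\Bigl(\sum_{n=1}^N \tau^n \sum_{\sigma} a_\sigma (\varphi_K^n-\varphi_{K\sigma}^n)^2\Bigr)^{1/2}.
\]
Finally, Lemma~\ref{lem:FiL2} bounds the first factor by $C(1+t^N)^{1/2}$, which establishes~\eqref{eq:L2H-1}.

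There is no substantial obstacle here: the estimate is essentially a duality rewriting of the $L^2_{\rm loc}(L^2)^d$ control on the fluxes provided by Lemma~\ref{lem:FiL2}. The only minor subtleties are making sure the boundary contributions vanish (guaranteed by the no-flux condition~\eqref{eq:BC.ui} at the discrete level) and handling the $i=0$ case via the constraint $\sum_{i=0}^I u_{i,K}^n = 1$, both of which are straightforward.
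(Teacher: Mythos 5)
Your proposal is correct and follows essentially the same route as the paper: multiply \eqref{eq:fvscheme_conserv} by $\tau^n\varphi_K^n$, use local conservativity and the no-flux boundary condition to integrate by parts onto interior edges, apply the weighted Cauchy--Schwarz inequality with $a_\sig d_\sig = m_\sig$, invoke Lemma~\ref{lem:FiL2}, and treat $i=0$ via the constraint \eqref{eq:u0Kn}. (The sign of your edge differences is flipped relative to the paper's display \eqref{eq:L2H-1.1}, but this is immaterial since the bound is invariant under $\varphi\mapsto-\varphi$.)
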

\begin{proof}
Let us first establish~\eqref{eq:L2H-1} for $i \geq 1$. Multiplying~\eqref{eq:fvscheme_conserv} by $\tau^n \varphi_K^n$, summing over $K\in\Tt$ and $n \in \{1,\dots, N\}$, and performing a discrete integration by parts on the contribution of the fluxes, namely $ \sum_{K\in\Tt} \sum_{\sigma\in\Ee_K} F_{i,K\sigma}^n \varphi_K^n = \sum_{\sig \in \Ee_{\rm int}}  F_{i,K\sig}^n (\varphi_{K\sig}^n - \varphi_K^n)$ for all $i = 1, \dots, I$, provides
\be\label{eq:L2H-1.1}
\sum_{n=1}^N \sum_{K\in\Tt} m_K (u_{i,K}^n - u_{i,K}^{n-1}) \varphi_K^n = \sum_{n=1}^N \tau^n \sum_{\sig \in \Ee_{\rm int}} F_{i,K\sig}^n (\varphi_{K\sig}^n - \varphi_K^n). 
\ee
Applying Cauchy--Schwarz inequality to the right-hand side then using Lemma~\ref{lem:FiL2} provides the desired result. 
The recovery of the estimate for $i=0$ then directly follows from the definition~\eqref{eq:u0Kn} of $u_{0,K}^n$ and from \eqref{eq:L2H-1} for $i \geq 1$.
\end{proof}

\section{Convergence of the schemes}\label{sec:conv}
This section is devoted to the proof of Theorem~\ref{theo:main.2}, which relies on compactness arguments. 
Given sequences $\left( \mathcal{T}_\ell, \mathcal{E}_\ell, \lbrace{x}_K\rbrace_{K \in \mathcal{T}_\ell}\right)_{\ell \geq 1}$ and $\left(\tau_\ell\right)_{\ell\geq 1}=\left(\left(\tau^n_\ell\right)_{n\geq 1}\right)_{\ell\geq 1}$ 
of admissible meshes and a sequence of time steps in the sense of Section~\ref{sec:space_time_discr}, with 
\be\label{eq:mesh.ell}
\lim_{\ell \to \infty} h_{\Tt_\ell} =\lim_{\ell \to \infty} \Delta t_{\ell}  = 0 \quad \text{and}\quad \zeta_{\Tt_\ell}\leq \zeta_{\star} < + \infty, 
\ee
we define the sequences $(u_{i,\Tt_\ell, \tau_\ell})_{\ell \geq 1}$ and $(\phi_{\Tt_\ell, \tau_\ell})_{\ell \geq 1}$ as in \eqref{eq:uTt-phiTt}\&\eqref{eq:u0Tt}. For readability, the $\ell$ index is removed when not essential for understanding. 

As usual in the analysis of finite volume schemes, we also need to handle quantities attached to the faces $\sig \in \Ee$. To this end, we introduce the so-called diamond mesh of $\O$ by associating a diamond cell $\omega_\sig$ to every $\sig \in \Ee$. More precisely, $\omega_\sig$ is the convex hull of $\{x_K, x_L, \sigma\}$ if $\sigma = K|L \in \Ee_{\rm int}$, and 
$\omega_\sig = \operatorname{conv}\{x_K, \sigma\}$ if $\sigma \in \Ee_\text{ext} \cap \Ee_K$, see Figure~\ref{fig:diamond_cell} for an illustration. 
The Lebesgue measure $m_{\omega_\sig}$ of $\omega_\sig$ is then given by 
\be\label{eq:m_omega_sig}
m_{\omega_\sig} = \frac{m_\sig d_\sig}d, \qquad \sig \in \Ee,
\ee
where $d$ is the dimension of the space domain $\O$.

Among other quantities attached to faces, one defines the inflated fluxes 
$\left(F_{i,\Ee,\tau}\right)_{1 \leq i \leq I}$ as the piecewise constant in space and time vector fields defined by 
\[
F_{i,\Ee,\tau}(t,x) = \frac{d}{m_\sig}\, F_{i,K\sig}^n n_{K\sig} \quad \text{if}\; (t,x) \in (t^{n-1}, t^n] \times \omega_\sig, 
\]
with $n_{K\sig}$ the normal to $\sigma$ outward with respect to $K$.
A major part of the analysis carried out in this section consists in showing that, up to the extraction of a subsequence, 
\[
F_{i,\Ee_\ell,\tau_\ell}(t,x) \underset{\ell \to +\infty}\longrightarrow F_i \quad \text{weakly in}\; L^2_{\rm loc}(\Rplus \times \overline \O)^d. 
\]
where we make use of the definition~\eqref{eq:Fi.3} of the continuous fluxes.

\begin{figure}[ht]
    \centering
    \begin{tikzpicture}[scale = .6]
\def\decalter{.5}
\def\decalbis{0.3}
\draw[line width = 1pt] (0,0)--(2,6)--(6,2)--(0,0);
\draw[line width = 1pt] (2,6)--(6,2)--(8,6)--(2,6);
\draw[color=white, fill = orange!20!white, line width = 0pt] (2.5,2.5)--(2,6)--(5,5)--(6,2)--cycle;
\draw[line width = 1.5pt, color = orange] (2,6)--(6,2);
\draw[color=white, fill = green!20!white, line width = 0pt] (2.5,2.5)--(6,2)--(0,0)--cycle;
\draw[line width = 1.5pt, color = green!50!black] (0,0)--(6,2);

\draw (5,5) node {$\color{red!80!black}\bullet$};
\draw (2.5,2.5) node {$\color{green!60!black}\bullet$};
\draw (2.5,2.4) node[left] {$x_K$};
\draw (5,5.1) node[right] {$x_L$};
\draw (3.15,5.15) node {\rotatebox{-45}{$\color{orange} \sig = K|L$}};
\draw (3.8+\decalbis,3.3 - \decalbis) node {$\color{orange} \omega_{\sig}$};
\draw (3.15,.8) node {\rotatebox{+16}{$\color{green!50!black} \sig' \subset \partial\O$}};
\draw (2.5,2) node[below] {$\color{green!50!black} \omega_{\sig'}$};
\end{tikzpicture}
  \caption{Examples of diamond cells $\omega_\sig, \omega_{\sigma'}$ for inner and external faces $\sigma$ and $\sigma'$.}
\label{fig:diamond_cell}
\end{figure}
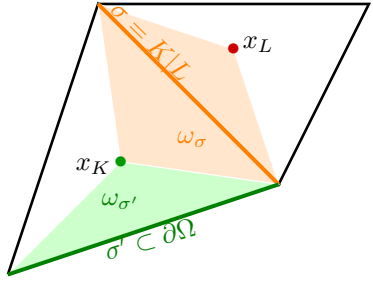

Following~\cite{CHLP03, EG03}, to a piecewise constant in space function $v_\Tt(x) = \sum_{K\in\Tt} v_K \mathbbm{1}_K(x)$, we associate the inflated gradient $\grad_\Tt v_\Tt: \O \to \R^d$ defined by 
\begin{subequations}\label{eq:grad_T}
\be
\grad_\Tt v_\Tt(x) = d\, \frac{v_{K\sig} - v_K}{d_\sig} n_{K\sig} \quad \text{if}\; x \in \omega_\sig.
\ee
This operator straightforwardly extends to piecewise constant functions in space and time:
\be
\grad_\Tt v_{\Tt,\tau}(t,x) = d\, \frac{v_{K\sig}^n - v_K^n}{d_\sig} n_{K\sig} \quad \text{if}\; (t,x) \in (t^{n-1},t^n]\times \omega_\sig.
\ee
\end{subequations}

This section is organized as follows.
The needed compactness properties are established in Section~\ref{ssec:compact}, allowing to claim for the existence of 
(at least) a limit point $(U, \phi)$ to the sequence ${(U_{\Tt_\ell, \tau_\ell}, \phi_{\Tt_m, \Delta t_\ell})}_{\ell \geq 1}$. 
Section~\ref{ssec:identify} concludes the proof of Theorem~\ref{theo:main.2} by showing that the limit value $(U, \phi)$ is a weak solution of the continuous problem \eqref{eq:contmodel_conserv}--\eqref{eq:init} in the sense of Definition~\ref{def:weak}.

\subsection{Compactness of the approximate solutions}\label{ssec:compact}

We establish here enough compactness properties to pass to the limit in the scheme. 

\begin{prop}[Compactness results]\label{prop:compact}
There exist functions $u_0 \in L^{\infty}(\Rplus\times \O)$ with $\sqrt{u_0} \in L^2_{\rm loc}(\Rplus; H^1(\O))$, $u_1, \dots, u_n  \in L^{\infty}(\Rplus\times\O)$ with $u_i \sqrt{u_0} \in L^2_{\rm loc}(\Rplus; H^1(\O))$, and $\phi \in L^{\infty}(\Rplus\times\O)$ with $\phi - \phi^D \in L^\infty(\Rplus; V)$ such that, up to a subsequence, as $\ell \to +\infty$, there holds
\begin{align}
\label{eq:conv.ui}
u_{i,\Tt,\tau} \rightarrow u_i &\quad\text{in the $L^\infty(\Rplus\times\O)$-weak-$\star$ sense}, \; 1 \leq i \leq I, 
\\
\label{eq:conv.u0}
u_{0,\Tt,\tau} \rightarrow u_0 &\quad\text{in the $L^\infty(\Rplus\times\O)$-weak-$\star$ 
sense and a.e. in $\Rplus\times\O$},
\\
\label{eq:conv.phi}
\phi_{\Tt,\tau}  \rightarrow \phi &\quad\text{in the $L^\infty(\Rplus\times\O)$-weak-$\star$ 
sense and a.e. in $\Rplus\times\O$}.
\end{align}
Moreover, for $1 \leq i \leq I$, one has 
\begin{align}
\label{eq:conv.sqrt_u0_ui}
u_{i,\Tt,\tau} \sqrt{u_{0,\Tt,\tau}}   \rightarrow u_i \sqrt{u_0} &\quad\text{a.e. in } \Rplus\times\O, 
\\
\label{eq:conv.u0_ui}
u_{0,\Tt,\tau} u_{i,\Tt,\tau}   \rightarrow u_0 u_i &\quad\text{a.e. in } \Rplus\times\O, 
\end{align}
and also
\begin{align}
\label{eq:conv.sqrt_grad_u0}
\nabla_{\Tt, \tau} \sqrt{u_{0,\Tt,\tau}} \rightharpoonup \nabla \sqrt{u_0} &\quad\text{weakly in } L^2_{{\rm loc}}(\Rplus; L^2(\O)^d), \\
\label{eq:conv.grad_u0_ui}
\nabla_{\Tt, \tau} [u_{0,\Tt,\tau} u_{i,\Tt,\tau}]  \rightharpoonup \nabla [u_0 u_i] &\quad\text{weakly in } L^2_{{\rm loc}}(\Rplus; L^2(\O)),
\\
\label{eq:conv.grad_phi}
\nabla_{\Tt, \tau} \phi_{\Tt,\tau} \rightharpoonup \nabla \phi &\quad\text{weakly in } L^2_{{\rm loc}}(\Rplus; L^2(\O)),
\end{align}
\end{prop}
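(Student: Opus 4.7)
The plan is to combine the uniform a priori bounds obtained in Section~\ref{sec:num_an_fixed_grid} with a discrete Aubin--Simon compactness argument (in the spirit of \cite{GL12}, or equivalently of the translation estimates of \cite{Eymard2000}), and then to identify weak limits of discrete gradients using the standard consistency argument of \cite{CHLP03, EG03}. The $L^\infty$ weak-$\star$ convergences~\eqref{eq:conv.ui}, the weak-$\star$ parts of~\eqref{eq:conv.u0} and~\eqref{eq:conv.phi} are immediate from $0\le u_{i,K}^n\le 1$ and from the $L^\infty$ bound of Proposition~\ref{prop:bound.phi}, and yield, up to extraction, limits $u_i$, $u_0$ and $\phi$ with the announced bounds.

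To upgrade~\eqref{eq:conv.u0} to a.e. convergence, I would apply a discrete Aubin--Simon lemma to $u_{0,\Tt,\tau}$. The needed space compactness follows from the uniform control of $\sum_{n}\tau^n\sum_{\sigma\in\Ee_{\rm int}}a_\sigma(u_{0,K}^n-u_{0,L}^n)^2$ provided by Lemma~\ref{lem:L2locH1}, while time compactness follows from the $L^2_{\rm loc}((H^1)')$-type estimate on time increments of Corollary~\ref{cor:L2H-1}. This gives a subsequence converging strongly in $L^2_{\rm loc}(\Rplus;L^2(\O))$ hence, up to further extraction, a.e. The a.e. convergence of $\phi_{\Tt,\tau}$ is obtained similarly: space compactness stems from Proposition~\ref{prop:bound.phi}, and time compactness can be derived by subtracting the discrete Poisson equation~\eqref{eq:scheme.Poisson} between two successive time steps and testing against $\phi^n-\phi^{n-1}$, the right-hand side being controlled via Corollary~\ref{cor:L2H-1} applied to the $u_i$.

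The a.e. convergence of the products is the delicate point. I would first apply the Aubin--Simon argument to $u_{0,\Tt,\tau}u_{i,\Tt,\tau}$: space compactness is exactly~\eqref{eq:L2locH1.4}, and time compactness is derived from the discrete product rule
\[
(u_{i,K}^n u_{0,K}^n)-(u_{i,K}^{n-1}u_{0,K}^{n-1}) = u_{0,K}^n(u_{i,K}^n-u_{i,K}^{n-1}) + u_{i,K}^{n-1}(u_{0,K}^n-u_{0,K}^{n-1}),
\]
combined with the $L^\infty$ bounds $0\le u_{i,K}^n,u_{0,K}^n\le 1$ and Corollary~\ref{cor:L2H-1}. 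The a.e. limit is then identified as $u_i u_0$ by comparison with the weak-$\star$ limit, using the a.e. convergence of $u_{0,\Tt,\tau}$. To get~\eqref{eq:conv.sqrt_u0_ui}, I would split the domain into $\{u_0>0\}$ and $\{u_0=0\}$: on the first set, $u_{i,\Tt,\tau}=(u_{0,\Tt,\tau}u_{i,\Tt,\tau})/u_{0,\Tt,\tau}$ converges a.e., so $u_{i,\Tt,\tau}\sqrt{u_{0,\Tt,\tau}}$ does too; on the second, the trivial bound $u_{i,\Tt,\tau}\sqrt{u_{0,\Tt,\tau}}\le \sqrt{u_{0,\Tt,\tau}}\to 0=u_i\sqrt{u_0}$ suffices.

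The weak convergences of the discrete gradients~\eqref{eq:conv.sqrt_grad_u0}--\eqref{eq:conv.grad_phi} are then obtained from the $L^2_{\rm loc}(\Rplus;L^2(\O)^d)$ bounds of Lemma~\ref{lem:L2locH1} and Proposition~\ref{prop:bound.phi} (which yield weak limits up to a further extraction), combined with the a.e./strong $L^2_{\rm loc}$ convergence of $\sqrt{u_{0,\Tt,\tau}}$, $u_{0,\Tt,\tau}u_{i,\Tt,\tau}$ and $\phi_{\Tt,\tau}$ and the standard identification of the weak limit of an inflated discrete gradient as the distributional gradient of the limit, which relies on the orthogonality condition and on the mesh regularity bound $\zeta_{\Tt_\ell}\le\zeta_\star$. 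The main obstacle I expect is precisely the handling of the possibly degenerate set $\{u_0=0\}$ in the identification of the a.e. limit of $u_i\sqrt{u_0}$ (and of products entering~\eqref{eq:Fi.3}), since $u_{i,\Tt,\tau}$ itself is not known to converge a.e. there; a second technical point is to verify the $(H^1)'$ time compactness for the product $u_0 u_i$ from the splitting above in a form compatible with the discrete Aubin--Simon framework.
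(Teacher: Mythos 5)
Most of your architecture matches the paper's proof: the weak-$\star$ convergences via Banach--Alaoglu, the a.e. convergence of $u_{0,\Tt,\tau}$ and $\phi_{\Tt,\tau}$ via a discrete Aubin--Lions argument based on Lemma~\ref{lem:L2locH1} and Corollary~\ref{cor:L2H-1} (resp. on time increments of the discrete Poisson equation, as in \cite{CCFG21}), and the identification of the weak limits of the inflated gradients through their uniform $L^2_{\rm loc}$ bounds combined with the weak consistency of $\grad_\Tt$. Your derivation of \eqref{eq:conv.sqrt_u0_ui} from \eqref{eq:conv.u0_ui} by splitting $\{u_0>0\}\cup\{u_0=0\}$ is also a legitimate alternative to the paper's direct argument.

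The genuine gap is in your time-compactness argument for the product $u_{0,\Tt,\tau}u_{i,\Tt,\tau}$, which you flag as a ``technical point'' but which is in fact the crux. In your discrete product rule, the term $\sum_K m_K\, u_{i,K}^{n-1}\,(u_{0,K}^n-u_{0,K}^{n-1})\,\varphi_K^n$ cannot be handled by Corollary~\ref{cor:L2H-1}: dualizing the time increment of $u_0$ against the test function $u_{i}^{n-1}\varphi$ requires a bound on $\sum_\sig a_\sig \bigl(u_{i,K}^{n-1}\varphi_K - u_{i,L}^{n-1}\varphi_L\bigr)^2$, hence on $\sum_\sig a_\sig \bigl(u_{i,K}^{n-1}-u_{i,L}^{n-1}\bigr)^2$, and no discrete gradient estimate on $u_i$ alone is available --- this is precisely the degeneracy of the system on $\{u_0=0\}$; Lemma~\ref{lem:L2locH1} only controls $u_0$, $\sqrt{u_0}$, $\sqrt{u_iu_0}$, $u_i\sqrt{u_0}$ and $u_iu_0$. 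The paper sidesteps this by invoking the ``degenerate type'' discrete Aubin--Lions lemma of \cite[Lemma 10]{CCGJ19} (see also \cite{ACM17}) with $y_\ell=\sqrt{u_{0,\Tt_\ell,\tau_\ell}}$ (resp. $u_{0,\Tt_\ell,\tau_\ell}$) and $z_\ell=u_{i,\Tt_\ell,\tau_\ell}$: that lemma only requires the strong space-time compactness of $y_\ell$, the $L^\infty$ bound on $z_\ell$, the space translation estimate on the product, i.e. \eqref{eq:L2locH1.2} (resp. \eqref{eq:L2locH1.4}), and the weak time estimate \eqref{eq:L2H-1} on $z_\ell$ alone --- no time estimate on the product is needed. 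If you insist on your product-rule route, the offending term must instead be estimated in $L^1$ using only $|u_i|\le 1$ together with the uniform-in-$\ell$ equicontinuity of the time translates of $u_{0,\Tt,\tau}$ (already available from its strong compactness), rather than through Corollary~\ref{cor:L2H-1}; carrying this out essentially reproves the cited lemma.
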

\begin{proof}
    The $L^\infty(\Rplus\times\O)$-weak-$\star$ convergences \eqref{eq:conv.ui}--\eqref{eq:conv.phi} are the consequences of the uniform bounds on $u_{i,\Tt,\tau}$ and $\phi_{\Tt,\tau}$ in $L^\infty(\Rplus\times\O)$, thanks to the Banach--Alaoglu Theorem.
{The almost everywhere convergence \eqref{eq:conv.u0} follows from strong convergence given by the} Aubin--Lions lemma, see \cite{GL12} for a general presentation of the lemma, and \cite[Lemma 9]{CCGJ19} for its application in our context{. It} can be applied thanks to the uniform estimates of Lemma~\ref{lem:L2locH1} and Corollary \ref{cor:L2H-1}. {These estimates also yield} the weak convergence in $L^2_{{\rm loc}}(\Rplus; L^2(\O))$ of $\nabla_{\Tt, \tau} u_{0,\Tt,\tau}$ towards $\nabla u_0$. Concerning the 
 point-wise convergence of $\phi_{\Tt,\tau}$ towards $\phi$, it can be proven by using the discrete $L^2_{\rm loc}((H^1)')$ estimate on time increments of the right-hand side of the discrete Poisson equation~\eqref{eq:scheme.Poisson} that follows from Corollary~\ref{cor:L2H-1}. As the proof is fully similar to the one of~\cite[Proposition 4.2]{CCFG21}, we do not provide details here. 
The weak compactness property  \eqref{eq:conv.grad_phi} is also established in \cite[Proposition 4.2]{CCFG21}.

In order to prove~\eqref{eq:conv.sqrt_grad_u0}, one can either make use of the non-linear Aubin--Simon theorem of~\cite{ACM17}, or directly remark that, thanks to Lemma~\ref{lem:L2locH1}, 
the vector field $\grad_\Tt \sqrt{u_{0,\Tt,\tau}}$ is uniformly bounded in $L^2_{\rm loc}(\Rplus; L^2(\O)^d)$. We deduce from this boundedness that there exists $G_0 \in L^2_{\rm loc}(\Rplus; L^2(\O)^d)$ such that, up to a subsequence, $\grad_\Tt \sqrt{u_{0,\Tt,\tau}}$ tends to $G_0$ weakly. As $\sqrt{u_{0,\Tt,\tau}}$ converges point-wise (and thus in $L^1_{\rm loc}(\Rplus\times\O)$) towards $\sqrt{u_0}$ owing to~\eqref{eq:conv.u0}, the weak consistency of the inflated gradient $\grad_{\Tt}$, see for instance \cite{CHLP03, DE09}, allows to show that $G_0 = \grad \sqrt{u_0}$.

    Regarding the point-wise convergences \eqref{eq:conv.sqrt_u0_ui} and \eqref{eq:conv.u0_ui}, of $u_{i,\Tt,\tau} \sqrt{u_{0,\Tt,\tau}}$, respectively, we can conclude by applying \cite[Lemma 10.~(Discrete Aubin--Lions of “degenerate” type)]{CCGJ19}. To prove \eqref{eq:conv.sqrt_u0_ui}, we use it with $y_\ell=\sqrt{u_{0,\Tt_\ell,\tau_\ell}}$ and $z_\ell=u_{i,\Tt_\ell,\tau_\ell}$, taking advantage of the uniform estimate \eqref{eq:L2H-1}, \eqref{eq:L2locH1.1}, and \eqref{eq:L2locH1.2}. Whereas, with $y_\ell=u_{0,\Tt_\ell,\tau_\ell}$ and $z_\ell=u_{i,\Tt_\ell,\tau_\ell}$, we conclude \eqref{eq:conv.u0_ui} similarly, replacing \eqref{eq:L2locH1.2} by \eqref{eq:L2locH1.4}.

It finally remains to prove \eqref{eq:conv.grad_u0_ui} for $i=1,\dots, I$. As a consequence of~\eqref{eq:L2locH1.4}, $\grad_\Tt [u_{i,\Tt,\tau} u_{0,\Tt,\tau}]$ is uniformly bounded in $L^2_{\rm loc}(\Rplus\times \overline \O)^d$. Hence there exists $G_i$ such that, up to a subsequence, 
\[
\grad_\Tt [u_{i,\Tt,\tau} u_{0,\Tt,\tau}] \rightharpoonup G_i \quad \text{weakly in }L^2_{\rm loc}(\Rplus\times \overline \O)^d.
\]
As $u_{i,\Tt,\tau} u_{0,\Tt,\tau}$ converges towards $u_i u_0$ thanks to~\eqref{eq:conv.u0_ui}, we can invoke again the weak consistency of the inflated gradient to conclude that $G_i = \grad [u_i u_0]$.
\end{proof}

\subsection{Identification of the limit}\label{ssec:identify}
In this section, we conclude the proof of Theorem~\ref{theo:main.2} by showing the following proposition. 
\begin{prop}\label{prop:identify}
Let $(U,\phi)$ be as in Proposition~\ref{prop:compact}, then it is a weak solution of the problem~\eqref{eq:contmodel_conserv}--\eqref{eq:init} in the sense of Definition~\ref{def:weak}.
\end{prop}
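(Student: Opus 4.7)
The plan is to verify the three conditions of Definition~\ref{def:weak} for the limit $(U,\phi)$ produced by Proposition~\ref{prop:compact}. The $L^\infty$ bounds and $L^2_{\rm loc}(H^1)$ regularity of the first bullet are already in hand from Proposition~\ref{prop:compact}; the pointwise constraint $U\in\Aa$ a.e.~follows by passing to the weak-$\star$ limit in the discrete bounds $u_{i,K}^n\ge 0$ and $\sum_{i=1}^I u_{i,K}^n\le 1$ supplied by Proposition~\ref{prop:positivity}. The weak Poisson equation~\eqref{eq:weak.phi} is then standard: test~\eqref{eq:scheme.Poisson} against $m_K\chi(t)\psi(x_K)$ for $\psi\in C_c^1(\O\cup\Gamma^N)$ and $\chi\in C_c^1(\Rplus)$, perform a discrete integration by parts, and pass to the limit using~\eqref{eq:conv.grad_phi}, the weak-$\star$ convergence of $u_{i,\Tt,\tau}$, and the strong consistency $\grad_\Tt\psi_\Tt\to\grad\psi$ for smooth $\psi$. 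The resulting identity, valid for all $\chi$, yields~\eqref{eq:weak.phi} for a.e.~$t$.

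For~\eqref{eq:weak.ui}, fix $\varphi\in C_c^1(\Rplus\times\overline\O)$ and set $\varphi_K^n=\varphi(t^{n-1},x_K)$. Multiplying~\eqref{eq:fvscheme_conserv} by $\tau^n\varphi_K^{n-1}$, summing over $K\in\Tt$ and $n\ge 1$, and performing an Abel summation in time (using the compact time-support of $\varphi$) converts the accumulation term into $-\sum_K m_K u_{i,K}^0\varphi_K^0 - \sum_{n,K} m_K u_{i,K}^n(\varphi_K^n-\varphi_K^{n-1})$; the weak-$\star$ convergence of $u_{i,\Tt,\tau}$ together with the $L^1$ strong consistency $(\varphi_K^n-\varphi_K^{n-1})/\tau^n\to\partial_t\varphi$ makes this pass to the limit as $-\int u_i^0\varphi(0,\cdot)-\iint u_i\partial_t\varphi$. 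Using local conservativity and the no-flux condition~\eqref{eq:BC.ui}, the flux term reduces to an interior-edge sum which, via the diamond-cell bookkeeping~\eqref{eq:m_omega_sig}, expresses as the dual pairing (up to a fixed factor $1/d$) of the inflated flux $F_{i,\Ee,\tau}$ against the inflated gradient $\grad_\Tt\varphi_{\Tt,\tau}$.

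The heart of the proof is therefore the weak convergence $F_{i,\Ee,\tau}\rightharpoonup F_i$ in $L^2_{\rm loc}(\Rplus\times\overline\O)^d$ with $F_i$ as in~\eqref{eq:Fi.3}: Lemma~\ref{lem:FiL2} supplies weak precompactness, so only the identification of the limit requires work. Splitting $F^n_{i,K\sig}=F^{{\rm conv},n}_{i,K\sig}+F^{{\rm diff},n}_{i,K\sig}$ as in~\eqref{eq:Fi.conv}--\eqref{eq:Fi.diff}, the convective part inflates to $-D_iz_i\,\tfrac{u_{i,K}^n u_{0,L}^n+u_{i,L}^n u_{0,K}^n}{2}\,\grad_\Tt\phi_{\Tt,\tau}$; its scalar prefactor is uniformly bounded and converges strongly in $L^2_{\rm loc}$ to $u_iu_0$ (using~\eqref{eq:conv.u0_ui} together with control of face-vs-cell oscillations from~\eqref{eq:L2locH1.1}), while $\grad_\Tt\phi_{\Tt,\tau}\rightharpoonup\grad\phi$ by~\eqref{eq:conv.grad_phi}, so a weak-strong pairing gives the limit $-D_iz_iu_iu_0\grad\phi$. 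For the diffusive part, the central discrete identity
\begin{multline*}
u_{i,K}^n u_{0,L}^n-u_{i,L}^n u_{0,K}^n = \bigl(u_{i,K}^n u_{0,K}^n-u_{i,L}^n u_{0,L}^n\bigr) \\
-\bigl(u_{i,K}^n+u_{i,L}^n\bigr)\bigl(\sqrt{u_{0,K}^n}+\sqrt{u_{0,L}^n}\bigr)\bigl(\sqrt{u_{0,K}^n}-\sqrt{u_{0,L}^n}\bigr)
\end{multline*}
mirrors at the discrete level the continuous reformulation $u_0\grad u_i-u_i\grad u_0=\grad(u_0u_i)-4u_i\sqrt{u_0}\grad\sqrt{u_0}$. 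Inflated, the first piece becomes $-D_i\grad_\Tt(u_iu_0)\rightharpoonup -D_i\grad(u_iu_0)$ by~\eqref{eq:conv.grad_u0_ui}; the second couples $\grad_\Tt\sqrt{u_0}\rightharpoonup\grad\sqrt{u_0}$ (from~\eqref{eq:conv.sqrt_grad_u0}) with the bounded, a.e.-convergent factor $(u_{i,K}^n+u_{i,L}^n)(\sqrt{u_{0,K}^n}+\sqrt{u_{0,L}^n})/4\to u_i\sqrt{u_0}$ (from~\eqref{eq:conv.sqrt_u0_ui} and~\eqref{eq:conv.u0}), yielding the weak limit $4D_iu_i\sqrt{u_0}\grad\sqrt{u_0}$; the $O((\phi_K^n-\phi_L^n)^2)$ remainder in~\eqref{eq:Fi.diff} is pointwise dominated by $Ch_\Tt|\grad_\Tt\phi_{\Tt,\tau}|^2$ and therefore vanishes in $L^1_{\rm loc}$ as the mesh is refined. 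Pairing the resulting weak limit with the strong $L^2$ consistency of $\grad_\Tt\varphi_{\Tt,\tau}$ closes the argument. The main obstacle is exactly this identification of the diffusive flux: the nonlinear cross-diffusion structure forbids a naive weak-weak product, and it is precisely the algebraic decomposition displayed above --- tuned to the combinations $u_iu_0$ and $\sqrt{u_0}$ on which Lemma~\ref{lem:L2locH1} provides discrete $L^2_{\rm loc}(H^1)$ control, and for which Proposition~\ref{prop:compact} delivers the strong a.e.~compactness through discrete Aubin--Lions --- that turns each term into a weak-strong product with an identifiable limit.
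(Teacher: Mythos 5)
Your argument reproduces the paper's strategy almost step for step: same treatment of the accumulation and initial terms, same convective/diffusive splitting of the flux, and the same weak--strong pairings built on the compactness of Proposition~\ref{prop:compact} and the estimates of Lemma~\ref{lem:L2locH1}. The only real variation is your two-term identity for the diffusive numerator, $u_{i,K}^n u_{0,L}^n-u_{i,L}^n u_{0,K}^n=(u_{i,K}^n u_{0,K}^n-u_{i,L}^n u_{0,L}^n)-(u_{i,K}^n+u_{i,L}^n)(\sqrt{u_{0,K}^n}+\sqrt{u_{0,L}^n})(\sqrt{u_{0,K}^n}-\sqrt{u_{0,L}^n})$, which is algebraically correct and absorbs into the symmetrized prefactor the remainder $T_{3,\sig}^n=(u_{i,K}^n-u_{i,L}^n)\bigl(\sqrt{u_{0,K}^n}-\sqrt{u_{0,L}^n}\bigr)^2$ that the paper estimates separately in $L^1$; either bookkeeping works, provided you also control the face-versus-cell oscillation of your prefactor through \eqref{eq:L2locH1.1}--\eqref{eq:L2locH1.2} (citing only the a.e.\ convergences \eqref{eq:conv.sqrt_u0_ui} and \eqref{eq:conv.u0} is not quite enough, since the prefactor lives on diamond cells and mixes values from two adjacent cells; the extra step needed is exactly the one you did spell out for the convective prefactor).

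The genuine gap is in the final pairing with the test function. You claim that the flux term equals, up to a factor $1/d$, the pairing of $F_{i,\Ee,\tau}$ with $\grad_\Tt\varphi_{\Tt,\tau}$ and that the latter is strongly consistent. The algebraic identity is true, but the inflated gradient $\grad_\Tt$ of (the interpolate of) a smooth function is only \emph{weakly} consistent: on $\omega_\sig$ it equals $d\,(\grad\varphi\cdot n_{K\sig})\,n_{K\sig}+o(1)$, a field whose direction oscillates with $\sig$ and which converges to $\grad\varphi$ only in the weak $L^2$ sense (this is the standard fact behind the factor $d$ in \eqref{eq:grad_T}; already for $\varphi(x)=x_1$ on a uniform Cartesian grid the reconstruction oscillates between $d\,e_1$ and $0$). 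Pairing a weakly convergent flux with a merely weakly convergent gradient does not pass to the limit, so your concluding sentence does not close the argument as written. The fix is the one the paper uses: since $F_{i,\Ee,\tau}$ is parallel to $n_{K\sig}$ on $\omega_\sig$, only the normal component of the test gradient enters the pairing, and one may replace $\tfrac1d\grad_\Tt\varphi_{\Tt,\tau}$ by the reconstruction $\widehat\grad_\Tt\tilde\varphi_{\Tt,\tau}$ of~\cite{DE_FVCA8}, which has the same normal component $(\varphi_L^n-\varphi_K^n)/d_\sig$ but is built so as to converge \emph{strongly} to $\grad\varphi$; the weak--strong pairing then yields \eqref{eq:identify.4}. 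Everything else in your proposal is sound.
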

\begin{proof}
The regularity requirements on $U$ and $\phi$ have already been checked in Proposition~\ref{prop:compact}. Therefore it only remains to verify that the weak formulations~\eqref{eq:weak.phi} and \eqref{eq:weak.ui} hold true. The case of the Poisson equation is classical. It will not be detailed here, and we refer to~\cite[Proposition 4.2]{CCFG21} for a synthetic proof. We rather focus our attention on the derivation of~\eqref{eq:weak.ui}.

Let $\varphi \in C^\infty_c(\Rplus\times\overline\Omega)$, then, for some admissible mesh $(\Tt_\ell, \Ee_\ell, (x_K)_{K\in\Tt_\ell})$ (we remove the subscript $\ell$ when possible for legibility), denote by $\varphi_K^n = \varphi(t^n, x_K)$. 
Multiplying~\eqref{eq:fvscheme_conserv} by $\tau^n \varphi_K^{n-1}$ and summing over $n \geq 1$ and $K \in \Tt$ yields, as for~\eqref{eq:L2H-1.1}:
\be\label{eq:identify.1}
\sum_{n=1}^N \sum_{K\in\Tt}m_K  (u_{i,K}^n - u_{i,K}^{n-1}) \varphi_K^{n-1} = \sum_{n=1}^N \tau^n \sum_{\sig \in \Ee_{\rm int}} F_{i,K\sig}^n (\varphi_{K\sig}^{n-1} - \varphi_K^{n-1}). 
\ee
Since $\varphi$ is compactly supported in time, $ \varphi_K^{n} = 0$ for $n$ large enough. Therefore, the left-hand side of~\eqref{eq:identify.1} is rewritten as
\[
\sum_{n=1}^N \sum_{K\in\Tt}m_K  (u_{i,K}^n - u_{i,K}^{n-1}) \varphi_K^{n-1}= \sum_{n=1}^N \tau^n \sum_{K\in\Tt} m_K u_{i,K}^n \frac{\varphi_K^{n-1} - \varphi_K^n}{\tau^n} - 
\sum_{K\in\Tt} m_K u_{i,K}^0 \varphi_K^0.
\]
Since $\varphi$ is smooth, the approximate time derivative $\delta_\tau \varphi_{\Tt,\tau}$ of $\varphi$ defined by 
\[
\delta_\tau \varphi_{\Tt,\tau}(t,x) =  \frac{\varphi_K^{n} - \varphi_K^{n-1}}{\tau^n} \quad \text{if}\; (t,x) \in (t^{n-1},t^n] \times K
\]
converges in $L^1(\Rplus\times\O)$ towards $\partial_t \varphi$. Therefore, using~\eqref{eq:conv.ui}, we get that 
\be\label{eq:identify.2}
\sum_{n=1}^N \tau^n \sum_{K\in\Tt} m_K u_{i,K}^n \frac{\varphi_K^{n-1} - \varphi_K^n}{\tau^n} \underset{\ell\to+\infty} \longrightarrow - \iint_{\Rplus\times\O} u_i \partial_t \varphi. 
\ee
The function $u_{i,\Tt}^0 = \sum_{K\in\Tt} u_{i,K}^0 \mathbbm1_K$ converges strongly in $L^1(\O)$ towards $u_i^0$, while $\varphi_\Tt^0= \sum_{K\in\Tt} \varphi_K^0 \mathbbm1_K$ converges uniformly towards $\varphi(0,\cdot)$ thanks to the regularity of $\varphi$. Therefore, 
\be\label{eq:identify.3}
-\sum_{K\in\Tt} m_K u_{i,K}^0 \varphi_K^0 \underset{\ell\to+\infty} \longrightarrow - \int_\O u_i^0 \varphi(0,\cdot), 
\ee
so we can pass to the limit in the left-hand side of~\eqref{eq:identify.1}.

Let us now focus on its right-hand side. 
Define $\tilde \varphi_{\Tt,\tau}$ by 
\[
\tilde \varphi_{\Tt,\tau}(t,x) = \varphi(t^{n-1},x_K) \quad \text{if\;} (t,x) \in [t^{n-1},t^n) \times K. 
\]
 Following~\cite{DE_FVCA8} (see~\cite{CVV99} for a practical example), 
one can reconstruct a second approximate gradient operator $\widehat \grad_{\Tt}$ mapping the set of piecewise constant functions in space and time to $\R^d$ such that 
\[
\widehat \grad_\Tt \tilde\varphi_{\Tt,\tau}(t,x) \cdot n_{K\sig} =
\frac{\varphi_{L}^n - \varphi_K^n}{d_\sig} \quad \text{if}\; (t,x) \in [t^{n-1}, t^n) \times \omega_\sig, \quad \sig = K|L \in \Ee_{\rm int}, 
\]
and which is strongly consistent, i.e., 
\[
\widehat \grad_\Tt \tilde\varphi_{\Tt,\tau} \underset{\ell \to +\infty} \longrightarrow \grad \varphi \quad \text{strongly in }L^2(\Rplus\times \O)^d.
\]
The right-hand side of~\eqref{eq:identify.1} then simply boils down to 
\[
 \sum_{n=1}^N \tau^n \sum_{\sig \in \Ee_{\rm int}} F_{i,K\sig}^n (\varphi_{K\sig}^{n-1} - \varphi_K^{n-1}) 
 = \iint_{\Rplus\times\O} F_{i,\Ee,\tau}\cdot \widehat \grad_\Tt \tilde\varphi_{\Tt,\tau}.
\]
We have shown in Lemma~\ref{lem:FiL2} that $\left(F_{i,\Ee_\ell,\tau_\ell}\right)_{\ell\geq 1}$ is uniformly bounded in $L^2_{\rm loc}(\Rplus\times \overline \O)^d$. Therefore, it converges (up to a subsequence) towards some vector field 
$F_i$ weakly in $L^2_{\rm loc}(\Rplus\times \overline \O)$. We can then pass to the limit in the right-hand side of~\eqref{eq:identify.1} to obtain 
\be\label{eq:identify.4}
 \sum_{n=1}^N \tau^n \sum_{\sig \in \Ee_{\rm int}} F_{i,K\sig}^n (\varphi_{K\sig}^{n-1} - \varphi_K^{n-1}) 
  \underset{\ell \to +\infty} \longrightarrow \iint_{\Rplus\times\O} F_i \cdot \grad \varphi.
\ee

To conclude the proof of Proposition~\ref{prop:identify}, we still have to identify the limiting flux $F_i$ under the 
form~\eqref{eq:Fi.3}. To this end, we split the inflated fluxes into a convective and diffusive part, 
\begin{align*}
F_{i,\Ee,\tau}^\text{conv}(t,x) =& \frac{d}{m_\sig}\, F_{i,K\sig}^{\text{conv}, n} n_{K\sig} \quad \text{if}\; (t,x) \in (t^{n-1}, t^n] \times \omega_\sig, \\
F_{i,\Ee,\tau}^\text{diff}(t,x) =& \frac{d}{m_\sig}\, F_{i,K\sig}^{\text{diff}, n} n_{K\sig} \quad \text{if}\; (t,x) \in (t^{n-1}, t^n] \times \omega_\sig, 
\end{align*}
where $F_{i,K\sig}^{\text{conv}, n}$ and $F_{i,K\sig}^{\text{diff}, n}$ are as in the proof of Lemma~\ref{lem:FiL2}. 
As both $F_{i,\Ee,\tau}^\text{conv}$ and {$F_{i,\Ee,\tau}^\text{diff}$} have been shown to be bounded in $L^2_{\rm loc}(\Rplus\times \overline \O)^d$, one has, up to a subsequence,
\be\label{eq:identify.5}
F_{i,\Ee,\tau}^\text{diff} \underset{\ell \to +\infty} \longrightarrow F_i^\text{diff} \quad\text{and}\quad F_{i,\Ee,\tau}^\text{conv}\underset{\ell \to +\infty} \longrightarrow F_i^\text{conv} \quad \text{weakly in }L^2_{\rm loc}(\Rplus\times \overline \O)^d.
\ee  

Let us first focus on the convective part $F_{i,\Ee,\tau}^\text{conv}$. For $i=1,\dots, I$, $\sig \in \Ee$ and $n \geq 1$, define 
\[
\eta_{i,\sig}^n = \begin{cases}
\frac{u_{i,K}^n u_{0,L}^n + u_{i,L}^n u_{0,K}^n}2 & \text{if}\; \sig = K|L \in \Ee_{\rm int}, \\
u_{i,K}^n u_{0,K}^n & \text{if}\; \sig \in \Ee_K \cap \Ee_\text{ext}, 
\end{cases}
\]
and
\[
\tilde \eta_{i,\sig}^n = \begin{cases}
\frac{u_{i,K}^n u_{0,K}^n + u_{i,L}^n u_{0,L}^n}2 & \text{if}\; \sig = K|L \in \Ee_{\rm int}, \\
u_{i,K}^n u_{0,K}^n & \text{if}\; \sig \in \Ee_K \cap \Ee_\text{ext}, 
\end{cases}
\]
and then
$\eta_{i,\Ee,\tau}(t,x) = \eta_{i,\sig}^n$ and $\tilde \eta_{i,\Ee,\tau}(t,x) = \eta_{i,\sig}^n$ if $(t,x) \in (t^{n-1},t^n] \times \omega_\sig$. In view of~\eqref{eq:Fi.conv} and of the definition~\eqref{eq:grad_T} of the inflated gradient, the convective part of the inflated approximate flux is rewritten as
\be\label{eq:Fi_Ee.conv}
F_{i,\Ee,\tau}^\text{conv} = - D_i\, \eta_{i,\Ee,\tau}\, z_i \, \grad_\Tt \phi_{\Tt,\tau}.
\ee
 Thanks to~\eqref{eq:conv.u0_ui}, proving that 
\be\label{eq:conv.etai.1}
\eta_{i,\Ee,\tau} - \tilde \eta_{i,\Ee,\tau} \underset{\ell \to +\infty} \longrightarrow 0 \quad \text{a.e. in } \Rplus \times \O
\ee
and
\be\label{eq:conv.etai.2}
\tilde \eta_{i,\Ee,\tau} - u_{0,\Tt,\tau} u_{i,\Tt,\tau} \underset{\ell \to +\infty} \longrightarrow 0 \quad \text{a.e. in } \Rplus \times \O
\ee
is equivalent to showing that 
\[
\eta_{i,\Ee,\tau}\underset{\ell \to +\infty} \longrightarrow u_0 u_i  \quad \text{a.e. in } \Rplus \times \O, 
\]
and thus strongly in $L^2_{\rm loc}(\Rplus \times \overline \O)$ thanks to the uniform bound $0 \leq u_{i,\Tt,\tau} \leq 1$. 
Bearing in mind~\eqref{eq:conv.grad_phi}, this allows to pass to the weak limit in \eqref{eq:Fi_Ee.conv} and to recover that 
\be\label{eq:identify.Fi.conv}
F_i^\text{conv} = - D_i\, u_0u_i\, z_i\, \grad \phi. 
\ee

To establish~\eqref{eq:conv.etai.1}, remark that 
\[
\left\|\eta_{i,\Ee,\tau} - \tilde \eta_{i,\Ee,\tau}\right\|_{L^2((0,t^N)\times\O)} ^2 
= \frac14\sum_{n=1}^N \tau^n \sum_{\sig \in \Ee_{\rm int}} m_{\omega_\sig} (u_{i,K}^n - u_{i,L}^n)^2  (u_{0,K}^n - u_{0,L}^n)^2. 
\]
Since $0 \leq u_{i,K}^n \leq 1$, owing to~\eqref{eq:m_omega_sig}, and since $d_\sig \leq 2 h_\Tt$ this gives 
\[
\left\|\eta_{i,\Ee,\tau} - \tilde \eta_{i,\Ee,\tau}\right\|_{L^2((0,t^N)\times\O)}^2 \leq \frac{h_\Tt^2}d 
\sum_{n=1}^N \tau^n \sum_{\sig \in \Ee_{\rm int}} a_\sig (u_{0,K}^n - u_{0,L}^n)^2. 
\]
We can make use of estimate~\eqref{eq:L2locH1.1} to get that $\eta_{i,\Ee,\tau} - \tilde \eta_{i,\Ee,\tau}$ tends to $0$ in $L^2_{\rm loc}(\Rplus\times\overline \O)$, hence almost everywhere up to the extraction of yet another subsequence, 
whence~\eqref{eq:conv.etai.1}. 

Concerning~\eqref{eq:conv.etai.2}, we can proceed similarly to obtain 
\[
\left\|\tilde \eta_{i,\Ee,\tau} - u_{0,\Tt,\tau} u_{i,\Tt,\tau} \right\|_{L^2((0,t^N)\times\O)} ^2 
{\leq}  \frac{h_\Tt^2}d \sum_{n=1}^N \tau^n \sum_{\sig \in \Ee_{\rm int}} a_\sig \left( u_{0,K}^n u_{i,K}^n -  u_{0,L}^n u_{i,L}^n \right)^2
\]
which tends to $0$ thanks to~\eqref{eq:L2locH1.4}. 

It finally remains to identify the limit of $F_{i,\Ee,\tau}^\text{diff}$ as $F_i^\text{diff}$. To this end, remark first that, for $\sig = K|L \in \Ee_{\rm int}$ and $n \geq 1$, one has 
\[
u_{i,K}^n u_{0,L}^n - u_{i,L}^n u_{0,K}^n = T_{1,\sig}^n + T_{2,\sig}^n + T_{3,\sig}^n
\]
with 
\begin{align*}
 T_{1,\sig}^n =\; & u_{i,K}^n u_{0,K}^n - u_{i,L}^n u_{0,L}^n,\\
 T_{2,\sig}^n =\; & -2 \left(u_{i,K}^n \sqrt{u_{0,K}^n} + u_{i,L}^n \sqrt{u_{0,L}^n}\right)\left( \sqrt{u_{0,K}^n} - \sqrt{u_{0,L}^n}\right) ,\\
 T_{3,\sig}^n =\; & \left(u_{i,K}^n - u_{i,L}^n\right) \left(\sqrt{u_{0,K}^n} - \sqrt{u_{0,L}^n}\right)^2.
\end{align*}
Therefore, in view of~\eqref{eq:Fi.diff}, we can split 
\be\label{eq:identify.Fi_diff}
F_{i,\Ee,\tau}^\text{diff} = - D_i \grad_\Tt [u_{i,\Tt,\tau} u_{0,\Tt,\tau}]  + 4 D_i \gamma_{i,\Ee,\tau} \grad_\Tt \sqrt{u_{0,\Tt,\tau}} + \Rr_{i,\Ee,\tau} + \Ss_{i,\Ee,\tau} ,
\ee
where we have set 
\[
\gamma_{i,\Ee,\tau}(t,x) = \begin{cases}
\frac12\left(u_{i,K}^n \sqrt{u_{0,K}^n} + u_{i,L}^n \sqrt{u_{0,L}^n}\right) & \text{if}\; (t,x) \in (t^{n-1},t^n] \times \omega_{\sig}, \; \sig = K|L \in \Ee_{\rm int}, \\
u_{i,K}^n \sqrt{u_{0,K}^n} &  \text{if}\; (t,x) \in (t^{n-1},t^n] \times \omega_{\sig}, \; \sig \in \Ee_K \cap \Ee_{\text{ext}}, 
\end{cases} 
\]
and where, for $(t,x) \in (t^{n-1},t^n] \times \omega_{\sig}$,  $\sig = K|L \in \Ee_{\rm int},$
\[
\Rr_{i,\Ee,\tau}(t,x) = d \, D_i \, (u_{i,K}^n - u_{i,L}^n)  \left(\sqrt{u_{0,K}^n} - \sqrt{u_{0,L}^n}\right)^2 n_{K\sig}, 
\]
and, in view of \eqref{eq:Fi.diff}, 
\[
\left| \Ss_{i,\Ee,\tau}(t,x) \right| \leq \frac{C}{d_\sig} \left(\phi_K^n - \phi_L^n\right)^2. 
\]
Both $\Rr_{i,\Ee,\tau}$ and $\Ss_{i,\Ee,\tau}$ are assumed to vanish on $\Rplus \times \omega_\sig$ for all $\sig \in \Ee_\text{ext}$.
Then thanks to Proposition~\ref{prop:bound.phi}, one has 
\[
\|\Ss_{i,\Ee,\tau}\|_{L^1((0,t^N)\times\O)^d} \leq C \sum_{n=1}^N \tau^n \sum_{\sig \in \Ee_{\rm int}} m_\sig \left(\phi_K^n - \phi_L^n\right)^2 \leq C h_\Tt t^N.
\]
On the other hand, since $0 \leq u_{i,K}^n \leq 1$, we deduce from~\eqref{eq:L2locH1.1} that 
\[
\|\Rr_{i,\Ee,\tau}\|_{L^1((0,t^N)\times\O)^d} \leq D_i \sum_{n=1}^N \tau^n \sum_{\sig \in \Ee_{\rm int}} m_\sig
\left(\sqrt{u_{0,K}^n} - \sqrt{u_{0,L}^n}\right)^2 \leq C h_\Tt (1+t^N). 
\]
The two last terms in~\eqref{eq:identify.Fi_diff} then tend to $0$, while the first term tends to $-D_i \grad [u_0u_i]$ thanks to 
\eqref{eq:conv.grad_u0_ui}. In view of the weak $L^2_{\rm loc}(L^2)$ convergence of $\grad_\Tt \sqrt{u_{0,\Tt,\tau}}$ towards $\grad \sqrt{u_0}$, cf. \eqref{eq:conv.sqrt_grad_u0}, then it suffices to show that 
\[
\gamma_{i,\Ee,\tau} \underset{\ell \to+\infty} \longrightarrow u_i \sqrt{u_0} \quad \text{strongly in\; } L^2_{\rm loc}(\Rplus \times \overline \O) 
\]
to pass to the limit in~\eqref{eq:identify.Fi_diff}. As $u_{i,\Tt,\tau} \sqrt{u_{0,\Tt,\tau}}$ converges strongly towards $u_i \sqrt{u_0}$, see~\eqref{eq:conv.sqrt_u0_ui}, then 
\begin{align*}
\|\gamma_{i,\Ee,\tau} - u_{i,\Tt,\tau} \sqrt{u_{0,\Tt,\tau}}\|_{L^2((0,t^N)\times \O)}^2 =\; & \frac12 \sum_{n=1}^N \tau^n \sum_{\sig \in \Ee_{\rm int}} m_{\omega_\sig} \left( u_{i,K}^n \sqrt{u_{0,K}^n} - u_{i,L}^n \sqrt{u_{0,L}^n} \right)^2 \\
\leq \; & \frac{h_\Tt^2}d \sum_{n=1}^N \tau^n \sum_{\sig \in \Ee_{\rm int}} a_\sig \left( u_{i,K}^n \sqrt{u_{0,K}^n} - u_{i,L}^n \sqrt{u_{0,L}^n} \right)^2 \\
\overset{\eqref{eq:L2locH1.2}}\leq \; & \frac{h_\Tt^2}d (1+t^N)  \underset{\ell \to+\infty} \longrightarrow 0.
\end{align*}
This completes the proof of~\eqref{eq:identify.5}, thus the ones of Proposition \ref{prop:identify} and Theorem~\ref{theo:main.2}.
\end{proof}

\section{Long-time asymptotic of the scheme}\label{sec:asymptotic}

Our aim here is to prove our last main theoretical contribution, namely Theorem~\ref{theo:main.3}. 
Throughout this section, we will work on a fixed grid $(\Ee,\Tt,(x_K)_{K\in\Tt})$ and for a fixed time discretization. 

We are interested in steady states $(U^\infty_\Tt, \phi_\Tt^\infty)$ of the scheme~\eqref{eq:scheme.Poisson}--\eqref{eq:u0Kn}, i.e. solutions to 
\be\label{eq:scheme.Poisson.inf}
\lambda^2 \sum_{\sig \in \Ee_K} a_\sig( \phi_K^\infty - \phi_{K\sig}^\infty) = m_K\left( f_K + \sum_{i=1}^I z_i u_{i,K}^\infty \right), \qquad K\in\Tt,
\ee
\be\label{eq:scheme.cons.inf}
\sum_{\sig \in \Ee_K} F_{i,K\sig}^\infty = 0, \qquad i = 1,\dots, I, \; K \in \Tt, 
\ee
with $F_{i,K\sigma}^\infty = 0$ if $\sig \in \Ee_\text{ext}$ and
    \begin{equation}\label{eq:scheme.Fi.inf}
    F_{i,K\sigma}^\infty = a_\sig D_i \left(u^\infty_{i,K}u^{\infty}_{0,L} \B\left(z_i(\phi_L^\infty - \phi_K^\infty)\right)
   - u^\infty_{i,L}u^{\infty}_{0,K}\B\left(z_i(\phi^\infty_{K}-\phi^\infty_{L})\right)\right)
    \end{equation}
for $\sig = K|L \in \Ee_{\rm int}$, where we have set
    \begin{equation}\label{eq:u0Kinf}
    u_{0,K}^\infty = 1 - \sum_{i = 1}^I u_{i,K}^\infty, \qquad K \in \Tt. 
    \end{equation}
    The above equations have to be complemented with some discrete counterpart of the constraint on the mass~\eqref{eq:cont.steadymass}, that is 
        \be\label{eq:constraint.mass}
    \sum_{K\in \Tt} m_K u_{i,K}^\infty = \int_\O u_i^0, \qquad i = 1,\dots, I. 
    \ee
We are moreover interested in the convergence of $(U^n_\Tt, \phi_\Tt^n)$ towards $(U^\infty_\Tt, \phi_\Tt^\infty)$ as $n$ goes to $+\infty$.

\begin{prop}\label{prop:steady.1}
There exists a solution to~\eqref{eq:scheme.Poisson.inf}--\eqref{eq:constraint.mass}, with constant in space potentials in  the sense that there exists $\boldsymbol{\mu}_\Tt^\infty = \left(\mu_{i,\Tt}^\infty\right)_{1\leq i \leq I} \in \R^I$ such that
\be\label{eq:steady.mu}
\log\frac{u_{i,K}^\infty}{u_{0,K}^\infty} + z_i \phi_K^\infty = \mu_{i,\Tt}^\infty, \qquad K \in \Tt, \; 1 \leq i \leq I, 
\ee
such that 
\be
\label{eq:steady.conv}
(U_\Tt^n, \phi_\Tt^n) \underset{n\to+\infty}\longrightarrow (U_\Tt^\infty, \phi_\Tt^\infty).
\ee
\end{prop}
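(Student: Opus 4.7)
The plan is to split the argument into a variational construction of a steady state and a LaSalle-type convergence argument along the trajectory. For the existence of $(U_\Tt^\infty, \phi_\Tt^\infty)$ together with $\boldsymbol\mu_\Tt^\infty$ satisfying~\eqref{eq:steady.mu}, I would parametrize the candidates through the functions $v_i$ of~\eqref{eq:vi} by setting $u_{i,K} = v_i(y_K, \boldsymbol\xi)$ for $i=0,\dots,I$ and $(y_\Tt,\boldsymbol\xi)\in\R^\Tt\times\R^I$. Such an ansatz automatically enforces $\sum_{i=0}^I u_{i,K}=1$ with $u_{0,K}>0$, and builds in $\log(u_{i,K}/u_{0,K}) + z_i y_K = \xi_i$. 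A short computation then shows that the discrete Poisson equation~\eqref{eq:scheme.Poisson.inf} and the mass constraints~\eqref{eq:constraint.mass} are exactly the Euler--Lagrange system of the functional $\Psi_\Tt$ from~\eqref{eq:PsiTt} evaluated at $(\phi_\Tt^\infty, \boldsymbol\mu_\Tt^\infty)$. Since $\Gamma^D$ is non-empty, the quadratic part of $\Psi_\Tt$ is coercive in $y_\Tt$ thanks to a discrete Poincar\'e inequality, and coercivity in $\boldsymbol\xi$ follows from comparing the log-sum-exp contribution with the linear one and using $0<\int_\O u_i^0<|\O|$, guaranteed by~\eqref{eq:init}. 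Hence $\Psi_\Tt$ attains its infimum at some $(\phi_\Tt^\infty, \boldsymbol\mu_\Tt^\infty)$, and setting $u_{i,K}^\infty := v_i(\phi_K^\infty, \boldsymbol\mu_\Tt^\infty)$ produces a steady state. The Slotboom reformulation~\eqref{eq:scheme.Fi.2} with $w_{i,K}^\infty = e^{\mu_{i,\Tt}^\infty}$ independent of $K$ makes every $F_{i,K\sig}^\infty$ vanish, so~\eqref{eq:scheme.cons.inf} holds trivially.

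For the convergence~\eqref{eq:steady.conv}, I would first telescope~\eqref{eq:NRG.1} and combine with the uniform bound on $\Hh_\Tt^n$ from Lemma~\ref{lem:boundentropy} to obtain $\sum_{n\geq 1}\tau^n \Dd_\Tt^n<\infty$. Together with the standing assumption $\inf_n \tau^n>0$, this yields $\Dd_\Tt^n\to 0$. Compactness of $\Aa^\Tt\times[-C,C]^\Tt$ (with $C$ from Proposition~\ref{prop:bound.phi}) then allows one to extract a convergent subsequence $(U_\Tt^{n_k},\phi_\Tt^{n_k})\to(\tilde U_\Tt,\tilde\phi_\Tt)$ and to pass to the limit in the discrete Poisson equation and in the mass identity~\eqref{eq:ui.mass}. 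To identify this limit as the stationary state built above, I would first show that $\tilde u_{0,K}>0$ for every $K\in\Tt$ by replaying at the limit the contradiction argument of Proposition~\ref{prop:positivity}: if $\tilde u_{0,K}=0$ at some cell, the structure of the limiting fluxes together with $\int_\O u_0^0>0$ and the connectedness of $\O$ lead to a contradiction. Once this strict positivity is available, the prefactor $\tilde u_{0,K}\tilde u_{0,L}\M(e^{-z_i\tilde\phi_K},e^{-z_i\tilde\phi_L})$ in~\eqref{eq:scheme.Fi.2} is uniformly positive, so the termwise nonnegativity of the dissipation combined with $\Dd_{i,\sig}^{n_k}\to 0$ forces $\tilde w_{i,K}=\tilde w_{i,L}$ across each interior edge. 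By connectivity of $\Tt$, the limiting electrochemical potentials are spatially constant, and~\eqref{eq:steady.mu} holds for the limit.

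To upgrade from subsequential to full convergence, I would invoke the uniqueness of the minimizer of $\Psi_\Tt$, which stems from its strict convexity: the Hessian of each log-sum-exp block coincides with the covariance matrix of the local discrete species distribution on $\{0,\dots,I\}$, which, combined with the strictly convex quadratic form activated by the Dirichlet condition on $\Gamma^D$, is positive definite on the full space of variations. Uniqueness then forces every subsequential limit of $(U_\Tt^n,\phi_\Tt^n)$ to coincide with the constructed $(U_\Tt^\infty,\phi_\Tt^\infty)$, and by compactness of the state space the whole sequence converges, establishing~\eqref{eq:steady.conv}. I expect the most delicate step to be the limiting positivity $\tilde u_{0,K}>0$, since this is precisely what allows vanishing dissipation to pin down constant electrochemical potentials and close the LaSalle argument; a secondary subtlety is the rigorous verification of strict convexity of $\Psi_\Tt$, which is naturally packaged together with Theorem~\ref{theo:main.3}.
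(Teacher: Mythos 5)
Your proposal is correct in substance but reorganizes the argument relative to the paper. The paper obtains the steady state purely dynamically: it shows $\Hh_\Tt^n$ is decreasing and bounded, deduces $\Dd_\Tt^n\to 0$, extracts an accumulation point via LaSalle, shows all limiting fluxes vanish, and only then derives~\eqref{eq:steady.mu} through the four-case alternative (excluding the degenerate cases by the connectedness/mass argument you also use); the variational characterization and uniqueness are deferred to Proposition~\ref{prop:steady.2}. You instead construct $(U_\Tt^\infty,\phi_\Tt^\infty)$ first as the minimizer of $\Psi_\Tt$, which buys you a self-contained existence proof and an immediate uniqueness statement with which to upgrade subsequential to full convergence — a point on which the paper's own proof of~\eqref{eq:steady.conv} is arguably terse, since without uniqueness distinct accumulation points at the same energy level are not a priori excluded. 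The price is that you must verify coercivity of $\Psi_\Tt$ (discrete Poincar\'e in $y_\Tt$, and in $\boldsymbol\xi$ the comparison of the log-sum-exp term with the linear one, using both $\int_\O u_i^0>0$ and $\int_\O u_0^0>0$), which the paper never needs. One step you should make explicit: to run the positivity-by-contradiction argument at the limit you need the limiting fluxes to vanish edge by edge \emph{before} you know $\tilde u_{0,K}>0$, whereas your step deducing $\tilde w_{i,K}=\tilde w_{i,L}$ from the Slotboom form~\eqref{eq:scheme.Fi.2} presupposes that positivity. The clean way out, implicit in the paper, is the quantitative lower bound from the proof of Lemma~\ref{lem:L2locH1},
\begin{equation*}
\Dd_{i,\sig}^n \geq 4 a_\sig D_i\, \Rf\!\left(e^{-z_i\phi_K^n},e^{-z_i\phi_L^n}\right)\left( \sqrt{u_{i,K}^n u_{0,L}^n}\, e^{\frac{z_i}4 (\phi_K^n-\phi_L^n)} - \sqrt{u_{i,L}^n u_{0,K}^n}\, e^{\frac{z_i}4(\phi_L^n-\phi_K^n)} \right)^2,
\end{equation*}
whose right-hand side is continuous in $(U_\Tt,\phi_\Tt)$ without any positivity assumption and whose vanishing at the accumulation point is equivalent to $F_{i,K\sig}=0$ there; with that inserted, your outline closes. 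Your covariance-matrix justification of strict convexity is a valid alternative to the paper's Legendre-transform/H\"older argument.
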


\begin{proof}Owing to \eqref{eq:NRG.1} and the non-negativity of the discrete dissipations $\Dd^n_\Tt$, the sequence ${(\Hh_\Tt^n)}_{n\geq 0}$ is decreasing, and moreover bounded thanks to Lemma~\ref{lem:boundentropy}. Therefore, it converges towards some finite limit $\Hh_\Tt^\infty$, while $\Dd_\Tt^n$ has to tend to $0$ as $n$ goes to $+\infty$. Then so does $\Dd_{i,\sig}^n$, cf.~\eqref{eq:Disign}, for all $i=1,\dots, I$ and all $\sig = K|L \in \Ee_{\rm int}$. 
Since $\left(U_\Tt^n, \phi_\Tt^n\right)_{n\geq 0}$ is included in a bounded (thus relatively compact) set, the LaSalle invariance principle yields the existence of (at least) an accumulation point 
$\left(U_\Tt^\infty, \phi_\Tt^\infty\right)$, the energy of which being equal to $\Hh_\Tt^\infty$. 
Then it follows from the discrete energy / energy dissipation inequality~\eqref{eq:NRG.1} that the dissipation 
$\Dd_\Tt^\infty$ corresponding to $\left(U_\Tt^\infty, \phi_\Tt^\infty\right)$ is equal to $0$, then so are all the edge dissipation contributions $\Dd_{i,\sig}^\infty$ which are defined by \eqref{eq:Disign} but with $\left(U_\Tt^\infty, \phi_\Tt^\infty\right)$ instead of $\left(U_\Tt^n, \phi_\Tt^n\right)$. Since $\Dd_{i,\sig}^\infty = 0$ iff the corresponding flux 
$F_{i,K\sig}^\infty$ defined by~\eqref{eq:scheme.Fi.inf}
vanishes for all $i=1,\dots, I$ and $\sig = K|L \in \Ee_{\rm int}$, one gets that $\left(U_\Tt^\infty, \phi_\Tt^\infty\right)$ is a steady solution, and that the convergence \eqref{eq:steady.conv} holds true.

Let us now show \eqref{eq:steady.mu}. The condition $F_{i,K\sig}^\infty= 0$ for $\sig = K|L$ implies the following alternative:
\begin{enumerate}[(i)]
\item\label{item:00} $u_{0,K}^\infty = 0$ and $u_{0,L}^\infty=0$,
\item\label{item:ii} $u_{i,K}^\infty= 0$ and $u_{i,L}^\infty=0$,
\item\label{item:i0} $u_{i,K}^\infty = 0$ and $u_{0,K}^\infty=0$, or $u_{i,L}^\infty = 0$ and $u_{0,L}^\infty=0$, 
\item\label{item:mu} $\log\frac{u_{i,K}^\infty}{u_{0,K}^\infty} + z_i \phi_K^\infty = \log\frac{u_{i,L}^\infty}{u_{0,L}^\infty} + z_i \phi_L^\infty$.
\end{enumerate}
{This can be seen using the two formulations of the fluxes \eqref{eq:scheme.Fi.inf} and \eqref{eq:scheme.Fi.2} at the limit $n\to\infty$. The formula \eqref{eq:scheme.Fi.inf} clearly yields (i) to (iii) if any of the involved volume fractions vanish. If they are all positive then \eqref{eq:scheme.Fi.2} is well-defined at the limit $n\to\infty$  and yields (iv). }

Assume for contradiction that there exists $K\in\Tt$ such that $u_{0,K}^\infty = 0$, then $F_{i,K\sig}^\infty = 0$ for all $i$ requires that either $u_{i,K}^\infty = 0$ for all $1 \leq i \leq I$, which is incompatible with \eqref{eq:u0Kinf}, or that 
$u_{0,L}^\infty = 0$ for all neighboring cells $L$ sharing an edge with $K$. An induction shows relying on the connected character of $\O$ shows that $u_{0,\Tt}^\infty = 0$, which contradicts~\eqref{eq:constraint.mass}. The cases \eqref{item:00} and \eqref{item:i0} are therefore impossible, and $u_{0,\Tt}^\infty > 0$.

Assume now that $u_{i,K}^\infty = 0$ for some $K\in\Tt$, then as $u_{0,K}^\infty>0$, a similar reasoning shows that $u_{i,L}^\infty$ also has to be equal to $0$ for all neighboring cell of $K$. Then $u_{i,\Tt}^\infty = 0$, contradicting again \eqref{eq:constraint.mass}. Therefore, alternative~\eqref{item:ii} is not feasible, and \eqref{item:mu} has to hold true. 
This concludes the proof of Proposition~\ref{prop:steady.1}.
\end{proof}

Given $\phi_\Tt^\infty$ and $\bmu_\Tt^\infty$, one can invert the system \eqref{eq:u0Kinf}\&\eqref{eq:steady.mu}. As in the continuous case, one then gets that 
\[
u_{i,K}^\infty = v_i(\phi_K^\infty, \bmu_\Tt^\infty), \qquad K \in \Tt, \; 0 \leq i \leq I
\]
with $v_i$ defined in~\eqref{eq:vi}.
Incorporating this relation in~\eqref{eq:scheme.Poisson.inf} provides the following discrete counterpart to the modified Poisson--Boltzmann equation~\eqref{eq:cont.PoissonBoltzmann}:
\be\label{eq:scheme.PoissonBoltzmann}
\lambda^2 \sum_{\sig \in \Ee_K} a_\sig( \phi_K^\infty - \phi_{K\sig}^\infty) + m_K r(\phi_K^\infty, \bmu_\Tt^\infty) = m_K f_K, \qquad K\in\Tt,
\ee
to be complemented with the mass constraints \eqref{eq:constraint.mass}, with $r$ being the non-decreasing function defined in~\eqref{eq:r}.

\begin{prop}\label{prop:steady.2}
The solution $(\phi_\Tt^\infty, \bmu_\Tt^\infty)$ to~\eqref{eq:scheme.PoissonBoltzmann}\&\eqref{eq:constraint.mass} minimizes the strictly convex functional $\Psi_\Tt$ defined by~\eqref{eq:PsiTt}. In particular, $(\phi_\Tt^\infty, \bmu_\Tt^\infty)$ is unique. 
\end{prop}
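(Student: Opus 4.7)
The plan is to verify three things: that $\Psi_\Tt$ is convex, that it is in fact \emph{strictly} convex, and that its first-order optimality conditions coincide exactly with~\eqref{eq:scheme.PoissonBoltzmann}\&\eqref{eq:constraint.mass}. Once these are in hand, the solution to~\eqref{eq:scheme.PoissonBoltzmann}\&\eqref{eq:constraint.mass} provided by Proposition~\ref{prop:steady.1} appears as a critical point of $\Psi_\Tt$; since critical points of strictly convex functionals are unique global minimizers, both the minimizing character and the uniqueness assertion follow at once.

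First I would compute $\nabla \Psi_\Tt$ component by component. Taking $\partial_{y_K}$ produces the quadratic contribution $\lambda^2 \sum_{\sig \in \Ee_K} a_\sig (y_K - y_{K\sig})$, plus $m_K\, r(y_K, \bxi)$ from the log-sum-exp term (this is exactly the definition~\eqref{eq:r} of $r$), minus $m_K f_K$ from the linear piece; setting the sum to zero reproduces~\eqref{eq:scheme.PoissonBoltzmann} verbatim. Similarly, $\partial_{\xi_i} \Psi_\Tt = \sum_{K\in\Tt} m_K\, v_i(y_K, \bxi) - \sum_{K\in\Tt} m_K u_{i,K}^0$, which vanishes precisely when the mass-conservation constraint~\eqref{eq:constraint.mass} holds, after using the identity $u_{i,K}^\infty = v_i(\phi_K^\infty, \bmu_\Tt^\infty)$.

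For plain convexity, each contribution is handled separately. The quadratic form $\sum_\sig a_\sig (y_K - y_{K\sig})^2$ is manifestly convex in $y_\Tt$; the last bracket in~\eqref{eq:PsiTt} is affine; and for the log-sum-exp piece one writes $\log(1+\sum_i e^{\xi_i - z_i y_K}) = g(\xi_1 - z_1 y_K, \dots, \xi_I - z_I y_K)$, where $g:\R^I \to \R$, $g(\eta_1,\dots,\eta_I) = \log(1+\sum_i e^{\eta_i})$, is the classical (convex) log-sum-exp function, so convexity follows by pre-composition with an affine map. Summation over $K\in\Tt$ with positive weights $m_K$ preserves convexity.

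The main obstacle is strict convexity, because neither the quadratic form nor the log-sum-exp piece is alone strictly convex on the whole space $\R^\Tt \times \R^I$. A direct computation shows that the Hessian of $g$ acts on an increment $(\delta\eta_i)_{1\leq i \leq I}$ as the variance of the extended vector $(0,\delta\eta_1,\dots,\delta\eta_I)$ against the probability weights $(v_0, v_1, \dots, v_I)$; since these weights are all strictly positive (see~\eqref{eq:vi}) and the zeroth coordinate is clamped to $0$, this Hessian is in fact \emph{strictly} positive definite on $\R^I$. Consequently, any increment $(\delta y_\Tt, (\delta\xi_i))$ lying in the kernel of the Hessian of the log-sum-exp contribution to $\Psi_\Tt$ must satisfy $\delta\xi_i = z_i\, \delta y_K$ for every $K\in\Tt$ and every $1\leq i \leq I$, which forces $\delta y_\Tt$ to be constant over the mesh. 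On the other hand, the quadratic form in $y_\Tt$ is strictly positive definite as soon as $\Gamma^D$ has positive $(d-1)$-Hausdorff measure, thanks to the discrete Poincar\'e inequality (the same one underlying the norm on $V$ in Section~\ref{ssec:weak}). Any vector in the kernel of the total Hessian must therefore have $\delta y_\Tt \equiv 0$, and then all $\delta\xi_i$ vanish too. This establishes strict convexity and concludes the proof.
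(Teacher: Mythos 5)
Your proposal is correct and follows the paper's overall strategy — identify the system \eqref{eq:scheme.PoissonBoltzmann}\&\eqref{eq:constraint.mass} with the criticality conditions $\nabla\Psi_\Tt=0$ (your gradient computation matches the paper's verbatim), then deduce minimization and uniqueness from strict convexity — but you establish strict convexity by a genuinely different argument. You work at second order: you compute the Hessian of the log-sum-exp kernel $g(\eta)=\log(1+\sum_i e^{\eta_i})$ as the covariance form $\sum_{i}v_i\,\delta\eta_i^2-(\sum_i v_i\,\delta\eta_i)^2$ of the extended vector $(0,\delta\eta_1,\dots,\delta\eta_I)$ under the strictly positive weights $(v_0,\dots,v_I)$, conclude that this Hessian is positive definite on $\R^I$ because the clamped zeroth coordinate pins the ``constant'' kernel direction, and then intersect the kernels of the two positive semidefinite blocks, using the discrete Poincar\'e inequality (with $\Ee^D_{\rm ext}\neq\emptyset$) to kill $\delta y_\Tt$ and hence $\delta\bxi$. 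The paper instead works at first order, showing strict monotonicity of $D\Psi_\Tt$: it splits $(D\Psi_\Tt(y,\bxi)-D\Psi_\Tt(\tilde y,\tilde\bxi))\cdot(y-\tilde y,\bxi-\tilde\bxi)$ into a quadratic term $A$ (handled by the same discrete Poincar\'e inequality) and a term $B$ recognized as the monotonicity gap of $\nabla H^*$, where $H^*$ is the Legendre transform of the mixing entropy $H$; strict convexity of $H^*$ is then obtained from the equality case in H\"older's inequality. The two arguments are equivalent in substance — your variance identity is exactly the Hessian of $H^*$, and the H\"older equality case plays the role of your ``variance vanishes iff the variable is a.s.\ constant'' step — but yours is more elementary and self-contained, while the paper's makes the link with the entropy structure explicit. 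One small imprecision: your parenthetical claim that the kernel condition $\delta\xi_i=z_i\,\delta y_K$ ``forces $\delta y_\Tt$ to be constant over the mesh'' holds only if some $z_i\neq 0$; this does not affect your conclusion, since the very next step uses the Dirichlet quadratic form to force $\delta y_\Tt\equiv 0$ outright, after which $\delta\xi_i=0$ follows, so you could simply delete that remark.
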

\begin{proof}
One readily checks that 
\[
\frac{\partial \Psi_\Tt}{\partial {y}_K}({y}_\Tt, \boldsymbol \xi) = \lambda^2 \sum_{\sig \in \Ee_K} a_\sig( {y}_K - {y}_{K\sig}) + m_K r({y}_K, \boldsymbol \xi) - m_K f_K, 
\qquad K \in \Tt. 
\]
and that 
\[
\frac{\partial \Psi_\Tt}{\partial \xi_i }({y}_\Tt, \boldsymbol \xi) = 
\sum_{K\in\Tt} m_K [v_i({y}_K, \boldsymbol \xi) - u_{i,K}^0].
\]
Bearing in mind the definition~\eqref{eq:uiK0} of $u_{i,K}^0$, one gets that $(\phi_\Tt^\infty, \mu_\Tt^\infty)$ solves~\eqref{eq:scheme.PoissonBoltzmann}\&\eqref{eq:constraint.mass} if and only if it is a critical point of $\Psi_\Tt$.  In particular, Proposition~\ref{prop:steady.1} guarantees the existence of such a critical point as it ensures the existence of a solution to ~\eqref{eq:scheme.PoissonBoltzmann}\&\eqref{eq:constraint.mass}. 

To conclude the proof of Proposition~\ref{prop:steady.2}, it remains to show the strict convexity of $\Psi_\Tt$. 
Let $({y}_\Tt,\bxi_\Tt)$ and  $({\tilde y}_\Tt, \tilde \bxi_\Tt)$ be two elements of $\R^\Tt \times \R^I$. Then 
\be\label{eq:poipoi}
\left(D\Psi_\Tt({y}_\Tt,\bxi_\Tt) - D\Psi_\Tt({\tilde y}_\Tt, \tilde \bxi_\Tt) \right) \cdot ({y}_\Tt-{\tilde y}_\Tt,\bxi_\Tt - \tilde \bxi_\Tt) = A+B, 
\ee
where 
\begin{align*}
A =\; & \lambda^2\sum_{K\in\Tt}({y}_K-{\tilde y}_K) \sum_{\sig \in \Ee_K} a_\sig( ({y}_K-{\tilde y}_K) - ({y}_{K\sig} - {\tilde y}_{K\sig})), \\
B =\; & \sum_{K\in\Tt} m_K [r({y}_K, \boldsymbol \xi) - r({\tilde y}_K, \tilde \bxi )] ({y}_K-{\tilde y}_K)   \\
& +  \sum_{K\in\Tt}  \sum_{i=1}^I m_K[(v_i({y}_K, \boldsymbol \xi)-v_i({\tilde y}_K, \tilde \bxi)](\xi_i - \tilde \xi_i).
\end{align*}
Performing a discrete integration by parts gives 
\[
A =  \lambda^2 \sum_{\sig \in \Ee} a_\sig ( ({y}_K-{\tilde y}_K) - ({y}_{K\sig} - {\tilde y}_{K\sig}))^2 \geq 0. 
\]
As we assumed that {$\Ee_\text{ext}^D$} is not empty, it follows from the discrete Poincaré inequality (see Lemma 9.1 and Remark 9.4 of \cite{Eymard2000}) that 
\be\label{eq:poipoi.1}
\text{$A$ vanishes if and only if ${y}_\Tt = {\tilde y}_\Tt$.}
\ee
Using the relation~\eqref{eq:r} between $r$ and $v_i$, we rewrite the term $B$ as 
\[
B = \sum_{K\in\Tt}  m_K \sum_{i=1}^I \big(v_i({y}_K, \bxi) - v_i({\tilde y}_K, \tilde \bxi)\big) \big(\xi_i - z_i {y}_K - (\tilde \xi_i - z_i {\tilde y}_K)\big).
\]
Denote by 
\[
H^*({R}) = \sup_{U \in \Aa} \left[ \sum_{i=1}^I u_i {r}_i - H(U)\right] = \log\left( 1 + \sum_{i=1}^I e^{{r}_i} \right), \qquad {R} = \left({r}_i\right)_{1\leq i \leq I} \in \R^I, 
\]
the Legendre transform of the mixing entropy functional $H$, cf. \eqref{eq:H}, then, setting 
\[
{R}_K = \left({r}_{i,K}\right)_{1\leq i \leq I} =   {(\xi_i - z_i {y}_K)}_{1\leq i \leq I}, \qquad K \in \Tt, 
\]
the term $B$ is rewritten as 
\be\label{eq:poipoi.2}
B = \sum_{K\in\Tt}  m_K \sum_{i=1}^I\left\langle\frac{\partial H^*}{\partial {r}_i}({R}_K) - \frac{\partial H^*}{\partial {r}_i}({\tilde R}_K), {R}_K-{\tilde R}_K\right\rangle.
\ee
As $H^*$ is a Legendre transform, it is by construction convex and thus $B \geq 0$. {By setting ${r}_{0,K} = {\tilde r}_{0,K} = 0$, the convexity of $H^*$ can also be seen as a consequence of Holder's inequality
\[
\sum_{j=0}^I e^{\theta {r}_{j,K} + (1-\theta){\tilde r}_{j,K}}\leq \left(\sum_{j=0}^I e^{{r}_{j,K}}\right)^\theta\left(\sum_{j=0}^I e^{{\tilde r}_{j,K}}\right)^{1-\theta}
\]
for $\theta\in [0,1]$. The inequality is an equality only if $(e^{{r}_{j,K}})_j$ and $(e^{{\tilde r}_{j,K}})_j$ are linearly dependent. As ${r}_{0,K} = {\tilde r}_{0,K}$ it can only happen if ${R}_K = {\tilde R}_K$. In other words $H^*$ is actually strictly convex.}

It follows from the above discussion that the right-hand side of \eqref{eq:poipoi} vanishes if and only if both $A$ and $B$ vanish. Thanks to  \eqref{eq:poipoi.1}, this requires that ${y}_\Tt = {\tilde y}_\Tt$.
On the other hand because of the strict convexity of $H^*$, the term $B$ \eqref{eq:poipoi.2} vanishes if and only if ${R}_\Tt = {\tilde R}_\Tt$, thus if $\bxi = \tilde \bxi$. Then $\Psi_\Tt$ is strictly convex and its minimizer $(\phi_\Tt^\infty, \bmu_\Tt^\infty)$ is unique. 
\end{proof}
\section{Numerical results}\label{sec:num}
    
The non-linear system corresponding to the scheme is solved thanks to a Newton--Raphson method with stopping criterion 
${\| \mathcal{F}_\Tt^n(  (U_K^n)_{K\in\Tt}, (\phi_K^n)_{K\in\Tt})\|}_\infty \leq 10^{-10}$, the components of $\mathcal{F}_\Tt^n$ 
 being given by the left-hand side of \eqref{eq:fvscheme_conserv}.
\subsection{Convergence under grid refinement}
The goal of our first numerical test is to show our scheme is second order accurate w.r.t. the mesh size. To this end, we consider the one-dimensional domain $\O = (0,1)$, in which $I=2$ different ions evolve, both with the same diffusion coefficient $D_1=D_2 =1$. Their (normalized) charge is set to  $z_1 = 2$ and $z_2 = 1$, yielding repulsive interaction. No background charge is considered, i.e. $f=0$, whereas Dirichlet boundary conditions are imposed for the electric potential on both sides of the interval, that are  $\phi^D(t,0) = 10$ and $\phi^D(t,1) = 0$. We consider a moderately small Debye length $\lambda^2=10^{-2}$. We start at initial time $t=0$ with the following configurations:
$u_1^0(x)= 0.2 + 0.1(x-1)$ and $u_2^0 \equiv 0.4$.

\begin{figure}[htb]
\begin{minipage}{.49\textwidth}
\begin{center}
\begin{tikzpicture}
	\begin{axis}[
	xlabel=Space variable $x$,
	ylabel=Concentration,
	legend style={
		legend pos= north west,
	},
	width=\linewidth]
	
	\addplot[color=blue] table[x=x, y=u1] {results_long/snapshots_SQRA.txt};
	\addlegendentry{$u_1$}
	\addplot[color=red] table[x=x, y=u2] {results_long/snapshots_SQRA.txt};
	\addlegendentry{$u_2$}
	\addplot[color=black] table[x=x, y=u0] {results_long/snapshots_SQRA.txt};
	\addlegendentry{$u_0$}
	
	\addplot[color=blue, dashed] table[x=x, y=u1] {results_long/snapshots_SQRA_5e3.txt};
	\addplot[color=red, dashed] table[x=x, y=u2] {results_long/snapshots_SQRA_5e3.txt};
	\addplot[color=black, dashed] table[x=x, y=u0] {results_long/snapshots_SQRA_5e3.txt};
	\end{axis}
\end{tikzpicture}
\end{center}
\end{minipage}
\begin{minipage}{.49\textwidth}
\begin{center}
\begin{tikzpicture}
	\begin{axis}[
	xlabel=Space variable $x$,
	ylabel=Electric potential,
	legend style={
		legend pos= north east,
	},
	width=\linewidth]
	
	\addplot[color=brown] table[x=x, y=phi] {results_long/snapshots_SQRA.txt};
	\addplot[color=brown, dashed] table[x=x, y=phi] {results_long/snapshots_SQRA_5e3.txt};	
	\addlegendentry{$\phi$}
	\end{axis}
\end{tikzpicture}
\end{center}
\end{minipage}
\caption{Left: Concentration profiles $u_1(T,x), u_2(T,x)$ and $u_0(T,x)$ at time $T = 1$ depicted with solid lines. The corresponding long-time limit $(u_1^\infty, u_2^\infty, u_3^\infty)$ is depicted with dashed lines. Right: Electric potential profile $\phi(T,x)$ at time $T = 1$ (solid line) and in the long-time limit (dashed line). }
\label{fig:potential}
\end{figure}
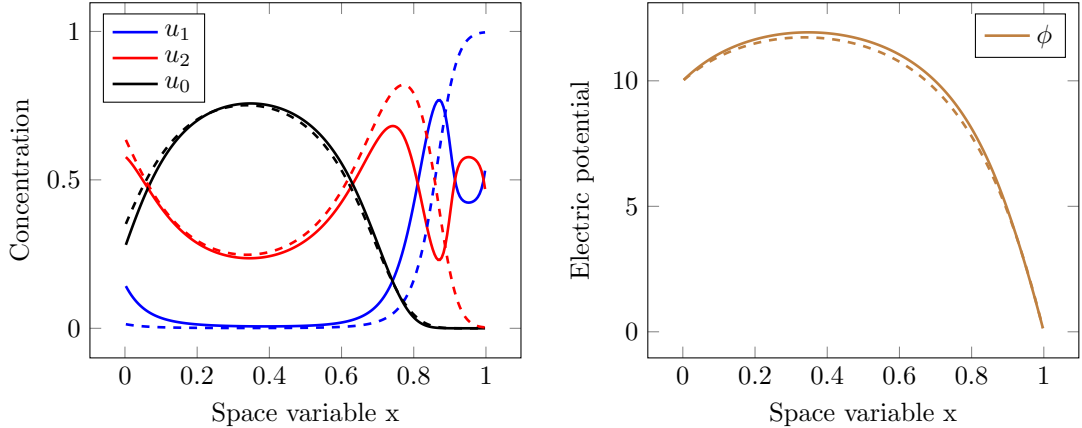

A reference solution is computed on a grid made of $1638400$ cells and with a constant time step $\tau = 10^{-3}$, to which are compared solutions computed on successively refined grids but with the same constant time step. 
The profile of the solution at the final time $T=1$ is depicted on Figures~\ref{fig:potential}. 
The relative space-time $L^1$ error is plotted as a function of the number of cells on Figure~\ref{fig:conv}, showing some second order accuracy in space, as specified in the introductory discussion. 
The number of Newton iterations required to solve a time step remains between 6 for the very first iterations and 2 for larger times and is mainly insensitive to the mesh size. 

\begin{figure}[htb]
\begin{center}
\begin{tikzpicture}
	\begin{loglogaxis}[
	xlabel=number of cells,
	ylabel=relative $L^1_t(L^1_x)$ error,
	legend style={
		legend pos= north east,
	},
	width=0.45\linewidth]
	
	\addplot[mark=square*, color=blue] table[x=NbCells, y=errL1L1] {results_long/conv_SG.txt};
	
	\logLogSlopeTriangle{0.1}{-0.4}{0.1}{2}{black};
	\end{loglogaxis}
\end{tikzpicture}
\end{center}
\caption{Convergence of the schemes under space grid refinement.}
\label{fig:conv}
\end{figure}
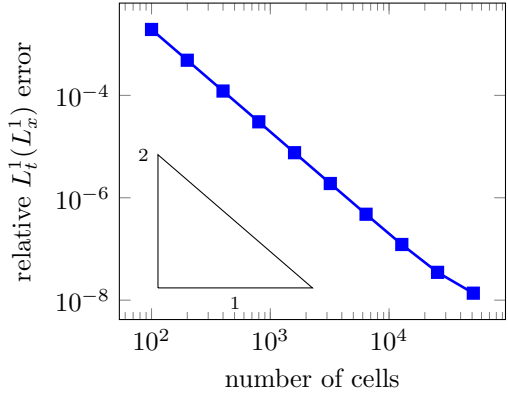

\begin{rema}
An optimal error estimate showing order 2 in space and order 1 in time convergence can be established on cartesian grids if one assumes that there exists a smooth strong solution to the continuous problem, see for instance~\cite{CV23} in a scalar context, or~\cite{DWZ23} for classical Poisson--Nernst--Planck system. The extension to unstructured grids is then much more delicate, as usual for finite volumes, see for instance~\cite{DN18} for a contribution in this direction. However, the existence of such a strong solution to the problem is an open question.
\end{rema}

\subsection{Long-time behavior of the scheme}
We now focus on the long-time behavior of the scheme, and more specifically on the convergence towards its unique steady state. Our study relies on a two-dimensional test case, with $I=3$ charged species (plus the solvent $u_0$). 
The corresponding charges are $z_1 = 2$, $z_2 = 1$, $z_3 = -1$ (and $z_0 = 0$ as the solvent is electrically neutral in our model), while the diffusion coefficients are set to $D_1 = 1$, $D_2 = 2$, and $D_3 = 2$.

The geometry is the following : $\O$ is the unit square $(0,1)^2$, the boundary of which being split into 
$\Gamma^D = \{y=1\} \times \{0 \leq x \leq 1/2\}$ and its complement $\Gamma^N$. The electric potential is set to $0$ on $\Gamma^D$, i.e. $\phi^D = 0$.

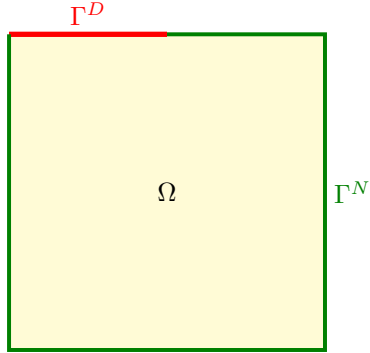
\begin{figure}[htb]
\begin{center}
\begin{tikzpicture}[scale = .7]
\draw[line width = 1pt, color = white, fill = yellow!20!white] (0,0) -- (0,6) -- (6,6) -- (6,0) -- cycle;
\draw[line width = 2pt, color = red] (0,6) -- (3,6);
\draw[line width = 1.5pt, color = green!50!black] (3,6) -- (6,6) -- (6,0) -- (0,0) -- (0,6);

\draw (3,3) node {$\O$};
\draw (1.5,6) node[above]{$\color{red} \Gamma^D$};
\draw (6,3) node[right]{$\color{green!50!black} \Gamma^N$};
\end{tikzpicture}
\end{center}
\caption{Geometry of the two-dimensional domain}\label{fig:domain2D}
\end{figure}

We run our scheme on a Delaunay mesh made of 7374 triangles and 11167 edges generated with Gmsh for $3$ different initial concentration profiles. Two initial profiles $U^{0,(1)}$ and $U^{0,(2)}$ are chosen globally neutral: 
\be\label{eq:num.u0.1}
\begin{cases}
u_1^{0,(1)}(x) = 0.3 \times\mathbbm{1}_{(0,1/2)^2}(x), \\ 
u_2^{0,(1)}(x) = 0.3\times\mathbbm{1}_{(1/2,1)\times(0,1/2)}(x), \\
u_{{3}}^{0,(1)}(x) = 0.9\times\mathbbm{1}_{(1/2,1)^2}(x).
\end{cases}
\ee
and 
\be\label{eq:num.u0.2}
\begin{cases}
u_1^{0,(2)}(x) = 0.1\times u_1^{0,(1)}(x), \\
u_2^{0,(2)}(x) = 0.1\times u_{{2}}^{0,({1})}{(x)} + 0.9\times\mathbbm{1}_{(0,1)\times(1/2,1)}(x),\\
u_{{3}}^{0,(2)}(x) = 0.1\times u_{{3}}^{0,({1})}{(x)} + 0.9\times\mathbbm{1}_{(0,1)\times(0,1/2)}(x).
\end{cases}
\ee
A third initial configuration is chosen globally charged and constant in space: 
\be\label{eq:num.u0.3}
u_1^{0,(3)}(x) = 0.2, \quad 
u_2^{0,(3)}(x) = 0.2, \quad 
u_{{3}}^{0,(3)}(x) = 0.3.
\ee
The steady state corresponding to \eqref{eq:num.u0.3} is not constant w.r.t. space, as shown on Figure~\ref{fig:u3stat}. 
\begin{figure}[htb]
\centering
\begin{tabular}{cc}
\begin{tikzpicture}
\pgftext{\includegraphics[width=6cm]{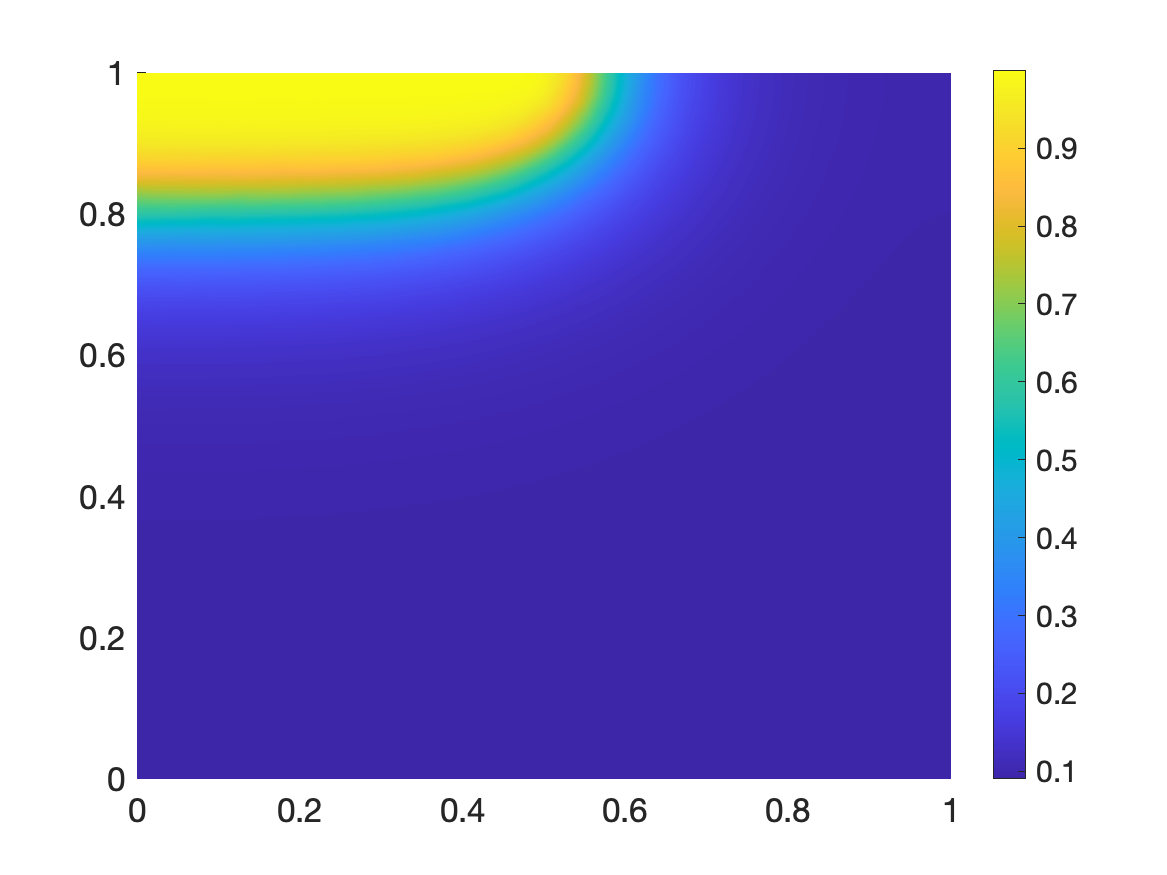}}

\draw[color = white, fill = white] (-2,2) -- (2,2) -- (2,2.5) -- (-2,2.5) -- (-2,2);

\end{tikzpicture}& 
\begin{tikzpicture}
\pgftext{
\includegraphics[width=6cm]{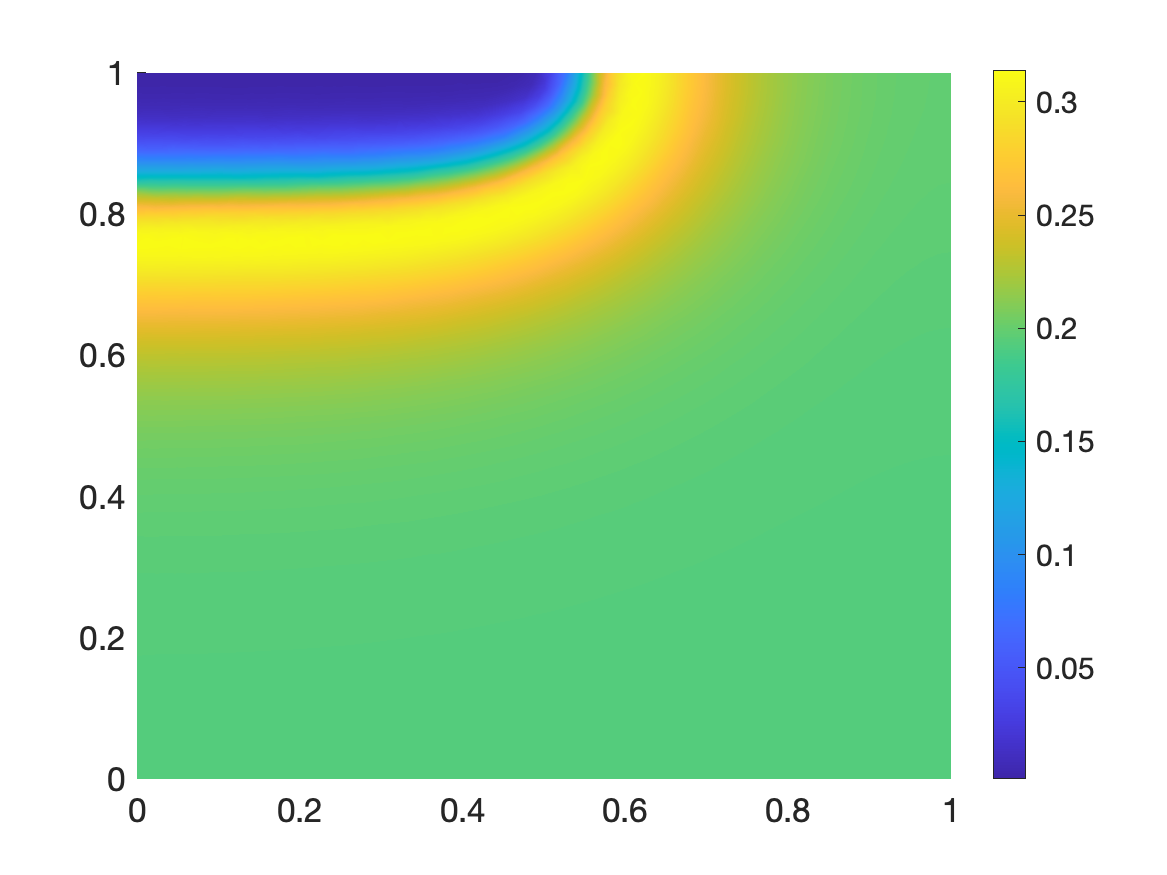}}

\draw[color = white, fill = white] (-2,2) -- (2,2) -- (2,2.5) -- (-2,2.5) -- (-2,2);
\end{tikzpicture}
\\
$u_1^\infty, \; z_1 = 2$ & 
$u_2^\infty, \; z_2 = 1$ \\
\begin{tikzpicture}
\pgftext{
\includegraphics[width=6cm]{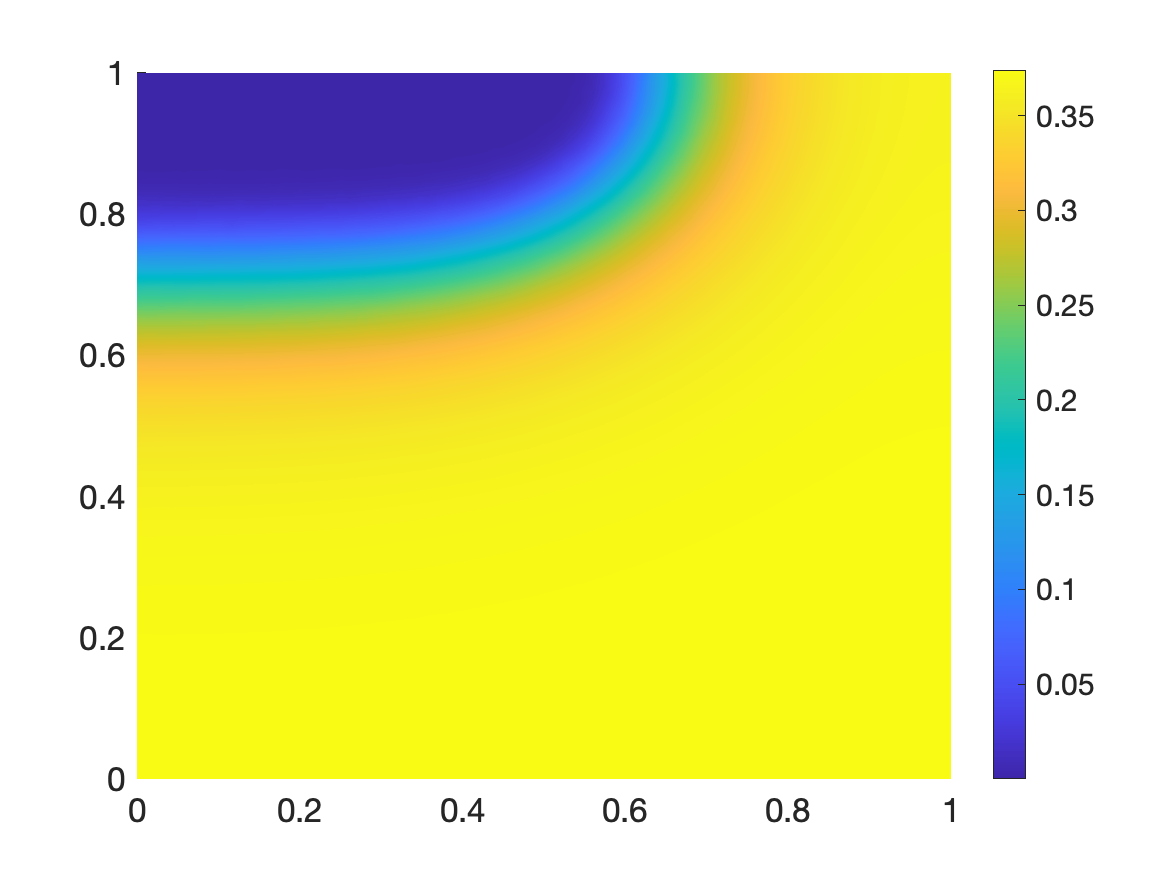}
}
\draw[color = white, fill = white] (-2,2) -- (2,2) -- (2,2.5) -- (-2,2.5) -- (-2,2);
\end{tikzpicture}
& 
\begin{tikzpicture}
\pgftext{
\includegraphics[width=6cm]{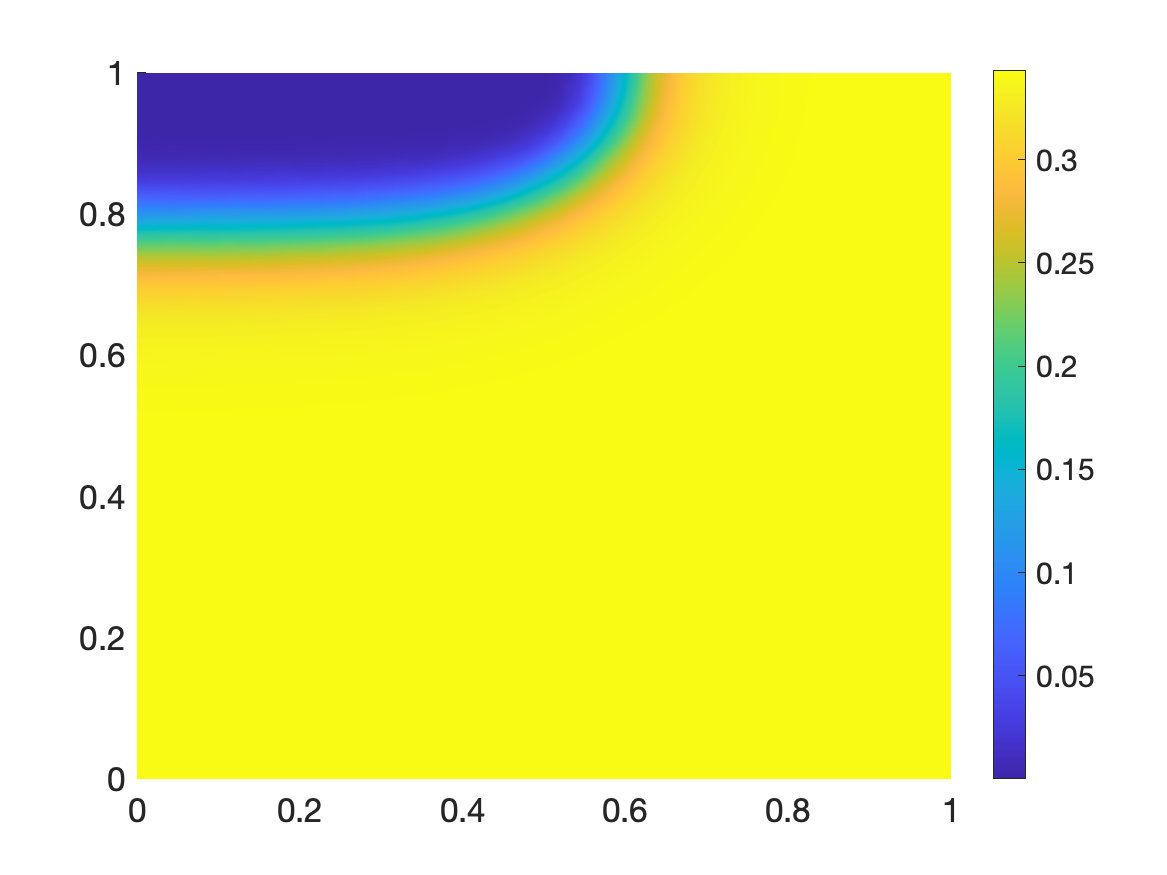} }
\draw[color = white, fill = white] (-2,2) -- (2,2) -- (2,2.5) -- (-2,2.5) -- (-2,2);
\end{tikzpicture}
\\
$u_3^\infty, \; z_3 = -1$ & 
$u_0^\infty, \; z_0 = 0$ 
\end{tabular}
\caption{Stationary concentration profiles corresponding to the initialization \eqref{eq:num.u0.3} for $\lambda^2 = 0.01 $}\label{fig:u3stat}
\end{figure}

We run our scheme with a constant time step $\tau = 10^{-4}$ and for two different Debye lengths until a final time $T=3$, and look for the evolution of the relative energy $\Hh_\Tt^n - \Hh_\Tt^\infty$ along-time. The relative energy is decaying for all the curves plotted on Figure~\ref{fig:EntRel}, but the velocity at which the decay occurs varies strongly depending on the Debye length and on the initial profile. In the electroneutral configurations, we mainly observe that higher quantities of $u_0$ yield a faster convergence. This is expected as the dissipation \eqref{eq:cont.dissip} contains a $u_0^2$ prefactor, in opposition to a mere $u_0$ for the classical Poisson--Nernst--Planck system. Then, still in the electroneutral regime, the smaller is $\lambda^2$ then the faster is the convergence towards the constant in space equilibrium. 
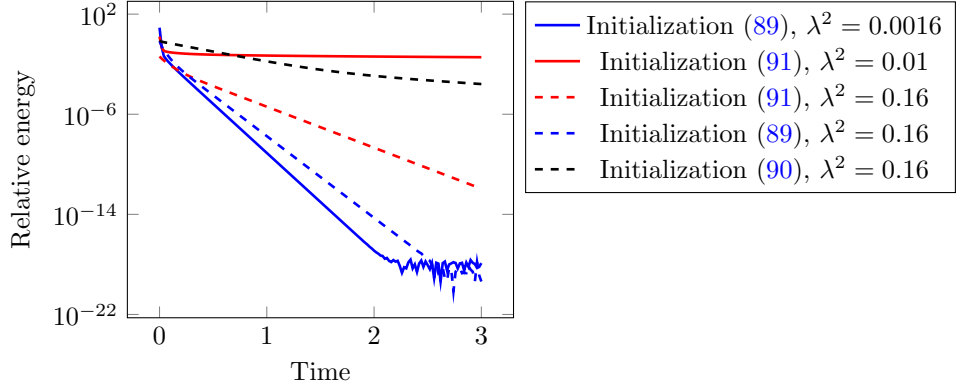
\begin{figure}[htb]
\begin{center}
\begin{tikzpicture}
	\begin{semilogyaxis}[
	xlabel=Time,
	ylabel=Relative energy,
	legend style={
		legend pos= outer north east,
	},
	width=0.45\linewidth]
	
	\addplot[color=blue] table[x=time, y=NRG_rel_abs] {results_long/TempsLong2D_EN1_14.txt};
	\addplot[color=red] table[x=time, y=NRG_rel_abs] {results_long/TempsLong_2D_C_14.txt};
	\addplot[color=red, dashed] table[x=time, y=NRG_rel_abs] {results_long/TempsLong_2D_C_14_lambdabig.txt};
	\addplot[color=blue, dashed] table[x=time, y=NRG_rel_abs] {results_long/TempsLong_2D_EN_14_lambdabig.txt};
	\addplot[color=black, dashed] table[x=time, y=NRG_rel_abs] {results_long/TempsLong2D_EN2_14_lambdabig.txt};
	\addlegendentry{Initialization \eqref{eq:num.u0.1}, $\lambda^2 = 0.0016$}
	\addlegendentry{Initialization \eqref{eq:num.u0.3}, $\lambda^2 = 0.01$}
	\addlegendentry{Initialization \eqref{eq:num.u0.3}, $\lambda^2 = 0.16$}
	\addlegendentry{Initialization \eqref{eq:num.u0.1}, $\lambda^2 = 0.16$}
	\addlegendentry{Initialization \eqref{eq:num.u0.2}, $\lambda^2 = 0.16$}
	\end{semilogyaxis}
\end{tikzpicture}
\end{center}
\caption{Convergence towards the steady long-time behavior in terms of relative energy $\Hh_\Tt^{n} - \Hh_\Tt^\infty$.}
\label{fig:EntRel}
\end{figure}
The trend is different for globally charged profiles. For the choice~\eqref{eq:num.u0.3} of the initial profile, the convergence becomes extremely slow when $\lambda^2$ becomes small. This is due to the fact that, in some areas, the concentration of $u_0$ becomes extremely small. The equilibration between the other phases is then extremely slow in these regions. To illustrate this difficulty, we depict on Figure~\ref{fig:u3_final} the concentration profiles at $T=3$. The solvent concentration $u_0$ is already in good agreement with the equilibrium profile depicted on Figure~\ref{fig:u3stat} (see also Figure~\ref{fig:u3error} for a representation of the difference between $U_{\Tt,\tau}(T)$ and $U_\Tt^\infty$), but this is not the case of the other concentrations, the evolution of which being stuck in the zones where the concentration of $u_0$ is small. Obtaining a fine quantitative understanding of the long-time behavior of the model should be investigated in future works. 
\begin{figure}[htb]
\centering
\begin{tabular}{cc}
\begin{tikzpicture}
\pgftext{
\includegraphics[width=6cm]{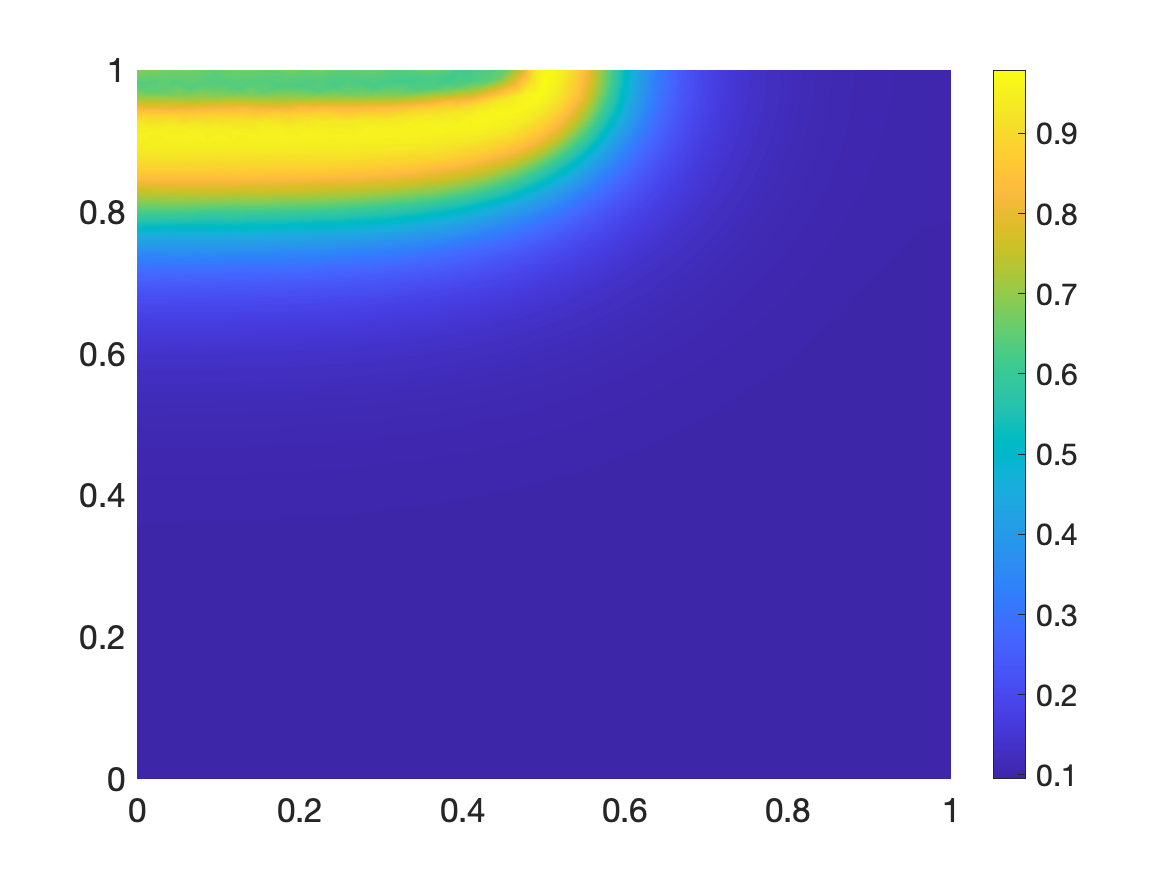}}
\draw[color = white, fill = white] (-2,2) -- (2,2) -- (2,2.5) -- (-2,2.5) -- (-2,2);
\end{tikzpicture}& 
\begin{tikzpicture}
\pgftext{
\includegraphics[width=6cm]{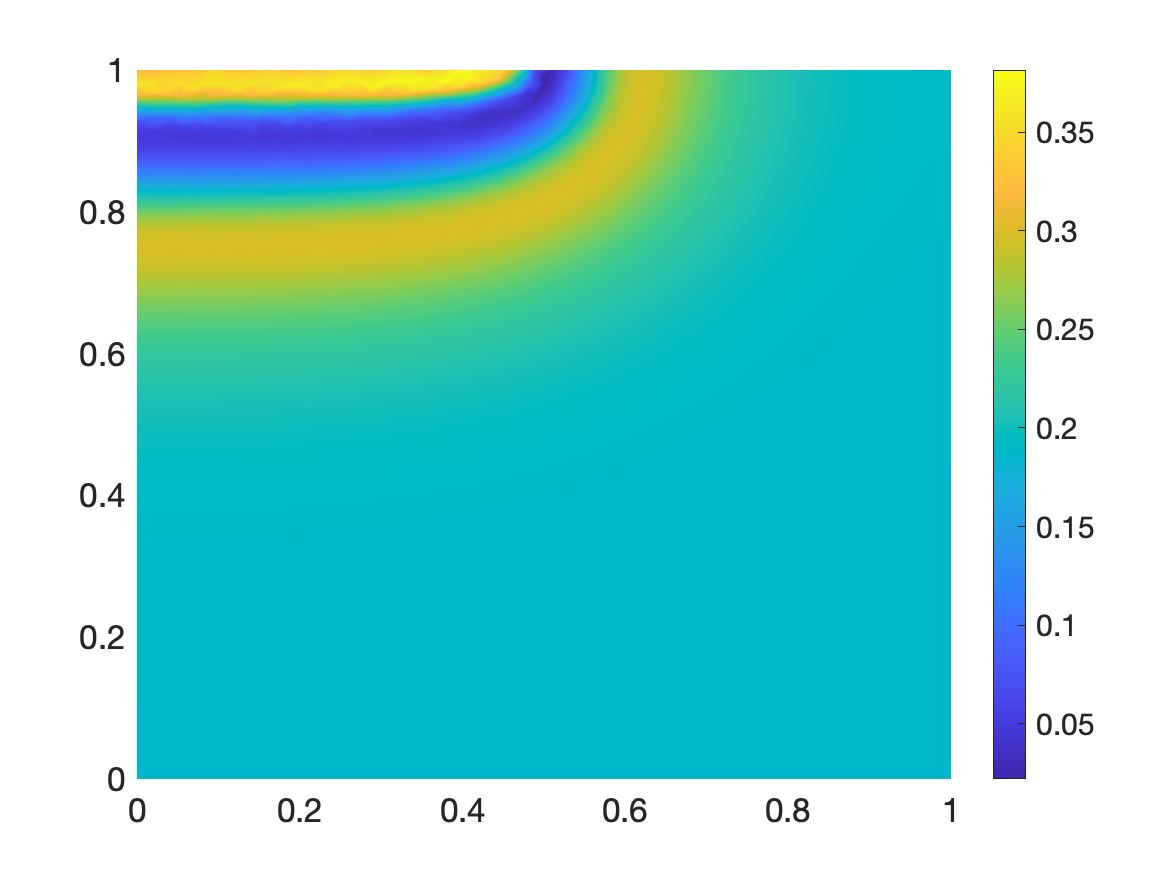}}
\draw[color = white, fill = white] (-2,2) -- (2,2) -- (2,2.5) -- (-2,2.5) -- (-2,2);
\end{tikzpicture} \\
$u_1(T), \; z_1 = 2$ & 
$u_2(T), \; z_2 = 1$ \\
\begin{tikzpicture}
\pgftext{
\includegraphics[width=6cm]{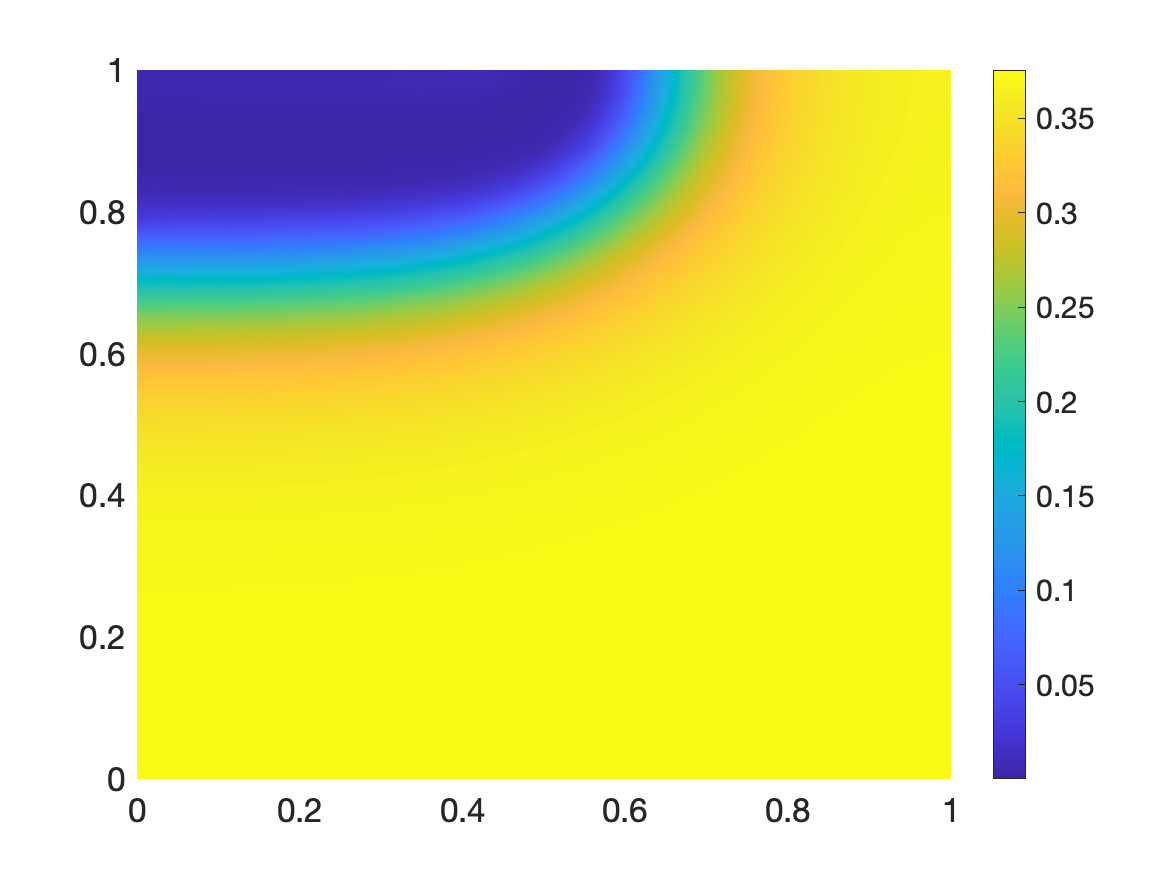}}
\draw[color = white, fill = white] (-2,2) -- (2,2) -- (2,2.5) -- (-2,2.5) -- (-2,2);
\end{tikzpicture} & 
\begin{tikzpicture}
\pgftext{
\includegraphics[width=6cm]{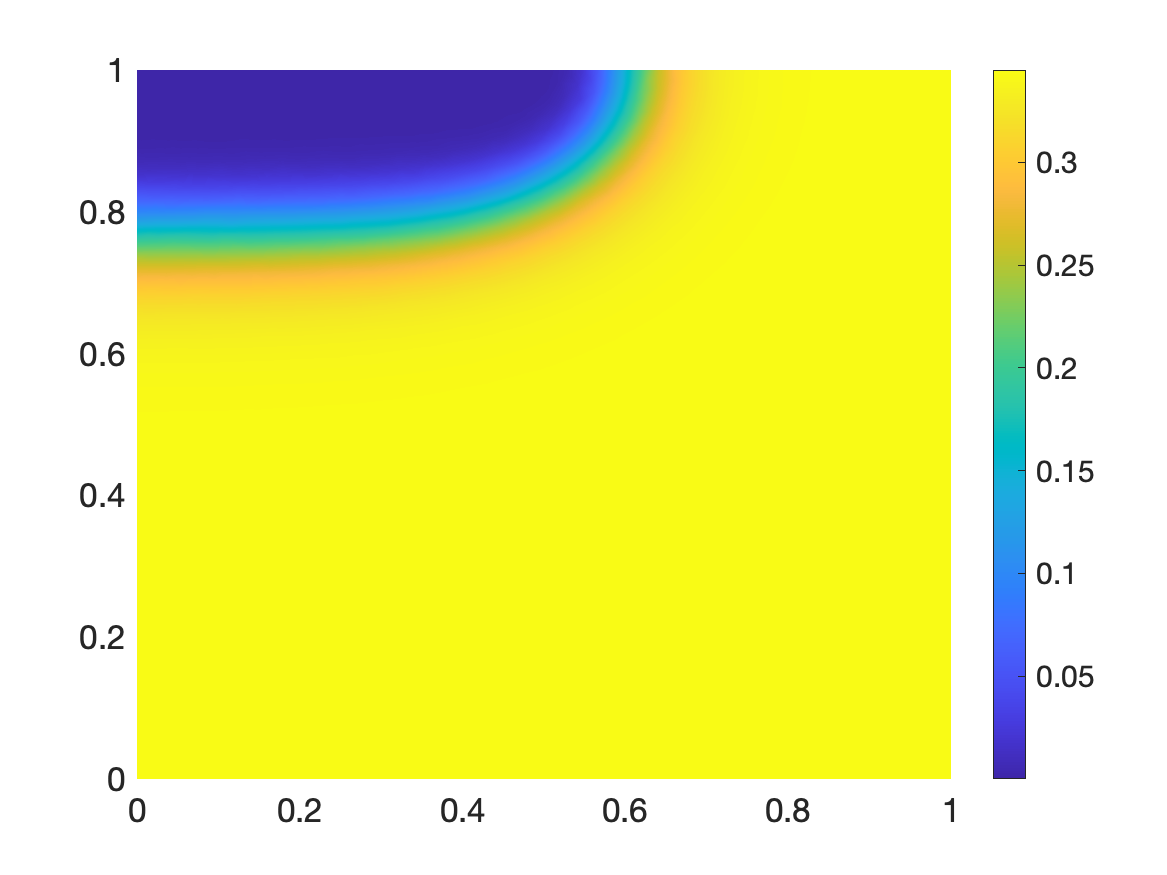}}
\draw[color = white, fill = white] (-2,2) -- (2,2) -- (2,2.5) -- (-2,2.5) -- (-2,2);
\end{tikzpicture} 
\\
$u_3(T), \; z_3 = -1$ & 
$u_0(T), \; z_0 = 0$ 
\end{tabular}
\caption{Concentration profiles at final time $T=3$ corresponding to the initialization \eqref{eq:num.u0.3} for $\lambda^2 = 0.01$.}\label{fig:u3_final}
\end{figure}

\begin{figure}[htb]
\centering
\begin{tabular}{cc}
\begin{tikzpicture}
\pgftext{
\includegraphics[width=6cm]{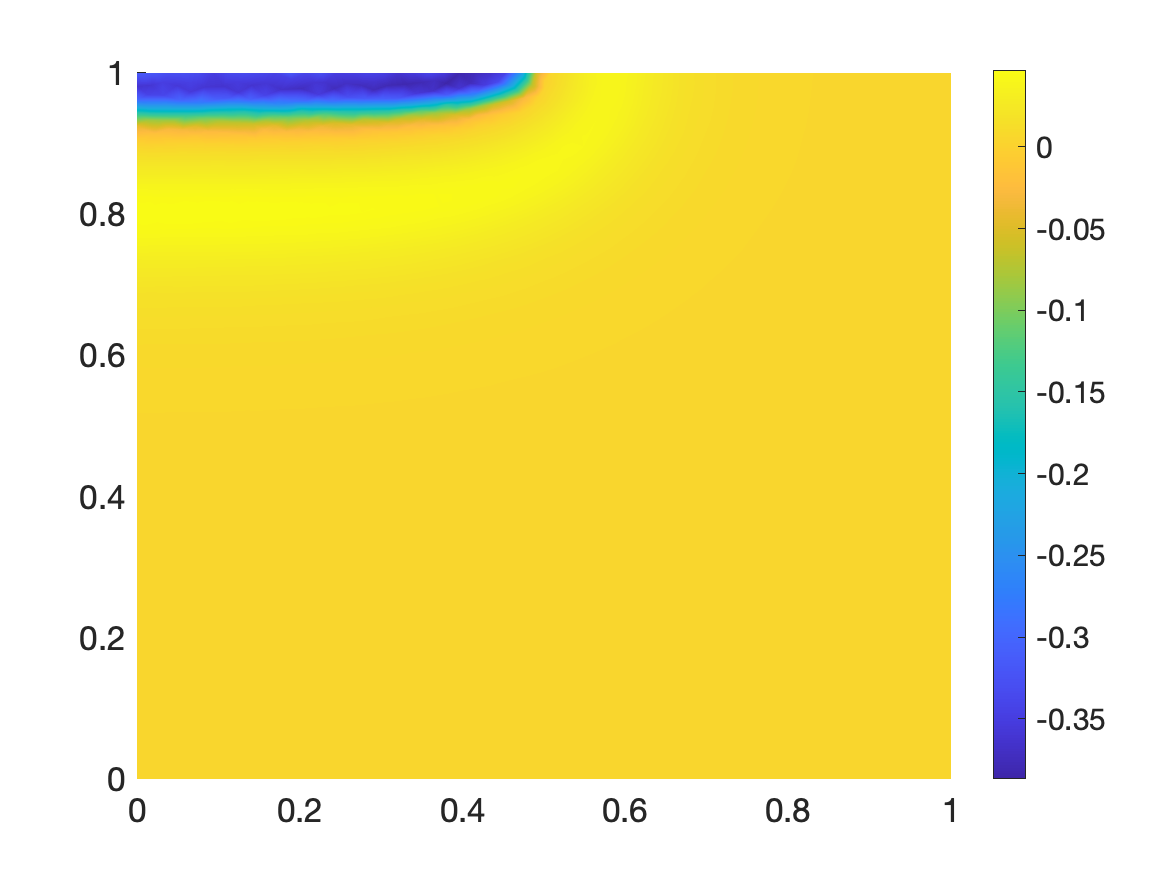}}
\draw[color = white, fill = white] (-2,2) -- (2,2) -- (2,2.5) -- (-2,2.5) -- (-2,2);
\end{tikzpicture}
& 
\begin{tikzpicture}
\pgftext{
\includegraphics[width=6cm]{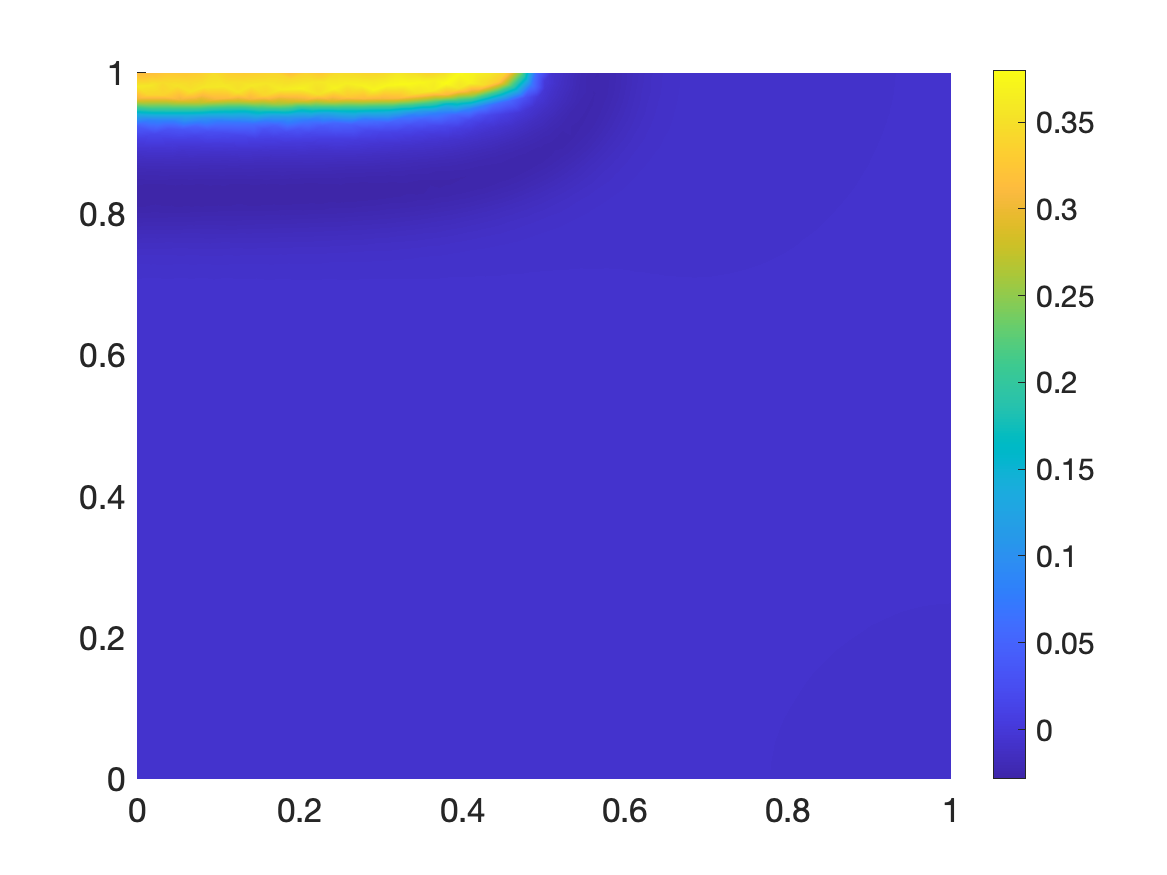}}
\draw[color = white, fill = white] (-2,2) -- (2,2) -- (2,2.5) -- (-2,2.5) -- (-2,2);
\end{tikzpicture} \\
$u_1(T) - u_1^\infty, \; z_1 = 2$ &
$u_2(T) - u_2^\infty, \; z_2 = 1$ \\
\begin{tikzpicture}
\pgftext{
\includegraphics[width=6cm]{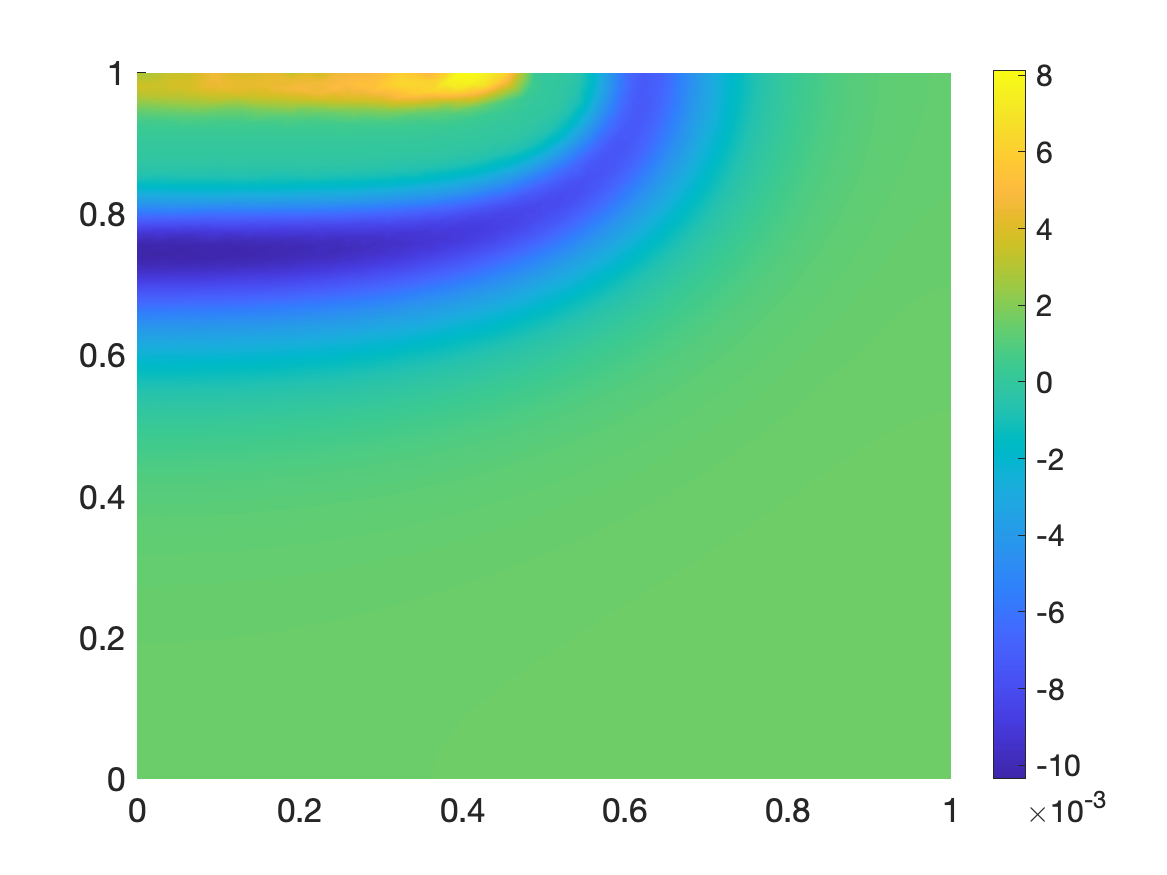}}
\draw[color = white, fill = white] (-2,2) -- (2,2) -- (2,2.5) -- (-2,2.5) -- (-2,2);
\end{tikzpicture} & 
\begin{tikzpicture}
\pgftext{
\includegraphics[width=6cm]{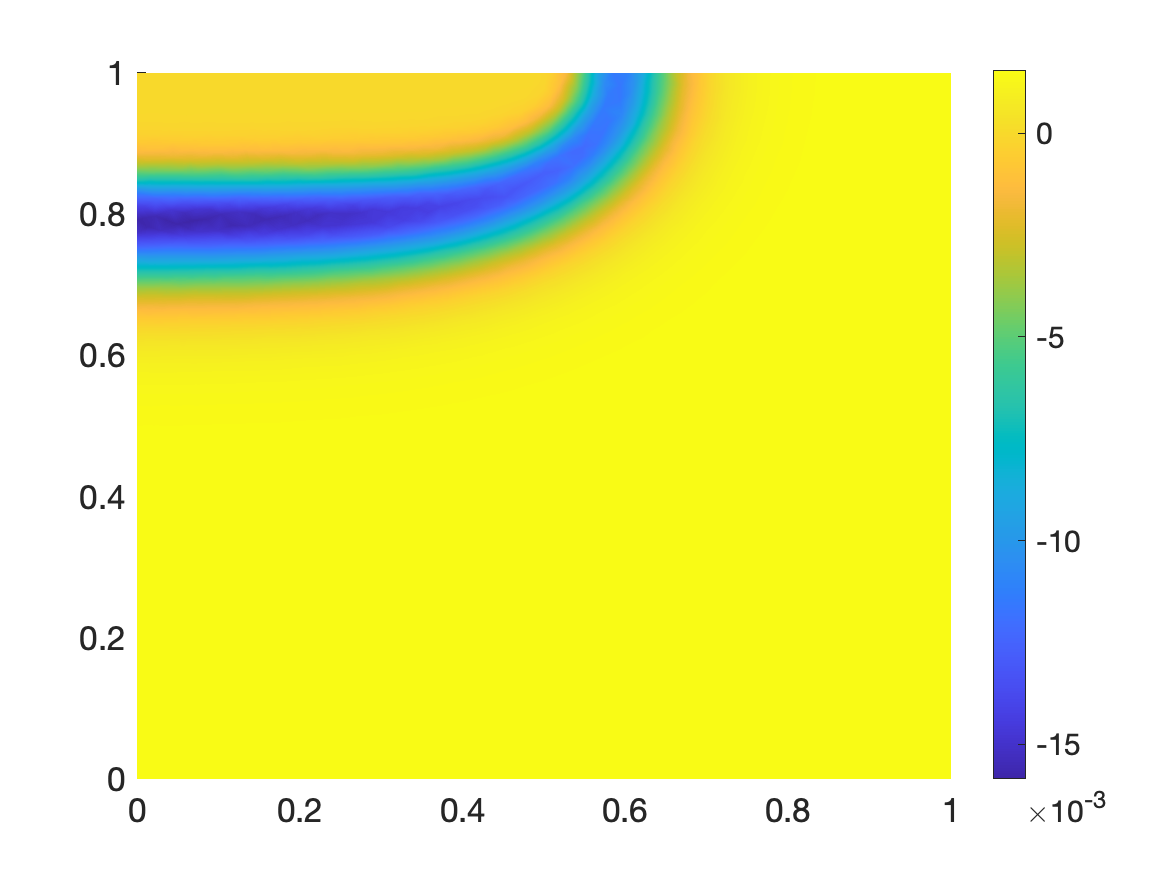}}
\draw[color = white, fill = white] (-2,2) -- (2,2) -- (2,2.5) -- (-2,2.5) -- (-2,2);
\end{tikzpicture} 
\\
$u_3(T) - u_3^\infty, \; z_3 = -1$ &
$u_0(T) - u_0^\infty, \; z_0 = 0$
\end{tabular}
\caption{Difference between the final concentration profiles of Figure~\ref{fig:u3_final} and the stationary ones, cf.  Figure~\ref{fig:u3stat}\eqref{eq:num.u0.3} for $\lambda^2 = 0.01$.}\label{fig:u3error}
\end{figure}

\medskip

\noindent
\textbf{Acknowledgement.} CC and MH acknowledge support from the LabEx CEMPI (ANR-11-LABX-0007) and the ministries of Europe and Foreign Affairs (MEAE) and Higher Education, Research and Innovation (MESRI) through PHC Amadeus 46397PA. AM acknowledges support from the multilateral project of the Austrian Agency for International Co-operation in Education and Research (OeAD), grant FR 01/2021, as well as partial support from the Austrian Science Fund (FWF), grant DOI 10.55776/P33010
and 10.55776/F65. This work has received funding from the European Research Council (ERC) under the European Union’s Horizon 2020 research and innovation programme, ERC Advanced Grant NEUROMORPH, no. 101018153. CC also acknowledges support from the European Union's Horizon 2020 research 
and innovation programme under grant agreement No 847593 (EURAD program, WP DONUT) and from the COMODO (ANR-19-CE46-0002) project.

\end{document}